\newtheorem{theorem}{Theorem}[section]
\newtheorem{fact}[theorem]{Fact}
\newtheorem{prop}[theorem]{Proposition}
\newtheorem{convention}[theorem]{Convention}
\newtheorem{lemma}[theorem]{Lemma}
\newtheorem{claim}[theorem]{Claim}
\newtheorem*{claim*}{Claim}
\theoremstyle{definition}
\newtheorem{definition}[theorem]{Definition}
\newtheorem{remark}[theorem]{Remark}
\newcommand{\mrm}[1]{\mathrm{#1}}
\newcommand{\Sym}{\mrm{Sym}}
\newcommand{\NN}{{\mathbb{N}}}
\newcommand{\sub}{\subseteq}
\newcommand{\bi}{\begin{itemize}}
\newcommand{\ei}{\end{itemize}}
\newcommand{\bc}{\begin{center}}
\newcommand{\ec}{\end{center}}
\newcommand{\ES}{\emptyset}
\newcommand{\la}{\langle}
\newcommand{\ra}{\rangle}
\newcommand{\n}{\noindent}
\newcommand{\sss}{\sigma}
\newcommand{\lland}{\, \land \, }
\newcommand\+[1]{\mathcal{#1}}
\newcommand{\ol}{\overline}
\newcommand{\lra}{\leftrightarrow}
\newcommand{\LR}{\Leftrightarrow}
\DeclareMathOperator \Th{Th}
\DeclareMathOperator \Gr{\mathbf{G}}
  \DeclareMathOperator{\Aut}{Aut}
    \DeclareMathOperator{\Inn}{Inn}
        \DeclareMathOperator{\Int}{Int}
      \DeclareMathOperator{\Out}{Out}
\renewcommand{\S}{S_\infty}
\renewcommand{\hat}{\widehat}
\newcommand{\G}{\mathcal{G}}
\newcommand{\LC}{\mathrm{LC}} 
\newcommand{\RC}{\mathrm{RC}}
\newcommand{\idempotent}{{idempotent}} 
\newcommand{\idempotents}{{idempotents}}
\newcommand{\id}{\mathrm{id}}
\newenvironment{enumerate-(a)}{\begin{enumerate}[label={\upshape (\alph*)}, leftmargin=2pc]}{\end{enumerate}}
\newenvironment{enumerate-(a)-r}{\begin{enumerate}[label={\upshape (\alph*)}, leftmargin=2pc,resume]}{\end{enumerate}}
\newenvironment{enumerate-(A)}{\begin{enumerate}[label={\upshape (\Alph*)}, leftmargin=2pc]}{\end{enumerate}}
\newenvironment{enumerate-(A)-r}{\begin{enumerate}[label={\upshape (\Alph*)}, leftmargin=2pc,resume]}{\end{enumerate}}
\newenvironment{enumerate-(i)}{\begin{enumerate}[label={\upshape (\roman*)}, leftmargin=2pc]}{\end{enumerate}}
\newenvironment{enumerate-(i)-r}{\begin{enumerate}[label={\upshape (\roman*)}, leftmargin=2pc,resume]}{\end{enumerate}}
\newenvironment{enumerate-(I)}{\begin{enumerate}[label={\upshape (\Roman*)}, leftmargin=2pc]}{\end{enumerate}}
\newenvironment{enumerate-(I)-r}{\begin{enumerate}[label={\upshape (\Roman*)}, leftmargin=2pc,resume]}{\end{enumerate}}
\newenvironment{enumerate-(1)}{\begin{enumerate}[label={\upshape (\arabic*)}, leftmargin=2pc]}{\end{enumerate}}
\newenvironment{enumerate-(1)-r}{\begin{enumerate}[label={\upshape (\arabic*)}, leftmargin=2pc,resume]}{\end{enumerate}}
\begin{document}

 \title {Automorphism groups of  non-Archimedean  groups}
 
\author{Andr\'e Nies}
\author{Philipp Schlicht}

\thanks{The first and second author were partially supported by the Marsden fund of New Zealand,   19-UOA-1931. 
The second listed author gratefully acknowledges the support of INdAM-GNSAGA. 
}

\noindent 
\address{A.\  Nies, School of Computer Science,  The University of Auckland, Private Bag 92019, Auckland 1142, New Zealand. \newline  \texttt{andre@cs.auckland.ac.nz}}

\address{P.\ Schlicht, Dipartimento di ingegneria dell'informazione e scienze matematiche, Università di Siena, via Roma 56, 53100 Siena, Italia. \newline  \texttt{philipp.schlicht@unisi.it}}

\maketitle

\begin{abstract} 
Let $\Aut(G)$ denote the group of (bi-)continuous automorphisms 
of a non-Archimedean  Polish group~$G$. 
We show that for any such $G$ with an invariant countable basis of open subgroups, the group  $\Aut(G)$ carries a unique Polish topology that makes its natural action on $G$ continuous. 
Furthermore, for any class of groups allowing a Borel assignment of such bases, there is a functorial duality to a class of countable groupoids with a meet operation,  extending 
 work of the authors with Tent  (Coarse groups, and the isomorphism problem for oligomorphic
 groups, Journal of Mathematical Logic, 2021).
This provides an alternative description of the topology of $\Aut(G)$. 
The results hold for instance for the class of locally Roelcke precompact non-Archimedean groups, which contains most classes studied previously. 
We further provide a model-theoretic proof that the outer automorphism group $\Out(G)$ of an oligomorphic group $G$ is locally compact, a result due to Paolini and the first author (arXiv:2410.02248). 
\end{abstract} 

\tableofcontents

\section{Introduction}

A Polish group $G$
is called \emph{non-Archimedean  (nA)}  if $G$ has a countable basis  $\+ S_G = \{ U_n \colon n \in \omega\}$ of neighbourhoods  of the identity that consists of open subgroups.   
Such a  group $G$  is topologically isomorphic to a closed subgroup of $\S$, the group of permutations of $\NN$ with the topology of pointwise convergence. (To~see this, one  lets $G$ act from the left on the  set of left cosets of subgroups in $\+ S_G$.)  Conversely, each closed subgroup of $\S$ is nA, via taking as  $U_n$ the permutations in $G$ that fix each $i< n$. One verifies that the closed subgroups of $\S$ are precisely  the automorphism groups of structures with domain $\omega$.  The class of nA groups enjoys several permanence  properties: closed subgroups, quotients by closed normal subgroups, and countable Cartesian products of nA groups are again nA. 

Our paper  focusses on   two groups derived from a nA group $G$:  $\Aut(G)$,  the group of  \textit{continuous} automorphisms of~$G$, and   $\Out(G)$, the  quotient of $\Aut(G)$ by  its  normal subgroup of inner automorphisms.     One says that a closed subgroup $G$ of $\S$ is    \textit{oligomorphic} if for each~$n\in \omega$, its action on $\omega^n$ has only finitely many orbits. We will in particular address  $\Aut(G)$ and $\Out(G)$ for oligomorphic $G$. 

The paper   has three interrelated parts. 

\begin{enumerate} 
	\item[(A)] The natural   action of $\Aut(G)$ on $G$ is given by $\alpha \cdot g  = \alpha(g)$ for $\alpha \in \Aut(G)$ and $g \in G$. We provide  a sufficient criterion    when $\Aut(G)$   has a compatible Polish  topology that makes this  action   continuous:  \textit{$G$  has a countable neighbourhood basis   $\+ S_G$ of the neutral element   consisting  of open subgroups   that can be chosen invariant under this action}. In this case, $\Aut(G)$ is   nA itself. 
	\item[(B)] The   class of oligomorphic groups satisfies the criterion; also, $\Inn(G)$ is closed in $\Aut(G)$. So $\Out(G)$ is a Polish group. We study it  through the model-theoretic notion of bi-interpretations, and use this to give  a model-theoretic  proof of the result in~\cite{Nies.Paolini:24} that $\Out(G)$ is totally disconnected and  locally compact. 
	\item[(C)]  Given  a Borel class $\mathbf G$ of nA groups that satisfies  a uniform version   of the criterion in (A) due to Kechris et al.\ \cite{Kechris.Nies.etal:18}, we provide a Borel equivalence of the category $\mathbf G$ with isomorphism, and a category of countable structures with domain the coset of subgroups in $\+ S_G$. 
	This provides an alternative way to  obtain the    Polish topology on $\Aut(G)$ making  its action on $G$ continuous. 
\end{enumerate}    

 The study of  $\Out(G)$ is motivated in part  by the question whether the isomorphism relation between  oligomorphic groups is smooth in the sense of Borel reducibility. 
An upper bound on this relation is known by~\cite{Nies.Schlicht.etal:21}: it is essentially countable. However the precise complexity, first asked in \cite{Kechris.Nies.etal:18}, remains unknown.      Certain subclasses are known to be smooth~\cite{Nies.Paolini:24}, such as the automorphism groups of $\omega$-cateogrical structures    without algebraicity. 

In contrast,    the conjugacy relation on the  Borel  space of oligomorphic groups is smooth~\cite{Nies.Schlicht.etal:21}. 
Towards answering the question, it is thus useful to know whether an isomorphism between two permutation groups $G$ and $H$ is induced by a conjugation with a permutation of $\NN$. 
For a single permutation group $G$, $\Out(G)$ is trivial iff  every continuous automorphism of $G$ is given  by conjugating with a permutation of $\NN$ in $G$.

We discuss the   parts (A)-(C)   in some detail, with proofs delegated to  the main body of the paper. First we state the    definition of Roelcke precompact Polish groups   in  the nA case.
\begin{definition} \label{def:RP}
	A non-Archimedean group $G$ is \textit{Roelcke precompact} if each open subgroup has only finitely many double cosets. 
\end{definition}
\subsection*{(A) When  is $\Aut(G)$   non-Archimedean?}   
 
  The main  result  for this part will be   summarised here; the detailed version is  \cref{th:1}.

 \begin{theorem}   \label{th:1sum}
 Suppose a nA group satisfies the criterion in ({\rm A}) above.  
Then there is a copy $\hat G$ of    $G$ as a closed subgroup of $\S$ such that  the group $\Aut(G)$ is     isomorphic   to the Polish group that is given as the normaliser of $\hat G$ in $\S$ by its centraliser in $\S$.   The induced topology on $\Aut(G)$ is the unique Polish topology that makes the action on $G$ continuous. 
 \end{theorem}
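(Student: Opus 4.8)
The plan is to make the standard realisation of a non-Archimedean group as a permutation group fully explicit, and then read off both the normaliser--centraliser description and the topology from it. Fix an $\Aut(G)$-invariant countable basis $\+ S_G=\{U_n:n\in\w\}$ of open subgroups as granted by the criterion, so that $\alpha(U_n)=U_n$ for every $\alpha\in\Aut(G)$ and every $n$. Set $\Omega=\bigsqcup_{n\in\w}G/U_n$; this is a countable set since each $U_n$ is open, and left translation gives a homomorphism $G\to\Sym(\Omega)\cong\S$. It is faithful because $\bigcap_nU_n=\{e\}$, and since the point stabiliser of the coset $eU_n$ is exactly $U_n$, the subspace topology on the image matches that of $G$; as $G$ is Polish this realises $G$ as a closed subgroup $\hat G\le\S$, with $g\mapsto\hat g$ a topological isomorphism $G\to\hat G$.

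Next I would build the map to the quotient. Invariance gives, for each $\alpha\in\Aut(G)$, a well-defined permutation $\hat\alpha\in\S$ by $\hat\alpha(gU_n)=\alpha(g)U_n$, and a direct computation yields $\hat\alpha\,\hat g\,\hat\alpha^{-1}=\widehat{\alpha(g)}$ for all $g\in G$; hence $\hat\alpha$ normalises $\hat G$ and conjugation by $\hat\alpha$ induces $\alpha$ on $\hat G$. Conversely, conjugation by any $\beta\in N_{\S}(\hat G)$ is continuous on $\S$, so restricts to a continuous automorphism of $\hat G$, and this defines a homomorphism $\Psi\colon N_{\S}(\hat G)\to\Aut(\hat G)$, where $\Aut(\hat G)$ is identified with $\Aut(G)$ via $g\mapsto\hat g$. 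The computation above shows $\Psi$ is surjective (each $\alpha$ is $\Psi(\hat\alpha)$), and its kernel is by definition $C_{\S}(\hat G)$; thus $\Psi$ descends to a group isomorphism $\bar\Psi\colon N_{\S}(\hat G)/C_{\S}(\hat G)\to\Aut(G)$.

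To install the topology, I would note that $N_{\S}(\hat G)$ is closed in $\S$: for each fixed $\hat g$ the map $\beta\mapsto\beta\hat g\beta^{-1}$ is continuous and $\hat G$ is closed, so $N_{\S}(\hat G)=\{\beta:\beta\hat G\beta^{-1}\subseteq\hat G\}\cap\{\beta:\beta^{-1}\hat G\beta\subseteq\hat G\}$ is an intersection of closed sets. Hence $N_{\S}(\hat G)$ is Polish, $C_{\S}(\hat G)$ is a closed normal subgroup, and $N_{\S}(\hat G)/C_{\S}(\hat G)$ is Polish; I transport this topology to $\Aut(G)$ via $\bar\Psi$. For continuity of the natural action, the conjugation action $N_{\S}(\hat G)\times\hat G\to\hat G$ is continuous and trivial on $C_{\S}(\hat G)$; since the quotient map $q$ is open, $q\times\id$ is a quotient map and the action descends continuously to $(N_{\S}(\hat G)/C_{\S}(\hat G))\times\hat G\to\hat G$, which under $\bar\Psi$ and $g\mapsto\hat g$ is precisely the action $\Aut(G)\times G\to G$.

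Finally, for uniqueness, let $\tau$ be any Polish group topology on $\Aut(G)$ making the action continuous. For fixed $g$ the evaluation $\alpha\mapsto\alpha(g)$ is $\tau$-continuous, and each coset $hU_n$ is open in $G$, so every set $\{\alpha:\alpha(g)\in hU_n\}=\{\alpha:\hat\alpha(gU_n)=hU_n\}$ is $\tau$-open. As these conditions are exactly the coordinates of the map $\alpha\mapsto\hat\alpha$ into $\S$, that map is $\tau$-continuous, and it takes values in $N_{\S}(\hat G)$; composing with the continuous map $q$ shows $\bar\Psi^{-1}\colon(\Aut(G),\tau)\to N_{\S}(\hat G)/C_{\S}(\hat G)$ is continuous, i.e.\ the transported topology is coarser than $\tau$. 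Since both are Polish group topologies and the identity between them is a continuous bijective homomorphism, the open mapping theorem forces equality. I expect the main obstacle to be the isomorphism $\bar\Psi$, specifically the surjectivity of $\Psi$ --- the point where invariance of the basis is indispensable, since it is what lets an abstract continuous automorphism be realised by a single permutation of $\Omega$; the uniqueness then rests on the clean observation that action-continuity already makes $\alpha\mapsto\hat\alpha$ continuous into $\S$, so that one never needs the image $\{\hat\alpha:\alpha\in\Aut(G)\}$ to be closed.
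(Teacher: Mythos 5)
Your argument rests on a misreading of the invariance hypothesis, and the misreading does real work in both halves of your proof. The criterion in (A) asserts that the \emph{family} $\+ S_G$ is invariant under $\Aut(G)$, i.e.\ $\alpha(U)\in\+ S_G$ for every $U\in\+ S_G$ and $\alpha\in\Aut(G)$; it does \emph{not} assert $\alpha(U_n)=U_n$ for each $n$, as you assume at the outset. Your reading would make every $U_n$ invariant under all inner automorphisms, hence normal in $G$, so your theorem would apply only to groups with a neighbourhood basis of open \emph{normal} subgroups; this excludes $\S$ itself and the paper's target classes (oligomorphic and locally Roelcke precompact groups, where the family $\+ S_G$ is merely permuted by automorphisms). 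The first casualty is your map $\hat\alpha(gU_n)=\alpha(g)U_n$: under the correct hypothesis it is not well defined, since $gU_n=g'U_n$ only gives $\alpha(g)^{-1}\alpha(g')\in\alpha(U_n)$, not membership in $U_n$. This half is repairable: define $\hat\alpha(gU_n)=\alpha(g)\alpha(U_n)$, acting on the set of \emph{all} left cosets of \emph{all} subgroups in $\+ S_G$ (this is the paper's map $\Delta$ in the proof of \cref{th:1}(i)); then your conjugation identity, the surjectivity of $\Psi$, the kernel computation, and the transport of the Polish quotient topology from $N_{\S}(\hat G)/C_{\S}(\hat G)$ go through essentially as written.

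The uniqueness half, however, breaks in a way that is not cosmetic. With the corrected $\hat\alpha$, a subbasic coordinate condition for $\alpha\mapsto\hat\alpha$ reads ``$\alpha(U_n)=U_m$ and $\alpha(g)\in hU_m$'', and continuity of the action in an arbitrary Polish group topology $\tau$ does \emph{not} make $\{\alpha:\alpha(U_n)=U_m\}$ open: all one gets is that it is $G_\delta$, by writing it as $\bigcap_{g\in D_n}\{\alpha:\alpha(g)\in U_m\}\cap\bigcap_{h\in D_m}\{\alpha:\alpha^{-1}(h)\in U_n\}$ for countable dense sets $D_n\sub U_n$ and $D_m\sub U_m$. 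So your key sentence ``that map is $\tau$-continuous'' is unjustified; indeed, openness of these stabiliser sets in $\tau$ is essentially the content of the uniqueness assertion, so assuming it is circular. The paper closes exactly this gap in \cref{prop:coarsest}: the sets $\{\Phi:\Phi(A)=A\}$ are shown to be $G_\delta$, hence Borel, hence Baire measurable for $\tau$, and then the Pettis-type theorem (every Baire-measurable homomorphism from a Baire group to a separable group is continuous) gives continuity of the identity from $(\Aut(G),\tau)$ to $\Aut(G)$ with the transported topology; only after that does the open-mapping/minimality argument you invoke apply. Any correct completion of your proof needs this Baire-category input (or an equivalent, e.g.\ that a countable-index subgroup with the Baire property in a Polish group is open), which is absent from your proposal.
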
 
The criterion implies that  $\Inn(G)$ is a Borel subgroup of $\Aut(G)$ that is Polishable (\cref{prop:Polishable}).
As an application of the criterion, $\Aut(G)$  is itself  nA for several natural classes of nA groups. This includes the  locally Roelcke precompact groups, where the neighbourhood basis in (A) consists of the Roelcke precompact open subgroups of~\cref{def:RP}. We note that if $G$ is Roelcke precompact, then $\Inn(G)$ is in fact a closed   subgroup  of $\Aut(G)$ \cite[Th.\  2.7]{Nies.Paolini:24}. 


\subsection*{(B) Oligomorphic groups and bi-interpretations} 

Each oligomorphic group is  Roelcke precompact by  \cite[Th.\ 2.4]{Tsankov:12}.  If $G$ is oligomorphic,   the outer automorphism group $\Out(G)= \Aut(G)/\Inn(G)$ is t.d.l.c.\ by~\cite[Th.\ 3.10]{Nies.Paolini:24}. We wish to describe $\Out(G)$ based on  the theory of any $\omega$-categorical structure that has   $G$ as its automorphism group. 
For an $\omega$-categorical theory $T$, let $\+ B(T)$ be the group of self-interpretations of $T$ that have an inverse, all up to definable bijections between sorts of $T^\mathrm{{eq}}$  (for formal detail see \cref{def: BT}).  The following reproves the result   that $\Out(G)$ is t.d.l.c.\ using model theory, and gives a model theoretic  description of $\Out(G)$ based only on the theory of the underlying structure. 
\begin{theorem} \label{th:BI Out}
	Let $G$ be an  oligomorphic group, and let $T$ be the elementary theory of a    structure $M$ such that    $G= \Aut(M)$.  
\begin{itemize} \item[(i)]  The group $\+ B(T)$ is totally disconnected, locally compact. \item[(ii)] $\+ B(T)$ is    topologically isomorphic to $\Out(G)$. \end{itemize}
\end{theorem}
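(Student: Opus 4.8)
The plan is to identify $\+B(T)$ with $\Out(G)$ via the Ahlbrandt--Ziegler correspondence between interpretations and continuous homomorphisms of automorphism groups developed in \cite{Nies.Schlicht.etal:21}, and then to read off the topological statement (i) from the model-theoretic side. Fix an $\omega$-categorical $M$ with $\Aut(M)=G$ and work in $M^{\mathrm{eq}}$. First I would check that a self-interpretation $\Gamma$ of $M$ yields a $\emptyset$-definable copy $\Gamma(M)\subseteq M^{\mathrm{eq}}$ of $M$ on which every $g\in G$ acts, so that transporting this action through any isomorphism $\iota\colon\Gamma(M)\to M$ gives a continuous endomorphism $g\mapsto\iota\circ(g|_{\Gamma(M)})\circ\iota^{-1}$ of $G$, which I denote $\hat\Gamma$. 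Replacing $\iota$ by $\sigma\iota$ for $\sigma\in G$ replaces $\hat\Gamma$ by $c_\sigma\circ\hat\Gamma$, so the homotopy class of $\Gamma$ pins down $\hat\Gamma$ only up to an inner automorphism, i.e.\ pins down a well-defined element of $\Out(G)$. When $\Gamma$ is a bi-interpretation, $\widehat{\Gamma^{-1}}$ is an inverse in $\Out(G)$, so $\hat\Gamma$ is a topological automorphism; conversely the reconstruction theorem of Ahlbrandt--Ziegler shows this assignment is injective on homotopy classes and hits every topological automorphism of the oligomorphic group $G$. Since $c_{\sigma_1}\hat\Gamma_1\circ c_{\sigma_2}\hat\Gamma_2=c_{\sigma'}(\hat\Gamma_1\circ\hat\Gamma_2)$ for a suitable $\sigma'\in G$, the assignment descends to a group isomorphism $\+B(T)\to\Out(G)$ (adjusting for the variance of the interpretation functor); this is (ii) at the level of abstract groups.

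Next I would install the topologies. By \cref{th:1sum}, $\Aut(G)$ carries its unique Polish topology, realised as $N_\S(\hat G)/C_\S(\hat G)$ for a copy $\hat G\le\S$. As $G$ is oligomorphic it is Roelcke precompact \cite[Th.~2.4]{Tsankov:12}, so $\Inn(G)$ is closed in $\Aut(G)$ \cite[Th.~2.7]{Nies.Paolini:24}; hence $\Out(G)=\Aut(G)/\Inn(G)$ is Polish with the quotient topology, and concretely $\Out(G)\cong N_\S(\hat G)/(\hat G\,C_\S(\hat G))$. Because $\Aut(G)$ is non-Archimedean and the quotient map $q$ is open, the images $q(U)$ of the finite-support stabilisers $U\le\Aut(G)$ form a neighbourhood basis of open subgroups at the identity, so $\Out(G)$ is non-Archimedean and in particular totally disconnected. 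I would then give $\+B(T)$ the topology whose basic identity neighbourhoods are the subgroups of bi-interpretations homotopic to the identity on all imaginary sorts up to a fixed level, and verify that the isomorphism of the previous paragraph carries these to the subgroups $q(U)$, so that it is a homeomorphism.

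The remaining and hardest point is local compactness, where oligomorphy enters essentially. The plan is to exhibit a compact open subgroup of $\+B(T)$. On the model-theoretic side, Ryll--Nardzewski guarantees that for each fixed format---the arity of the defining domain together with the complexity of the defining formulas---there are only finitely many self-interpretations of $M$ up to $\emptyset$-definable bijection. I would take $K\le\+B(T)$ to be the subgroup of bi-interpretations fixing the home sort up to $\emptyset$-definable bijection, and argue that $K$ is the inverse limit, over the increasing imaginary sorts, of these finite quotients, hence profinite and compact, while being open by the previous paragraph. Transporting $K$ through the isomorphism then produces a compact open subgroup of $\Out(G)$, yielding a model-theoretic proof that $\Out(G)$ is totally disconnected and locally compact and reproving \cite[Th.~3.10]{Nies.Paolini:24}. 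I expect the main obstacle to be precisely the verification that these bi-interpretations form a subgroup which is at once open and profinite: one must check that the successive finite Ryll--Nardzewski approximations assemble into a genuine projective system rather than a discrete union, and that the model-theoretic neighbourhood basis on $\+B(T)$ matches the quotient topology on $\Out(G)$ so that compactness transfers across the isomorphism.
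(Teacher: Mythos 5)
Your proposal follows essentially the same route as the paper: your first paragraph is the Ahlbrandt--Ziegler correspondence $\+ B(M)\cong\Aut(G)$ (\cref{lem: top iso}) together with the observation that passing to theory-level interpretations quotients exactly by $\Inn(G)$ (\cref{lem: same Th,lem: triv image,lem:ints theories}), and your last paragraph extracts local compactness from Ryll--Nardzewski, as does \cref{lem: wawa}. The differences are packaging: you transport the topology from $\Out(G)$ (built via closedness of $\Inn(G)$, citing \cite{Nies.Paolini:24}), whereas the paper equips $\+ B(T)$ with an intrinsic syntactic topology and transports it the other way, which keeps the proof self-contained on the model-theoretic side.

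However, the step you yourself single out as hardest contains a genuine error as stated. The claim that for each fixed format there are ``only finitely many self-interpretations of $M$ up to $\emptyset$-definable bijection'' is false whenever the signature of $M$ is infinite, and that is the only case that matters: a general oligomorphic $G$ is only guaranteed to equal $\Aut(M)$ for $M$ in an infinite relational language (one symbol per orbit), and when the signature is finite, $\+ B(T)$ is outright discrete (\cref{lem: wawa}(iv)), so local compactness is trivial there. Ryll--Nardzewski gives finitely many choices of interpreting formula \emph{per relation symbol}; with infinitely many symbols, the interpretations based on a fixed sort form a compact (profinite) space, not a finite set, and the same is true of its homotopy quotient. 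The paper resolves precisely your ``assembly'' problem by realizing $\mathrm{Int}(T,T)$ as the path space of a tree $B_{T,T}$ that branches infinitely only at the root (the choice of sort) and finitely at every later level (one level per relation symbol): the paths through a fixed sort $c$ form a compact set $[c]$, its image under the continuous open quotient map is the compact open set $[c]_\approx$, and $\+ B(T)$, realized as the closed set of pairs $(\alpha,\beta)$ with $\alpha\beta\approx\beta\alpha\approx 1_T$ in $\mathrm{Mor}(T,T)\times\mathrm{Mor}(T,T)$, inherits local compactness with no projective-limit bookkeeping at all. Two further checks your sketch leaves open: that your $K$ is closed under inverses (the inverse of a bi-interpretation based on the home sort must again be homotopic to one based on the home sort, which needs an argument in the spirit of \cref{lem: same Th}), and that the group topology on the invertible elements of the monoid $\mathrm{Mor}(T,T)$---which must be defined via pairs, since inversion is not automatically continuous in the subspace topology (\cref{rem:Pol Monoid})---agrees with the topology you transport from $\Out(G)$.
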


\begin{remark} 
	Outer automorphism groups were introduced to  study  finite groups.   
	For countable groups, outer automorphism groups are important in the study of mapping class groups of surfaces.  Dehn, Nielsen and Baer showed that the extended mapping class group $\mathrm{MCG}^+(S)$ of a compact closed orientable surface $S$ of genus $g\geq 1$ is isomorphic to $\Out(\pi(S))$. Here  $\mathrm{MCG}^+(S)$ denotes the quotient of the  group of  orientation preserving homeomorphisms of $S$ by the connected component of the identity,  and $\pi(S)$ is the fundamental group \cite[Theorem 8.1]{Farb.Margalit:11}. 
	\end{remark}

\subsection*{(C) Borel equivalence of  categories $\Gr$ and $\+ M$} The closed subgroups of $\S$ form a standard Borel space $\+ U(\S)$, which is a subspace of the usual Effros space  $\+ F(\S)$ of closed subsets of $\S$.  Kechris et al.\ \cite{Kechris.Nies.etal:18}  studied Borel subclasses  $\Gr$   of $\+ U(\S)$  of groups $G$ for which the assignment of $\+ S_G$   to $G$  is Borel and   isomorphism  invariant. To   assign $\+ S_G$ to $G$ in a Borel way means that the relation \bc $\{(G,U)\colon G\in \mathbf {Gr} \land U \in \+N_G\}$ \ec is Borel.  The invariance condition means that  if $f\colon G \to H$ is topological  isomorphism then $U \in \+N_G \lra f(U) \in \+ S_H$.  In particular, $\+ S_G$ is closed under the action of $\Aut(G)$. Kechris et al.\ \cite{Kechris.Nies.etal:18} showed that the  topological isomorphism relation on such a class $\Gr$  is classifiable by countable structures.   Equivalence of categories was introduced by  Eilenberg and MacLane, and will be  recalled in \cref{def: equivalence categories}. We  establish an equivalence of
\begin{itemize}
	\item  [(1)] the category   that has as  objects the groups in such a class, and as morphisms their   topological isomorphisms, 
\item  [(2)] the category that has as  obects a certain Borel  class of countable  structures for a finite signature, and  as morphisms their  isomorphisms.   \end{itemize}
 The   functors needed to establish this equivalence will be   Borel.

\section{Non-Archimedean topology on $\Aut(G)$}

Let $H$ be a closed subgroup of $\S$ (denoted $H \le_c \S$).  By $N_{\S}(H)$ we denote the normaliser,  and by $C_{\S}(H)$ the centraliser,  of $H$ in $\S$. Note that both are closed subgroups of $\S$.  
\begin{theorem}[Full version of \cref{th:1sum}] \label{th:1} {\rm 
	Suppose $G$ is a Polish 
	 group with a countable neighbourhood  basis $\+ S_G$  of the neutral element   consisting of open subgroups, such that $\+ S_G$    is   invariant under the action of $\Aut(G) $ on the open subgroups. 
	
	\begin{enumerate}\item[(i)]  There is a group $\hat G \le_c \S$ and a topological isomorphism $\Theta \colon G \to \hat G$ such that $$N_{\S}(\hat G) / C_{\S}(\hat G)\cong \Aut(G) $$  via sending an $\alpha \in N_{\S}(\hat G)$ to its conjugation action on $G$. Using this   isomorphism, $\Aut(G)$   can be topologised as a    non-Archimedean group  in such a way that its  action on $G$ is continuous.
		\item[(ii)]   A  neighbourhood basis of the identity for this topology on $\Aut(G)$  is given by   the 
		subgroups   of the form \begin{equation} \label{eqn: basis ident} \{\Phi  \in \Aut(G)\colon\bigwedge_{i =1}^n   \Phi(A_i)= A_i\}, \tag{$*$}\end{equation}  where $A_1, \ldots, A_n $ are cosets of subgroups  in $ \+ S_G$.    
		
		\item[(iii)] This  topology on $\Aut(G)$ is the unique Polish topology  that makes    the   action  of $\Aut(G)$ on $G$   continuous.
	\end{enumerate}  }
\end{theorem}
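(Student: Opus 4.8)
The plan is to realise $\Aut(G)$ concretely as a closed subgroup of $\S$ acting on a space of cosets, and then to match that permutation topology with the quotient $N_\S(\hat G)/C_\S(\hat G)$. For (i), let $X=\bigsqcup_{U\in\+ S_G}G/U$ be the countable set of left cosets of the subgroups in $\+ S_G$. As noted in the introduction, the left-translation action yields a topological isomorphism $\Theta\colon G\to\hat G$ onto a closed subgroup $\hat G\le_c\S=\Sym(X)$; faithfulness uses $\bigcap\+ S_G=\{e\}$. Because $\+ S_G$ is $\Aut(G)$-invariant, every $\Phi\in\Aut(G)$ permutes the cosets by $s(\Phi)(gU):=\Phi(g)\Phi(U)$, giving a homomorphism $s\colon\Aut(G)\to\S$. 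A direct computation gives $s(\Phi)\,\Theta(g)\,s(\Phi)^{-1}=\Theta(\Phi(g))$, so $s(\Phi)\in N:=N_\S(\hat G)$ and conjugation by $s(\Phi)$ induces $\Phi$. Transporting conjugation through $\Theta$ defines a continuous homomorphism $c\colon N\to\Aut(G)$ with kernel $C:=C_\S(\hat G)$; since $c\circ s=\id$ it is surjective, so $\bar c\colon N/C\xrightarrow{\sim}\Aut(G)$. As $N,C$ are closed and $C\trianglelefteq N$, the quotient $N/C$ is a Polish non-Archimedean group by the permanence properties recalled in the introduction, and I topologise $\Aut(G)$ by transport along $\bar c$. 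The action is continuous because conjugation $N\times\hat G\to\hat G$ is continuous and constant on $C$-cosets in the first variable, and $N\to N/C$ is open, so it descends to a continuous map $(N/C)\times G\to G$, namely the action.

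For (ii) I identify the image of $s$. Let $B=\{\,U:U\in\+ S_G\,\}\subseteq X$ be the set of trivial cosets. I will verify $s(\Aut G)=N\cap\{\phi:\phi(B)=B\}$: a $\phi\in N$ acts on each orbit $G/U$ as $c(\phi)$ followed by a right translation, and that translation is forced to be trivial precisely when $\phi$ maps trivial cosets to trivial cosets. Since $\{\phi:\phi(b)\in B\}$ is clopen in $\S$ for each $b\in X$, the set $\{\phi:\phi(B)=B\}$ is closed, so $s(\Aut G)$ is a closed, hence Polish, subgroup of $\S$. Now $c$ restricted to $s(\Aut G)$ is a continuous bijective homomorphism onto $\Aut(G)$ between Polish groups, hence a topological isomorphism by the open mapping theorem for Polish groups; thus $s$ is a topological embedding and the topology of (i) is the subspace topology it induces. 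Pulling back the standard neighbourhood basis of $\S$ by pointwise stabilisers $\mathrm{Stab}(F)$ of finite $F\subseteq X$, and noting $s(\Phi)(A)=\Phi(A)$, yields exactly the subgroups of the form $(*)$.

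For (iii), let $\tau$ be any Polish group topology making the action continuous; I show each set $(*)$ is $\tau$-open, so that $\tau$ refines the topology $\tau_0$ of (i), whence the two coincide by the open mapping theorem applied to the identity $(\Aut G,\tau)\to(\Aut G,\tau_0)$. As $(*)$ is a finite intersection it suffices to treat $W=\{\Phi:\Phi(U)=U\}\cap\{\Phi:\Phi(g)\in gU\}$. The second factor is $e_g^{-1}(gU)$ for the $\tau$-continuous evaluation $e_g\colon\Phi\mapsto\Phi(g)$ and $gU$ is open, so it is $\tau$-open. The first factor is $\tau$-closed: if $\Phi_i\to\Phi$ with $\Phi_i(U)=U$, separate continuity of the action gives $\Phi(U)\subseteq U$, and applying this to $\Phi_i^{-1}\to\Phi^{-1}$ (inversion is $\tau$-continuous) gives the reverse inclusion. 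It also has countable index, since $\Aut(G)$ permutes the countable set $\+ S_G$; and a closed subgroup of countable index in a Polish group is open by the Baire category theorem. Hence $(*)$ is $\tau$-open, proving (iii).

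The hard part will be the bookkeeping in (ii): showing that the permutations in $N$ fixing $B$ setwise are exactly the canonical actions $s(\Phi)$, i.e.\ controlling the right-translation ambiguity by which a general normalising permutation differs from $s(c(\phi))$. Pinning this down is what makes $s(\Aut G)$ closed and lets the open mapping theorem convert the quotient $N/C$ into the explicit permutation topology with neighbourhood basis $(*)$; by comparison the uniqueness in (iii) is routine once the stabiliser-openness argument is in place.
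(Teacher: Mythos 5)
Your proposal is correct. Part (i) is essentially the paper's own construction: your $\Theta$, $s$, $c$ are the paper's $\Theta$, $\Delta$, $\Gamma$, including the verification $s(\Phi)\Theta(g)s(\Phi)^{-1}=\Theta(\Phi(g))$ and the descent of the conjugation action through the open quotient map $N\to N/C$.

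Parts (ii) and (iii) take genuinely different routes. For (ii), the paper never identifies the image of the section $s$: it works directly in $N/C$, checking by a short computation that $\Gamma(\alpha)\in\+ A$ iff $\alpha\in C\,\+ U$, where $\+ U$ is the pointwise stabiliser in $N$ of the subgroups $U_i=A_i^{-1}A_i$ and the cosets $A_i$, and using that the sets $C\+ U/C$ form a neighbourhood basis of the quotient. You instead prove $s(\Aut G)=N\cap\{\phi:\phi(B)=B\}$, deduce that this is a closed subgroup of $\S$, and transfer the permutation topology via the open mapping theorem. This works, and the crux you flag is true with exactly the idea you sketch: if $\phi\in N$, $\Psi=c(\phi)$, and $\phi(U)=V\in B$, then comparing point stabilisers gives $\Psi(U)=V$, whence $\phi(gU)=\Psi(g)V=s(\Psi)(gU)$ on that orbit, so $\phi(B)=B$ forces $\phi=s(\Psi)$. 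One gloss needs repair: elements of $N$ and of $C$ need not preserve the orbits $G/U$ — they may permute orbits belonging to conjugate subgroups — so ``acts on each orbit as $c(\phi)$ followed by a right translation'' requires this bookkeeping. Your route buys the topological splitting $N=s(\Aut G)\ltimes C$ (which the paper records only as a remark), at the cost of the image-identification lemma plus an appeal to the open mapping theorem; the paper's double-coset computation is more elementary and self-contained. For (iii), the paper proves a stronger statement (\cref{prop:coarsest}): $\tau$ is contained in every \emph{Baire} group topology $\sss$ making the action continuous, via automatic continuity of Baire-measurable homomorphisms, showing each set $\{\Phi:\Phi(A)=A\}$ is $G_\delta$ for $\sss$ using a countable dense subset of $A$. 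Your argument — decompose $\{\Phi:\Phi(gU)=gU\}$ as $\{\Phi:\Phi(U)=U\}\cap e_g^{-1}(gU)$, note the second factor is open by continuity of evaluation, and the first is a $\tau$-closed subgroup of countable index (the orbit of $U$ in the countable set $\+ S_G$), hence open by Baire category — is correct (the decomposition is easily checked) and suffices for the Polish uniqueness claim, but since it uses metrizability and Baireness of $\tau$ it does not recover the paper's more general Baire-topology version.
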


 The proof uses  the basic idea  from the proof of Kechris et al.\ \cite[Th.\ 3.1]{Kechris.Nies.etal:18},     removing   references to Borelness and uniformity. We   use  $\+ S_G$ for the neighbourhood basis of $1$,  instead of the notation $\+ N_G$   there.  
\begin{proof}[Proof of  \cref{th:1}(i)]  Let   $\+ S_G^*$ denote the set of left  cosets of the subgroups in $\+ S_G$. Then $G$ acts from the left on $\+ S_G^*$. Since $\+ S_G^*$ is countably infinite, we can fix a bijection $\rho_G \colon \omega \to  \+ S_G^*$, so  the  left  action of an element  $g$ on $\+ S_G^*$ corresponds    to a permutation $\Theta(g) \in \S$.    We let $\hat G$ denote  subgroup of $\S$ that is the range of  $ \Theta$ with the  topology inherited from $\S$. 
\begin{claim}[\cite{Becker.Kechris:96}*{Th.\ 1.5.1}; see also \cite{Kechris.Nies.etal:18}*{Claim 3.2}] \label{cl:wow} \  \newline The map $\Theta \colon G \to \hat G $ is a topological group isomorphism.  In particular, since $G$ is Polish, the group  $\hat G$ is a  closed subgroup of $\S$.  \end{claim}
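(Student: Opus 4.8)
The plan is to verify the four standard ingredients of the claim in turn: that $\Theta$ is a homomorphism, that it is injective, that it is a homeomorphism onto its image, and finally that the image $\hat G$ is closed. First, $\Theta$ is a group homomorphism because it is simply the permutation representation of the left-multiplication action of $G$ on the coset space $\+ S_G^*$, transported along $\rho_G$ (so $\Theta(g) = \rho_G^{-1}\circ(g\cdot -)\circ\rho_G$); any action by permutations yields a homomorphism into the ambient symmetric group. For injectivity, suppose $\Theta(g) = \id$. Then $g$ fixes every coset, in particular $g\cdot U = U$, i.e.\ $g \in U$, for each subgroup $U \in \+ S_G$. Since $\+ S_G$ is a neighbourhood basis of $1$ and $G$ is Hausdorff, $\bigcap \+ S_G = \{1\}$, and so $g = 1$.

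The crux is to show that $\Theta$ is a homeomorphism onto $\hat G$, and here the key point is that the two topologies arise from matching neighbourhood bases. Recall that the topology of $\S$ is generated by the pointwise stabilisers $W_{n,m} = \{h : h(n)=m\}$. Writing $\rho_G(n) = aU$ and $\rho_G(m) = bV$, the preimage $\Theta^{-1}(W_{n,m})$ consists of those $g$ with $(ga)U = bV$; using that two left cosets coincide only if they are cosets of the same subgroup, this set is empty unless $U = V$, in which case it is the open set $b\,U a^{-1}$. Hence $\Theta$ is continuous. For openness I cannot invoke a general open-mapping theorem, since $\hat G$ is not yet known to be Polish; instead I argue directly. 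Because $\Theta$ is a homomorphism it suffices to send a neighbourhood basis of $1$ to relative neighbourhoods of $\id$: if $U \in \+ S_G$ and $k = \rho_G^{-1}(U)$, then $g \in U$ iff $g\cdot U = U$ iff $\Theta(g)(k)=k$, so $\Theta(U) = \hat G \cap W_{k,k}$ is relatively open. Thus $\+ S_G$ and $\{\hat G \cap W_{k,k}\}$ are matching bases at the identity, and $\Theta$ is a topological isomorphism onto $\hat G$.

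Finally, since $\Theta$ is a topological isomorphism and $G$ is Polish, $\hat G$ is a Polish subgroup of $\S$ in the subspace topology. The remaining point is the general fact that a Polish subgroup of a Polish group is closed: being Polish in the subspace topology, $\hat G$ is $G_\delta$ in its closure $\overline{\hat G}$ and dense there, whence a Baire-category argument (any two dense $G_\delta$ sets in a Polish space meet, applied to $\hat G$ and its translates) forces $\hat G = \overline{\hat G}$. I expect this last step --- invoking that a $G_\delta$, equivalently Polish, subgroup is automatically closed --- to be the one requiring the most care, since it is the only place where completeness of $G$ is genuinely used and where circularity must be avoided; the remaining verifications are purely formal once the correspondence between cosets and points of $\omega$ is in place.
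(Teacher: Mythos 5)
Your proof is correct, but there is nothing in the paper to compare it against step by step: the paper does not prove this claim at all, it simply imports it from the literature (Becker--Kechris, Th.\ 1.5.1; Kechris--Nies--Tent, Claim 3.2). What you have written is in effect a self-contained reconstruction of the standard argument behind those citations, and all four of your steps are sound: the homomorphism property is immediate from the permutation representation of the left-translation action on $\+ S_G^*$; injectivity correctly uses that $\bigcap \+ S_G = \{1\}$ in a Hausdorff group; continuity follows from your computation $\Theta^{-1}(W_{n,m}) = b U a^{-1}$ (the observation that a left coset determines its subgroup, so the preimage is empty unless $U = V$, is exactly the point that makes this clean); and openness onto the image follows from the matching identification $\Theta(U) = \hat G \cap W_{k,k}$ together with the fact that a bijective homomorphism taking a neighbourhood basis of $1$ to relative neighbourhoods of the identity is open onto its image. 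Your final step is also correct and is the only place where real care is needed, as you say: having established the topological isomorphism, $\hat G$ is Polish in the subspace topology, hence $G_\delta$ in $\S$ by Alexandrov's theorem, hence a dense $G_\delta$ in the Polish group $\overline{\hat G}$, and the translate argument ($x \hat G$ and $\hat G$ are both dense $G_\delta$ in $\overline{\hat G}$, so they meet, giving $x \in \hat G \cdot \hat G^{-1} = \hat G$) yields closedness; this is the standard fact that a Polish subgroup of a Polish group is closed. The only economy you could make is to cite that last fact directly rather than reprove it --- which is essentially what the paper does wholesale by citing Becker--Kechris.
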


	For $\alpha \in N_{\S}(\hat G)$ let $K_\alpha\in \Aut(\hat G)$ denote conjugation by $\alpha$, namely $K_\alpha(h)= \alpha \circ h \circ \alpha^{-1}$.
\begin{lemma}  A   retraction $\Gamma \colon N_{\S}(\hat G) \to  \Aut(G)$   with kernel  $C_{\S}(\hat G)$ is given by 	\[\Gamma(\alpha)= \Theta^{-1} \circ K_\alpha \circ \Theta\] 

\end{lemma}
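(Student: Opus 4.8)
The plan is to verify, in turn, that $\Gamma$ lands in $\Aut(G)$, is a homomorphism, has kernel exactly $C_\S(\hat G)$, and is surjective; surjectivity will be witnessed by an explicit homomorphic section $\Aut(G) \to N_\S(\hat G)$, and it is this section that makes $\Gamma$ a retraction in the stated sense.

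First I would check well-definedness. For $\alpha \in N_\S(\hat G)$, conjugation by $\alpha$ is a homeomorphism of $\S$ whose inverse is conjugation by $\alpha^{-1}$; since $\alpha$ normalises $\hat G$, this homeomorphism restricts to a topological automorphism $K_\alpha$ of $\hat G$. As $\Theta \colon G \to \hat G$ is a topological isomorphism by \cref{cl:wow}, the composite $\Gamma(\alpha) = \Theta^{-1} \circ K_\alpha \circ \Theta$ is a (bi-)continuous automorphism of $G$, that is, an element of $\Aut(G)$. The homomorphism property is then immediate from $K_{\alpha\beta} = K_\alpha \circ K_\beta$: substituting and inserting $\Theta \circ \Theta^{-1} = \id$ between the two conjugations yields $\Gamma(\alpha\beta) = \Gamma(\alpha)\Gamma(\beta)$. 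For the kernel, $\Gamma(\alpha) = \id_G$ holds iff $K_\alpha = \id_{\hat G}$, i.e.\ iff $\alpha$ commutes with every element of $\hat G$, which is exactly the condition $\alpha \in C_\S(\hat G)$.

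The main work, and the only step that uses the invariance hypothesis on $\+ S_G$, is surjectivity. Given $\Phi \in \Aut(G)$, I would exploit that $\Phi$ permutes $\+ S_G$ (by invariance) and hence induces a bijection $\hat\Phi$ of the coset space $\+ S_G^*$ by $\hat\Phi(gU) = \Phi(g)\,\Phi(U)$; this is well-defined independently of the representative $g$ because $\Phi$ maps the subgroup $U$ setwise onto $\Phi(U) \in \+ S_G$, and $\Phi \mapsto \hat\Phi$ is a homomorphism into $\Sym(\+ S_G^*)$. Transporting through the fixed bijection $\rho_G$ produces a permutation $\alpha_\Phi := \rho_G^{-1}\circ \hat\Phi \circ \rho_G \in \S$. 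The key computation is that, writing $L_g$ for the left translation $hU \mapsto ghU$ on $\+ S_G^*$, one has $\hat\Phi \circ L_g \circ \hat\Phi^{-1} = L_{\Phi(g)}$ for every $g \in G$; this follows by a direct calculation using that $\Phi$ is a homomorphism. Since $\Theta(g)$ is by definition the transport of $L_g$ through $\rho_G$, this identity reads $\alpha_\Phi\,\Theta(g)\,\alpha_\Phi^{-1} = \Theta(\Phi(g))$. As $g$ ranges over $G$ and $\Phi$ is onto, this shows simultaneously that $\alpha_\Phi \hat G \alpha_\Phi^{-1} = \hat G$, so $\alpha_\Phi \in N_\S(\hat G)$, and that $\Gamma(\alpha_\Phi) = \Phi$. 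Hence $\Gamma$ is surjective and $\Phi \mapsto \alpha_\Phi$ is a homomorphic section, exhibiting $\Gamma$ as a retraction with kernel $C_\S(\hat G)$.

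I expect the main obstacle to be the bookkeeping around the coset action rather than any deep difficulty: one must check that $\hat\Phi$ is well-defined on cosets, and set up the conjugation identity with the correct variance so that it yields genuine normalisation. The computation above gives only $\alpha_\Phi \hat G \alpha_\Phi^{-1} = \{\Theta(\Phi(g)) : g \in G\}$, and it is precisely the surjectivity of $\Phi$ that upgrades this to the full equality $\alpha_\Phi \hat G \alpha_\Phi^{-1} = \hat G$ needed for $\alpha_\Phi$ to normalise $\hat G$.
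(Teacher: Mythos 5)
Your proof is correct and takes essentially the same route as the paper: both construct a homomorphic section $\Aut(G)\to N_{\S}(\hat G)$ from the induced permutation of the coset space $\+ S_G^*$ (your $\alpha_\Phi$ is the paper's $\Delta(\phi)$) and verify the same conjugation identity $\alpha_\Phi\, \Theta(g)\, \alpha_\Phi^{-1} = \Theta(\Phi(g))$. You additionally spell out routine points the paper leaves implicit — that $\Gamma$ lands in $\Aut(G)$, the kernel computation, and that surjectivity of $\Phi$ upgrades the containment $\alpha_\Phi \hat G \alpha_\Phi^{-1} \subseteq \hat G$ to equality — which only strengthens the write-up.
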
	

\begin{proof}[Proof of Lemma]
 Clearly $\Gamma $ is a group homomorphism. To show that $\Gamma$ is a retraction, we will define a  homomorphism  $\Delta \colon \Aut(G) \to \S$ such that  $\Gamma \circ \Delta$ is the identity on $\Aut(G)$.  By our hypothesis that $\+ S_G$ is    invariant under the action of $\Aut(G)$, any $\phi  \in \Aut(G)$  induces a bijection $\bar \phi  \colon \+ S_G^* \to \+ S_G^*$ via \bc $\bar \phi(rU) = \phi(r) \phi(U)$ for $U \in \+ S_G$ and $r \in G$. \ec  Let   $\Delta(\phi)$ be this bijection viewed as a permutation of $\omega$, that is,  
	$\Delta(\phi) =  \rho_G^{-1} \circ \bar \phi  \circ \rho_G $.

\begin{claim}  Given $\phi \in \Aut(G)$ let 
	$\alpha:= \Delta(\phi) $. For any $g \in G$ we have 
\[	\alpha\circ \Theta(g) \circ \alpha^{-1} = \Theta(\phi(g)).\]  Thus $\alpha \in N_{\S}(\hat G)$       and $\Gamma(\alpha) = \phi$. \end{claim}
\n 	To see this, let $A \in \+ S_G^*$ and write $A  =\phi(r)\phi(U)$ for some $r\in G$, and $U \in \+ S_G$. 
Then (suppressing $\rho_G$) we have  $\alpha^{-1}(A)= rU$, so 
$$( \alpha\circ \Theta(g) \circ \alpha^{-1} ) (A) = \alpha(grU)= \phi(g) \phi(r)\phi(U) = \phi(g) A,$$
\n as required. \end{proof}



   The group $N_{\S}(\hat G)/C_{\S}(\hat G)$ is algebraically isomorphic to $\Aut(G)$ via the isomorphism induced by the retraction $\Gamma$ that sends $\alpha$ to its conjugation action on $\hat G \cong G$. It   is non-Archimedean as a quotient of $N_{\S}(\hat G)$ by a closed normal subgroup.   We use this isomorphism to transfer its  topology to $\Aut(G)$. This makes the action on $G$ continuous,  because the action of $N_{\S}(\hat G)$ on $\hat G$ by conjuguation  is continuous, and for each $g \in G$ and $\alpha \in N$, we have $\Theta^{-1}(K_\alpha(\Theta(g)))= \Gamma(\alpha)(g)$.   \end{proof}

\begin{remark}
	Since $\Delta \circ \Gamma$ is continuous,   $\Delta$   is   a topological isomorphism    between  $\Aut(G)$ and a closed subgroup $H$ of $\S$ contained in   $N_{\S}(\hat G)$. Thus $N_{\S}(\hat G)$ is a topological split extension:   $H \cap C_{\S}(\hat G)= \{1\}$ and $H C_{\S}(\hat G) = N_{\S}(\hat G)$.  
\end{remark}

\begin{proof}[Proof of  \cref{th:1}(ii)]
	Write $C=  C_{\S}(\hat G)$.   Given a subgroup $\+ A$ as in~(\ref{eqn: basis ident}), let  $A_i$ be a left coset of a subgroup  $U_i \in \+ S_G$. Let $\+ U$ be the pointwise stabiliser of the set $\{ U_1, A_1, \ldots, U_n, A_n\}$ where we identify the elements of $\+ S_G^*$ with their code numbers. 
	  
	  We  claim that $\alpha\in C \+ U  $ iff $\Gamma(\alpha)\in \+ A$. This   suffices for (ii) since the subgroups of the  form $C\+ U /C$ form a neighbourhood basis  of the identity for  $N_{\S}(\hat G)/C$.
	  
	    If $\alpha \in \+ U$ then for each $i$, and  for each $g\in G$, writing $p = \Theta(g)$,   we have
	\[ g\in A_i \lra p(U_i)= A_i \lra \alpha \circ p \circ \alpha^{-1}(U_i)= A_i \lra K_\alpha(p)(U_i) = A_i\lra \Gamma(\alpha)(g) \in A_i.\]
	Thus $\Gamma(\alpha)\in \+ A$.
	
	Now suppose $\phi= \Gamma(\alpha)\in \+ A$. Since $U_i= A_i^{-1} A_i$, this implies that the automorphism $\phi$   fixes $U_i$. Hence  $\Delta(\phi)\in \+ U$. Since $\alpha (\Delta(\phi))^{-1} \in C$, this verifies the claim.
\end{proof}
%
%
%
%
%
%
%

We next establish \cref{th:1}(iii),  the uniqueness of a  Polish topology on $\Aut(G)$ that makes the action on $G$ continuous. We prove a  more general result. To obtain \cref{th:1}(iii) from it, one chooses as the basis $\+ B $ the set of  left cosets of  subgroups in  $\+ S_G$.

\begin{prop} \label{prop:coarsest} {\rm  Let $G$ be a Polish group, and suppose that    there is a countable basis $\+ B$ of clopen sets for the topology of $G$  such that  $\+ B$ is invariant under the  natural action of $\Aut(G)$.    Let $\tau$ be   the  topology induced on $\Aut(G)$ by the neighbourhood basis ($*$) of the identity, for $A_1, \ldots, A_n \in \+ B$.
	\bi \item[(a)] If  $\sss$ is a  Baire group topology on  $\Aut(G)$ making its action on $G$ continuous, then $\tau \sub \sss$. 
	\item[(b)]  $\tau$ is the unique Polish group topology on $\Aut(G)$ making its  action on $G$ continuous.   \ei}\end{prop}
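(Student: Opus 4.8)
The plan is to realise $\tau$ as the initial (coarsest) group topology on $\Aut(G)$ making a single homomorphism continuous, and then to reduce both parts to an automatic‑continuity fact and the open mapping theorem. Since $\+ B$ is a countable, clopen, $\Aut(G)$‑invariant basis, the action of $\Aut(G)$ on the countable set $\+ B$ yields a homomorphism $\Delta\colon \Aut(G)\to \Sym(\+ B)$, $\Delta(\Phi)(A)=\Phi(A)$, where $\Sym(\+ B)$ carries its usual Polish topology. The basic neighbourhoods $(*)$ are exactly the $\Delta$‑preimages of the pointwise stabilisers of finite subsets of $\+ B$, so $\tau$ is precisely the initial topology induced by $\Delta$. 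Hence, for any topology $\sss$ on $\Aut(G)$, one has $\tau\sub\sss$ if and only if $\Delta$ is $\sss$‑continuous, and this is what I will prove in part~(a).

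For part~(a), let $\sss$ be a Baire group topology making the action on $G$ continuous, and fix $A,A'\in\+ B$. By separate continuity of the action, for each fixed $g\in G$ the orbit map $\Phi\mapsto\Phi(g)$ is $\sss$‑continuous, so (as $A'$ is clopen) $\{\Phi:\Phi(g)\in A'\}$ is $\sss$‑clopen. Since $\Phi(A)=A'$ is equivalent to $(\forall g\in A)\,\Phi(g)\in A'$ together with $(\forall g\notin A)\,\Phi(g)\notin A'$, the coset $\{\Phi:\Phi(A)=A'\}$ is an intersection of $\sss$‑clopen sets, hence $\sss$‑closed. Now $\Aut(G)$ is the disjoint union, over the countable $\+ B$‑orbit of $A$, of these $\sss$‑closed cosets of $\mathrm{Stab}(A)=\{\Phi:\Phi(A)=A\}$. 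As $(\Aut(G),\sss)$ is Baire, it is nonmeager in itself, so one of these countably many closed cosets is nonmeager and thus has nonempty $\sss$‑interior; translating, $\mathrm{Stab}(A)$ has nonempty interior and, being a subgroup, is $\sss$‑open. Each set in $(*)$ is a finite intersection of such stabilisers, hence $\sss$‑open, giving $\tau\sub\sss$. (This is Pettis automatic continuity carried out by hand.)

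For part~(b) I first record that $\tau$ itself qualifies. The action is $\tau$‑continuous: given $(\Phi_0,g_0)$ and a basic clopen $A\ni\Phi_0(g_0)$, invariance of $\+ B$ gives $A':=\Phi_0^{-1}(A)\in\+ B$, and the $\tau$‑open coset $\{\Phi:\Phi(A')=A\}=\Phi_0\,\mathrm{Stab}(A')$, together with the neighbourhood $A'$ of $g_0$, maps into $A$. Moreover $\tau$ is Hausdorff, because $\+ B$, being a basis of the Hausdorff space $G$, separates points, so $\Delta$ is injective: if $\Phi$ fixes every $A\in\+ B$ then $\Phi(g)\in\bigcap_{g\in A}A=\{g\}$. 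Now if $\sss$ is any Polish group topology making the action continuous, part~(a) gives $\tau\sub\sss$, so the identity $(\Aut(G),\sss)\to(\Aut(G),\tau)$ is a continuous bijective homomorphism; by the open mapping theorem for Polish groups it is then a homeomorphism \emph{provided $\tau$ is itself Polish}. Thus everything reduces to showing that $\tau$ is Polish.

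The hard part is exactly this Polishness. Via the embedding $\Delta$, the topology $\tau$ is Polish precisely when $\Delta(\Aut(G))$ is closed in $\Sym(\+ B)$, since a subgroup of a Polish group is Polish in the subspace topology iff it is closed. I would prove closedness by a reconstruction argument: given $\pi$ in the closure of $\Delta(\Aut(G))$, i.e.\ a limit of permutations $\Delta(\Phi_k)$, build an automorphism $\Phi$ of $G$ with $\Delta(\Phi)=\pi$ by letting $\Phi(g)$ be the unique point of $\bigcap_{g\in A}\pi(A)$. Uniqueness is clean: two distinct points in this intersection would be separated, by zero‑dimensionality, by some basic clopen $B$, and since $\pi$ is a limit of the $\Delta(\Phi_k)$ it preserves disjointness and complementation within $\+ B$, excluding one of them. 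Existence (nonemptiness) is the delicate step, where completeness of $G$ must be exploited to control the filter $\{\pi(A):g\in A\}$; one checks symmetrically via $\pi^{-1}$ that $\Phi$ is a bijection and that $\Phi$ is bicontinuous and multiplicative. Once $\Delta(\Aut(G))$ is closed, $\tau$ is Polish and the open‑mapping step above yields $\tau=\sss$, establishing that $\tau$ is the unique Polish group topology making the action continuous. (In the application to \cref{th:1}(iii), where $\+ B$ consists of the cosets of the subgroups in $\+ S_G$, the topology $\tau$ is already known to be Polish as the quotient $N_{\S}(\hat G)/C_{\S}(\hat G)$, so this closedness step is not needed there.)
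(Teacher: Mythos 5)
Your part (a) is correct, and it is in substance the paper's argument carried out by hand: where the paper shows each basic stabiliser $\{\Phi:\Phi(A)=A\}$ is $G_\delta$ for $\sss$ (via a countable dense subset of $A$) and then cites the automatic-continuity theorem for Baire-measurable homomorphisms, you observe instead that each stabiliser is $\sss$-closed and that $\Aut(G)$ splits into countably many $\sss$-closed cosets of it (the orbit of $A$ lies in the countable set $\+ B$), and then run the Baire-category argument directly. Both are valid; yours is self-contained and avoids the citation.

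Part (b) contains a genuine gap, and it sits exactly where you say the ``hard part'' is. You correctly reduce everything to the Polishness of $\tau$, equivalently to the closedness of $\Delta(\Aut(G))$ in $\Sym(\+ B)$, but you do not prove it, and the mechanism you propose for the existence step --- ``completeness of $G$ must be exploited to control the filter $\{\pi(A):g\in A\}$'' --- cannot deliver it. In a noncompact zero-dimensional Polish space a filter base of clopen sets can have empty intersection (in Baire space take $A_n=\{x: x(0)\ge n\}$); completeness only helps when diameters shrink, and nothing in your setup forces that. What makes this kind of reconstruction work in the paper is the special \emph{coset} structure of the basis: in \cref{th:1} the basis consists of cosets of the subgroups in $\+ S_G$, and nonemptiness of the relevant intersections is \cref{lem:KechrisNies}, whose proof (following Kechris--Nies--Tent) needs \emph{full} filters, i.e.\ that every subgroup has both a left and a right coset in the filter --- a two-sided (Roelcke-type) completeness argument that has no analogue for an arbitrary invariant clopen basis $\+ B$. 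So your sketch cannot be completed by the stated means, and it is doubtful that closedness holds at all in the generality of the proposition. Note that the paper's own proof of (b) never attempts this: it combines part (a) with the fact that comparable Polish group topologies coincide, the Polishness of $\tau$ being supplied by the context in which the proposition is applied (in \cref{th:1}(i),(ii), $\tau$ is exhibited as the Polish quotient topology of $N_{\S}(\hat G)/C_{\S}(\hat G)$). Read that way --- (b) as a pure uniqueness statement --- your open-mapping-theorem argument does finish the proof, as your final parenthetical observes; but as a standalone proof of (b) including the assertion that $\tau$ is Polish, your attempt is missing its central step.
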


\begin{proof} (a)   Every  Baire measurable homomorphism from a Baire group $K$ to a separable group $L$  is continuous~\cite[Th.\ 9.9.10]{Kechris:95}.  (Here, to be  Baire measurable means that the pre-image of every open set  in $L$ has  meager  symmetric difference with some  open set in $K$.) We will show that  the identity homomorphism $(\Aut(G), \sss) \to(\Aut(G), \tau)$ is   Baire measurable; we  conclude that  it is continuous and hence $\tau \sub \sss$. 
	
	Each Borel set for $\sss$ has a meager symmetric difference with some open set in the sense of $\sss$ (i.e., it has the property of Baire for $\sss$).  We  verify  that   the $\tau$-open sets in the neighbourhood  basis ($*$) of the identity automorphism are $G_\delta$ for $\sss$: this  will imply that   each  $\tau$-open set is Borel for~$\sss$.   
	
	To do so, for $A\in \+ B$, we verify that  the  set 
	\bc $\+ S=\{ \Phi\in \Aut(G) \colon \, \Phi(A) = A\} $  \ec is $G_\delta$ with respect to  $\sss$. Using that $A$ is closed, let $D$ be a countable dense subset of $A$. Then \bc $\+ S= \bigcap_{g \in D} \{\Phi \in \Aut(G)  \colon \ \Phi(g) \in A \lland \Phi^{-1}(g) \in A\}$.   \ec Thus $\+ S$  is $G_\delta$ for $\sss$ because $A$ is open and $\sss$ makes the action of $\Aut(G)$ on $G$ continuous.  Now each set in ($*$) is a finite intersection of such sets, and hence $G_\delta$ for $\sss$ as well.  
	
	\n (b) If $\sss$ is Polish, then it is Baire, so $\tau \sub \sss$. This implies   $\tau = \sss$ because no Polish group topology on a group can be properly contained in another (see, e.g., \cite[2.3.4]{Gao:09}). 
\end{proof}

We close this section with some remarks on the group of inner automorphisms of $G$  as a subgroup of  $\Aut(G$).  Note that if 
 $\alpha = \Theta(g)$ then $K_\alpha$ defined after \cref{cl:wow} induces the inner automorphism of $G$ given by conjugation by $g$:  for each $h \in G$, 
$$\Gamma(\Theta(g))(h)= [\Theta^{-1} \circ K_{\Theta(g)} \circ \Theta](h) = \Theta^{-1}(\Theta(h)^{\Theta(g)})= h^g.$$

The group $\Inn(G)$ is in general not closed in $\Aut(G)$, even for discrete groups $G$ (see \cite[Section 2]{Nies.Paolini:24}).  Via the   isomorphism  $ N_{\S}(\hat G)/ C_{\S}(\hat G) \cong \Aut(G)$ established above,  the subgroup $\Theta(G) C_{\S}(\hat G)/ C_{\S}(\hat G)$ corresponds to $\Inn(G)$,  so this subgroup can fail to be closed. However, it satisfies   a weaker condition. Recall that a Borel subgroup $H$ of a Polish group is Polishable if $H$ carries a (unique) Polish group topology such  that   the $\sss$-algebra is generates coincides with the  Borel sets inherited from $G$.  
\begin{prop} \label{prop:Polishable} Let $G$ be as in \cref{th:1}. Then $\Inn(G)$ is a Borel subgroup of $\Aut(G)$ that is Polishable.
	\end{prop}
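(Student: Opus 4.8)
The plan is to exhibit $\Inn(G)$ as the image of a continuous injective homomorphism out of a Polish group, and then to invoke the Lusin--Souslin theorem to obtain both Borelness and Polishability at once.

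First I would introduce the conjugation homomorphism $c \colon G \to \Aut(G)$, given by $c(g)(h) = h^g$. By the computation just before the proposition, $c(g) = \Gamma(\Theta(g))$, so $c = \Gamma \circ \Theta$ once we regard $\Theta(G) = \hat G \subseteq N_{\S}(\hat G)$. Since $\Theta \colon G \to \hat G$ is a topological isomorphism (\cref{cl:wow}) and $\Gamma$ is continuous (it is the quotient map $N_{\S}(\hat G) \to N_{\S}(\hat G)/C_{\S}(\hat G)$ composed with the defining isomorphism onto $\Aut(G)$), the map $c$ is a continuous homomorphism whose image is precisely $\Inn(G)$.

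Next I would identify the kernel of $c$ as the centre $Z(G) = \{g \in G : \forall h\; gh = hg\}$, which is closed, being an intersection of closed sets. Thus $G/Z(G)$ is Polish and $c$ descends to a continuous injective homomorphism $\bar c \colon G/Z(G) \to \Aut(G)$ with image $\Inn(G)$. Both $G/Z(G)$ and $\Aut(G)$ (with the topology of \cref{th:1}) are Polish, hence standard Borel, so the Lusin--Souslin theorem \cite[Theorem~15.1]{Kechris:95} applies: a continuous, hence Borel, injection of a standard Borel space has Borel image and is a Borel isomorphism onto that image. This shows at once that $\Inn(G)$ is Borel in $\Aut(G)$.

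Finally, transporting the Polish group topology of $G/Z(G)$ across the bijection $\bar c$ endows $\Inn(G)$ with a Polish group topology. The Lusin--Souslin conclusion that $\bar c$ is a Borel isomorphism onto its image guarantees that the $\sigma$-algebra generated by this transported topology coincides with the Borel sets inherited from $\Aut(G)$, which is exactly Polishability. For uniqueness I would argue as in \cref{prop:coarsest}(b): any two Polish group topologies on $\Inn(G)$ generating the same $\sigma$-algebra make the identity map Baire measurable in both directions, hence continuous both ways, so they agree. The main obstacle I anticipate is precisely the matching of the two Borel structures on $\Inn(G)$---the one inherited from $\Aut(G)$ and the one pushed forward from $G/Z(G)$---and this is exactly where injectivity of $\bar c$ is indispensable, since it is the hypothesis that upgrades the bland statement ``continuous image is analytic'' to ``Borel image, with a Borel inverse.''
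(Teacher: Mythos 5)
Your proposal is correct, and its skeleton coincides with the paper's: both exhibit $\Inn(G)$ as the image of the continuous injective homomorphism $G/Z(G)\to \Aut(G)$ induced by conjugation, and both then invoke the Lusin--Suslin theorem to obtain simultaneously that the image is Borel and that the pushed-forward Borel structure agrees with the one inherited from $\Aut(G)$, which is exactly Polishability.

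The one step where you genuinely diverge is the proof of continuity of the conjugation map. The paper checks it directly against the neighbourhood basis (\ref{eqn: basis ident}) of \cref{th:1}(ii): writing $A_i$ as a right coset of $U_i$ and a left coset of $V_i$, any $g\in\bigcap_i (U_i\cap V_i)$ satisfies $A_i^g=A_i$, so conjugation by such $g$ lands in the given basic open subgroup. You instead factor the conjugation map as $\Gamma\circ\Theta$, using the computation $\Gamma(\Theta(g))(h)=h^g$ recorded just before the proposition, the fact that $\Theta$ is a topological isomorphism onto $\hat G\le N_{\S}(\hat G)$ (\cref{cl:wow}), and continuity of $\Gamma$ as the quotient map onto $N_{\S}(\hat G)/C_{\S}(\hat G)$ followed by the defining homeomorphism with $\Aut(G)$. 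Both routes are valid: yours is more structural and relies only on the construction in \cref{th:1}(i), whereas the paper's is a one-line coset computation that presupposes the basis description of part (ii). Your closing discussion of uniqueness of the Polish topology on $\Inn(G)$ is not required (the paper's notion of Polishability cites uniqueness as known), but the Baire-measurability argument you sketch for it is sound.
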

	\begin{proof}
		We verify that the  natural map $
		\Phi \colon G / Z(G) \to \Aut(G)$  that sends a  $g\in G$ to its conjugation action on $G$ is continuous. Given a basic open subgroup $\+U$ of $\Aut(G)$  as in  ($*$), suppose $A_i$ is a right coset of $U_i$ and left coset of $V_i$. If $g \in \bigcap_i U_i \cap V_i$ then $A_i^g = A_i$ for each $i$, so $\Phi(g) \in \+ U$.  
		
	Now, since $\Phi$ is injective, by the Lusin-Suslin theorem, $\Phi(A)$ is Borel for each Borel set $A \sub G$. So $\Inn(G)$ is Borel in $\Aut(G)$, and the Borel structure on $G/Z(G)$ equals the  Borel structure on $\Inn(G)$ inherited from $\Aut(G)$. 
	\end{proof}

\begin{remark} \cref{prop:Polishable} implies that  for $G$   as in \cref{th:1}, the outer automorphism group $\Out(G)$ is a group with a Polish cover in the sense of \cite{Bergfalk.Lupini.ea:2024}. 
	\end{remark}

  \section{Oligomorphic groups and invertible self-interpretations}
   
 This section will define the  topological group $\+ B(T)$ of invertible  self-interpretations up to a syntactic notion of homotopy of an $\omega$-categorical theory~$T$ (\cref{def: BT}).  Then  it establishes  in  \cref{th:BI Out} that $\+ B(T)$ is t.d.l.c.\ and isomorphic to $\Out(G)$, the group of  outer automorphisms of~$G$, whenever $G= \Aut(M)$ for some   model $M $ of $ T$ with domain $\omega$.

    Firstly, we  represent  $\Aut(G)$  as a topological group $\+ B(M)$ of  self-interpretations of $M$ (\cref{def:BM}). They  are taken modulo an appropriate equivalence relation denoted $\sim$  that equates them up to  definable bijection between their   domains; granted that, they are  invertible.

    Secondly,  we show that passing from invertible self-interpretations of~$M$ to invertible self-interpretations of its theory    corresponds to passing from $\Aut(G)$ to   $\Out(G)$: by \cref{lem: same Th}, two self-interpretations $\alpha_0, \alpha_1$ of $M$  get identified in this process iff $\pi \circ \alpha_0 \sim \alpha_1$  for some $\pi \in G$, which means for invertible interpretations that the corresponding  automorphisms of $G$ are equal up to an inner automorphism of $G$.

  \subsection{Interpretations between $\omega$-categorical structures}  \begin{convention} \label{conv:MNGH|} Throughout,  let $M ,N$ denote $\omega$-categorical structures, and let  $G = \Aut(M)$ and $H =\Aut(N)$. 
  \end{convention} For interpretations between $\omega$-categorical structures  we ``import" the terminology and notation  of Ahlbrandt and Ziegler~\cite{Ahlbrandt.Ziegler:86}: The  $\omega$-categorical  structures with  interpretations form a category; morphisms are denoted $\alpha \colon M \rightsquigarrow N$. This means that  there is a dimension $k \ge 1$, a definable set  $D \sub M^k$ (always without parameters), and $\alpha $ is a  map $D \to M$ that  is onto with definable kernel $E$; furthermore,   each $N$-definable relation $R$  has an $M$-definable pre-image under $\alpha$. We will abuse notation by  also viewing $\alpha$ as a bijection $D/E \to N$. By $\Int(M,N)$ we   denote the set of morphisms $M \rightsquigarrow N$. 
  
    The structure $M^\mathrm{eq}$ has infinitely many sorts $D/E$ as above. Its  elements are called imaginaries over $M$; see \cite{Tent.Ziegler:12}. A function   $\theta \colon D_0/E_0 \to D_1/E_1$ is called $M$-definable if the relation $\{\la d_0, d_1 \ra \colon 
  \theta(d_0/E_0) = d_1/E_1\}$ is $M$-definable. 
  
   Ahlbrandt and Ziegler  \cite{Ahlbrandt.Ziegler:86}  extend the operation $M \mapsto \Aut(M)$ to  a functor from the category of $\omega$-categorical structures with interpretations to oligomorphic groups with continuous homomorphisms, as follows.
   \begin{definition}[essentially  \cite{Ahlbrandt.Ziegler:86}] \label{def:Autfunctor}
For an interpretation $\alpha \colon M \rightsquigarrow  N $ let    $\Aut(\alpha)(g)= \alpha \circ g^\mathrm{eq} \circ \alpha^{-1}$ for $g \in \Aut(M)$, where  $g^\mathrm{eq}$ is the canonical extension of $g$  to an automorphism of  $M^\mathrm{eq}$.   
   \end{definition} 

    For each sort $D/E$ of $M^\mathrm{eq}$,  the interpretations $\alpha \colon D \to N$ with kernel $E$ can be seen as  a closed subspace of the    space  of functions $D/E \to N$ with the topology of pointwise convergence.  So $\Int(M,N)$  is a topological space   which is a disjoint union of   clopen subspaces corresponding to the  sorts. The $\omega$-categorical  structures with interpretations as morphisms thus form a Polish  category:  each set of morphisms is  a Polish space in a Borel way, and the operations of product and inverse  are   continuous.

    \begin{lemma}  \label{lem: Int} Let $M,N$ and $G,H$ be as in \cref{conv:MNGH|}.
    	
    	\n  The map  $\Int(M,N) \times G \to H$ given by  $\alpha \cdot g = \Aut(\alpha)(g)$ is continuous. 
  \end{lemma}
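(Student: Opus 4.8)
The plan is to prove continuity coordinatewise, showing the map is in fact locally constant after evaluation. Recall that $H = \Aut(N) \le \S$ carries the permutation topology, a subbasis of which is given by the conditions $f(b') = b''$ and $f^{-1}(b') = b''$ for $b', b'' \in N$. Hence a map into $H$ is continuous as soon as, for every $b \in N$, both $(\alpha, g) \mapsto \Aut(\alpha)(g)(b)$ and $(\alpha, g) \mapsto \Aut(\alpha)(g)^{-1}(b)$ are continuous into the discrete set $N$. Since $\Aut(\alpha) \colon G \to H$ is a group homomorphism by \cref{def:Autfunctor}, we have $\Aut(\alpha)(g)^{-1} = \Aut(\alpha)(g^{-1})$, and because inversion on $G$ is continuous, the second family reduces to the first. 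So I would fix $b \in N$ and prove that $(\alpha, g) \mapsto \Aut(\alpha)(g)(b)$ is continuous.

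First I would fix a point $(\alpha_0, g_0) \in \Int(M,N) \times G$ and unwind $\Aut(\alpha_0)(g_0)(b) = \alpha_0\bigl(g_0^{\mathrm{eq}}(\alpha_0^{-1}(b))\bigr)$. Write $u := \alpha_0^{-1}(b)$, an imaginary in the sort $D/E$ of $\alpha_0$, pick a representative $c \in D \subseteq M^k$ with $c/E = u$, and set $v := g_0^{\mathrm{eq}}(u) = g_0(c)/E$ and $b' := \alpha_0(v)$, so that $\Aut(\alpha_0)(g_0)(b) = b'$. I would then take as a neighbourhood of $(\alpha_0, g_0)$ the product $V_\alpha \times V_g$, where $V_\alpha = \{\alpha : \alpha(u) = b \text{ and } \alpha(v) = b'\}$ inside the clopen piece of $\Int(M,N)$ of sort $D/E$ (this is open, since the topology on that piece is pointwise convergence of $\alpha$ as a function $D/E \to N$ into a discrete target), and $V_g = \{g \in G : g\uh\text{coordinates of }c = g_0\uh\text{coordinates of }c\}$ (basic open in $G$). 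For any $(\alpha, g)$ in this neighbourhood: the condition $\alpha(u) = b$ forces $\alpha^{-1}(b) = u$ because $\alpha$ is a bijection $D/E \to N$; then $g^{\mathrm{eq}}(u) = g(c)/E = g_0(c)/E = v$, since $g$ and $g_0$ agree on the finitely many coordinates of $c$ and $E$ is invariant; and finally $\alpha(v) = b'$. Hence $\Aut(\alpha)(g)(b) = b'$ is constant on the neighbourhood, giving continuity at $(\alpha_0, g_0)$.

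The only genuine obstacle is that the preimage $\alpha^{-1}(b)$ moves as $\alpha$ varies, so one cannot pin down a single value of $\alpha$ and be done; this is exactly what the two-point restriction of $\alpha$ resolves, pinning $\alpha(u) = b$ to control the input side $\alpha^{-1}(b)$ and $\alpha(v) = b'$ to control the output side. The remaining points are routine bookkeeping in $M^{\mathrm{eq}}$: that $g^{\mathrm{eq}}$ maps the sort $D/E$ to itself and is well defined on it (both because $D$ and $E$ are definable without parameters, hence invariant under automorphisms), and that the coordinatewise image $g(c)$ of the finite tuple $c$ is determined by finitely many values of $g$. I expect no further difficulty beyond this.
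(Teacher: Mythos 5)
Your proof is correct and follows essentially the same route as the paper's: both unwind $\Aut(\alpha)(g)=\alpha\circ g^{\mathrm{eq}}\circ\alpha^{-1}$ and pin down $\alpha$ at the two relevant imaginaries (your $u$ and $v$, the paper's $\ol a/E$ and $\ol b/E$) together with the action of $g$ on the first of them. The only cosmetic differences are that you shrink the $g$-neighbourhood to a basic open set fixing the coordinates of a representative tuple (the paper uses the larger open set of all $g'$ with $g'(\ol a/E)=\ol b/E$), and that your separate treatment of the inverse subbasic conditions is redundant, since $f^{-1}(b')=b''$ is literally the forward condition $f(b'')=b'$.
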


    	\begin{proof}
    		Suppose $\Aut(\alpha)(g)$ is in the subbasic open set of elements of $H$ that send $r$ to $s$. We have $\alpha (\ol a) = r$ and $\alpha (\ol b) = s$ for some $\ol a, \ol b \in D$ such that $g(\ol a /E) = \ol b /E$. If an interpretation  $\alpha'  \colon D \to N$ with the same  kernel $E$ has the same values as    $\alpha$ on these two tuples and an element  $g' \in G$ also satisfies $g'(\ol a /E) = \ol b /E$,  then   $\Aut(\alpha')(g')$ sends $r$ to $s$ as well.  The set of such pairs $\alpha', g'$ is open. 
    	\end{proof}

    	 	Ahlbrandt and Ziegler	\cite{Ahlbrandt.Ziegler:86} defined  a  ``homotopy"  relation  on $\Int(M,N)$: 
    	\begin{definition}[homotopy of interpretations] \label{def: homotopy}
    For morphisms $\alpha_i \colon M \rightsquigarrow N$ ($i= 0,1$), where  $\alpha_i  \colon D_i/E_i \to N$, one says that $\alpha_0$ is homotopic to $\alpha_1$, written $\alpha_0 \sim \alpha_1$, if there is an $M$-definable bijection   $\theta \colon D_0/E_0 \to D_1/E_1$ such that $\alpha_0= \alpha_1 \circ \theta$.  
    	\end{definition} 
    Note that 	for any      bijection $\theta \colon D_0/E_0 \to D_1/E_1$, the set of pairs $\la \alpha_0, \alpha_1\ra$ in $\Int(M,N)$ with $\alpha_i \colon D_i/E_i \to N$ and $\alpha_0= \alpha_1 \circ \theta$ is closed  in $\Int(M,N)^2$. 
    	\begin{lemma} \label{lem: saturation}
    		(i) the equivalence relation $\sim$ is closed.   (ii) For each open set $U \sub \Int(M,N)$,  the saturation $[U]_\sim$  is also open. 
    	\end{lemma}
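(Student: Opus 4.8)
The plan is to exploit the decomposition of $\Int(M,N)$ into the clopen subspaces $\Int_{D/E}(M,N)$ consisting of those interpretations whose source is the sort $D/E$ of $M^{\mathrm{eq}}$, together with the crucial finiteness consequence of $\omega$-categoricity: by the Ryll--Nardzewski theorem $M$ has only finitely many $\emptyset$-definable subsets of each product sort, so there are only finitely many $M$-definable bijections $\theta\colon D_0/E_0 \to D_1/E_1$ between any two \emph{fixed} sorts. This per-pair finiteness is exactly what upgrades the ``single-$\theta$'' closed sets supplied by the observation immediately preceding the lemma into a closed relation.

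For (i), I would first record that $\Int(M,N)^2$ is partitioned into the clopen sets $\Int_{D_0/E_0}(M,N) \times \Int_{D_1/E_1}(M,N)$; since a subset of a space with a clopen partition is closed iff its trace on each piece is closed, it suffices to treat one piece. On the piece indexed by $(D_0/E_0, D_1/E_1)$ we have
\[ \{\la \alpha_0,\alpha_1\ra : \alpha_0 \sim \alpha_1\} \cap \big(\Int_{D_0/E_0}(M,N)\times \Int_{D_1/E_1}(M,N)\big) = \bigcup_\theta \{\la \alpha_0,\alpha_1\ra : \alpha_0 = \alpha_1\circ\theta\}, \]
where $\theta$ ranges over the $M$-definable bijections $D_0/E_0 \to D_1/E_1$. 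Each set in the union is closed by the observation before the lemma, and by the finiteness noted above the union is finite, hence closed. As this holds on every clopen piece, $\sim$ is closed in $\Int(M,N)^2$.

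For (ii), finiteness is not needed. Given an open set $U$, unwinding the definition shows that an interpretation $\alpha$ on a sort $D_1/E_1$ lies in $[U]_\sim$ iff there is an $M$-definable bijection $\theta\colon D_0/E_0 \to D_1/E_1$, for some source sort $D_0/E_0$, with $\alpha\circ\theta \in U$. For each fixed $\theta$ the precomposition map $R_\theta\colon \alpha \mapsto \alpha\circ\theta$, from $\Int_{D_1/E_1}(M,N)$ to $\Int_{D_0/E_0}(M,N)$, is continuous for the topology of pointwise convergence: a subbasic condition $\{\gamma : \gamma(x) = n\}$ on the target pulls back to $\{\alpha : \alpha(\theta(x)) = n\}$, which is again subbasic. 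Hence each $R_\theta^{-1}(U)$ is open, and $[U]_\sim = \bigcup_\theta R_\theta^{-1}(U)$ is a union of open sets, so it is open.

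The main obstacle is the finiteness step in (i): globally the relation $\sim$ is a union over infinitely many definable bijections (one family for each pair of sorts), and an infinite union of closed sets need not be closed, so the argument genuinely rests on $\omega$-categoricity bounding the number of definable bijections between each fixed pair of sorts. The one point needing care is to confirm that $M$-definable bijections between sorts of $M^{\mathrm{eq}}$ correspond to $\emptyset$-definable relations of $M$ itself, so that Ryll--Nardzewski applies; everything else is the routine continuity of precomposition and the standard fact about traces on a clopen partition.
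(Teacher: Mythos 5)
Your proof is correct and takes essentially the same approach as the paper: part (i) rests on the finitely many $M$-definable bijections between any fixed pair of sorts (via $\omega$-categoricity) combined with the single-$\theta$ closedness observation preceding the lemma, and part (ii) expresses the saturation as a union over definable bijections of open sets obtained by precomposition, which is exactly the paper's computation. The only differences are expository: you spell out the clopen-partition reduction and phrase (ii) as continuity of the precomposition maps $R_\theta$, whereas the paper compresses (i) into one line and verifies (ii) directly on basic open sets.
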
 
    	\begin{proof} (i)  holds since there are only finitely many $M$-definable bijections $D_0/E_0 \to D_1/E_1$. 
    		
    		\n (ii) 
    		We may assume that $U$ is of the form $\{\alpha \colon \alpha(d_i)= n_i  \ (i = 1, \ldots, k)\}$ where $d_i \in D/E, n_i \in N$. Then $[U]_\sim $ is the union of sets $\{ \alpha \circ \theta \colon \alpha \in U\}$ over all definable bijections $\theta \colon D'/E' \to D/E$; such a  set equals $\{\beta \colon  D'/E' \to N \mid \beta(\theta^{-1} (d_i) )= n_i\}$ and hence  is open. 
    	\end{proof}

    Replacing interpretations by their  equivalence classes with respect to~$\sim$, we obtain  a Polish quotient category.   In particular, $\Int(M,M)/_\sim $ is a Polish monoid.  
    \begin{remark} \label{rem:Pol Monoid} For any Polish  monoid $(S, \cdot,1)$, the group $L$ of (two sided) invertible elements is  a Polish  group as follows.  $L $ is algebraically isomorphic to 
    $	\{ (a, b ) \in S\times \check S \colon ab= ba =1\}$ which is a closed subset of the Polish space $S \times \check S$ also closed under the monoid operation (here $\check S$ is the dual monoid $(S,\cdot', 1)$, where $r \cdot ' s= s \cdot r)$.  The operation of inversion corresponds to exchanging the two components of a pair, which is a    continuous operation. \end{remark}
    
     \subsection{ $\+ B(M) $ is topologically isomorphic to $ \Aut(G)$}

  \begin{definition} \label{def:BM}
	Let $\+ B(M)$ be the Polish  group of invertible elements of the Polish monoid  $\Int(M,M)/_\sim$. 
  \end{definition}
  
   Recall the functor $\Aut$ of  \cref{def:Autfunctor}.  Ahlbrandt and Ziegler  \cite[Thm 1.2]{Ahlbrandt.Ziegler:86} show that a continuous homomorphism $R \colon \Aut(M) \to \Aut(N)$ is of the form $\Aut(\alpha)$ for some $\alpha \colon M \rightsquigarrow N$  if and only   if  the action of $\Aut(M)$ on $N$ induced by  $R$    has only finitely many 1-orbits. They also show  in their Theorem~1.3 that  $\Aut(\alpha)= \Aut(\beta) $ if and only if  $\alpha \sim\beta$. So
the functor $\Aut$   induces an algebraic isomorphism of   groups $\gamma_M \colon \+ B(M) \to \Aut(G)$  for $G=\Aut(M)$.  
  
  \begin{lemma}  \label{lem: top iso}
  	$\gamma_M \colon \+ B(M) \to \Aut(G) $ is   a topological isomorphism. 
  	\end{lemma}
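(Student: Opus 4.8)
The plan is to exploit the uniqueness statement in \cref{prop:coarsest}. Since $\gamma_M$ is already an algebraic isomorphism between the Polish groups $\+ B(M)$ and $\Aut(G)$, I transport the Polish group topology of $\+ B(M)$ along $\gamma_M$ to obtain a second Polish group topology $\tau'$ on $\Aut(G)$; the topology $\tau$ supplied by \cref{th:1} is an a priori different one. Because $G=\Aut(M)$ is oligomorphic it satisfies the criterion of part~(A), so \cref{prop:coarsest} applies with $\+ B=\+ S_G^*$ the set of left cosets of subgroups in $\+ S_G$, and $\tau$ is the \emph{unique} Polish group topology making the action of $\Aut(G)$ on $G$ continuous. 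Thus it suffices to show that the action of $(\Aut(G),\tau')$ on $G$ is continuous; then $\tau'=\tau$, and $\gamma_M$ is a homeomorphism, hence a topological isomorphism.

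The transported action is the map $\+ B(M)\times G\to G$ sending $([\alpha]_\sim,g)$ to $\Aut(\alpha)(g)$, and the task is to prove it continuous. First I descend the continuous map of \cref{lem: Int}, namely $f\colon \Int(M,M)\times G\to G$, $(\alpha,g)\mapsto \Aut(\alpha)(g)$, to the quotient monoid $S:=\Int(M,M)/_\sim$. By the Ahlbrandt--Ziegler result that $\Aut(\alpha)=\Aut(\beta)$ iff $\alpha\sim\beta$, the map $f$ is constant on $\sim$-classes in its first argument, so it factors as $f=\bar f\circ(q\times\id_G)$ for a well-defined $\bar f\colon S\times G\to G$, where $q\colon \Int(M,M)\to S$ is the quotient map. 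By \cref{lem: saturation}(ii) the map $q$ is open, hence $q\times\id_G$ is an open continuous surjection and therefore a quotient map; since $f$ is continuous, $\bar f$ is continuous. Finally, by \cref{rem:Pol Monoid} the projection $p\colon \+ B(M)\to S$ onto the first coordinate (restriction of $S\times\check S\to S$) is continuous, so the transported action $([\alpha]_\sim,g)\mapsto \bar f(p([\alpha]_\sim),g)$ is continuous as a composite. This is exactly the action of $(\Aut(G),\tau')$ on $G$, and \cref{prop:coarsest}(b) then yields $\tau'=\tau$.

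The main obstacle is the passage to the quotient: one must verify both that $q\times\id_G$ is a genuine quotient map (which rests on $q$ being \emph{open}, i.e.\ on \cref{lem: saturation}(ii), since products of mere quotient maps need not be quotient maps) and that the group topology on $\+ B(M)$ — defined in \cref{rem:Pol Monoid} through the embedding into $S\times\check S$, and a priori finer than the subspace topology from $S$ — still projects continuously to $S$, so that the descent through $\bar f$ can be pulled back to $\+ B(M)$. An alternative route, bypassing \cref{prop:coarsest}, would be to show directly that $\gamma_M$ is Borel: a Borel homomorphism between Polish groups is continuous, and a continuous bijective homomorphism of Polish groups is automatically open; but checking Borelness of $\gamma_M$ against the basic open subgroups~($*$) of $\Aut(G)$ is no simpler than the continuity computation above.
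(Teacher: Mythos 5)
Your proof is correct and follows essentially the same route as the paper's: reduce, via the uniqueness statement \cref{prop:coarsest}(b), to showing that the action of $\+ B(M)$ on $G$ is continuous, obtain that continuity by descending the map of \cref{lem: Int} through the quotient $S=\Int(M,M)/_\sim$ using the openness from \cref{lem: saturation}(ii), and then pass to $\+ B(M)$ via its realisation inside $S\times\check S$ from \cref{rem:Pol Monoid}. The paper's version is terser, but it invokes exactly these ingredients; your write-up only makes explicit the quotient-map and projection details that the paper leaves implicit.
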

  \begin{proof}
  	  $\+ B(M)$ is a Polish group by \ref{rem:Pol Monoid}, and is algebraically isomorphic to $\Aut(G)$. So, by  \cref{prop:coarsest}(b),  it suffices to show that the action of $\+ B(M)$ on $G$ given by $[\alpha] \cdot g = \gamma_M(\alpha)(g)$ is continuous.  
	     By    \cref{lem: Int,lem: saturation}, the action of $S= \Int(M,M)/\sim$ on $G$ is continuous. So the action of $S \times \check  S $ on $G$  given by $([\alpha]_\sim, [\beta]_\sim)\cdot  g = [\alpha] \cdot  g$ is continuous. This implies the statement. 
  \end{proof}
\subsection{$\+ B(T) $ is topologically isomorphic to $ \Out(G)$}

   \subsubsection*{The Polish  category of $\omega$-categorical theories with interpretations} 	 In the following, all languages are relational, and all theories are assumed to be $\omega$-categorical. We begin by  discussing  {interpretations} of   theories, using a notation that is coherent with  the approach of Ahlbrandt and Ziegler~\cite{Ahlbrandt.Ziegler:86} to the semantic setting. This is somewhat more work than in~\cite{Ahlbrandt.Ziegler:86}, because all the definitions need to be syntactical (though we sometimes use semantic terminology).
 
 \begin{definition}[Interpretation of a theory $U$ in a theory $T$] \label{def:int}
 	An interpretation $\alpha \colon T \rightsquigarrow U$ is given by objects  (a) and  (b):

 	\begin{itemize}
 		\item[(a.i)] a dimension $k \geq 1$ 
 		 [Notation: for each  variable $z$ we have 
 		  a $k$-tuple of variables $\overline{z} = (z_1, \ldots, z_k)$]
 		\item[(a.ii)] formulas $\phi_D(\overline{x})$ and $\phi_E(\overline{x}, \overline{y})$ in the language of $T$ such that \bc  $T \vdash \text{``}D \neq \emptyset\text{''}$, and
 			  $T \vdash \text{``}E \text{ is an equivalence relation on } D\text{''}$. \ec

 	Here, $D$ denotes the definable set (in some model of $T$) given by $\phi_D$, and $E$ the equivalence relation given by $\phi_E$.
 	
		\item[(a.iii)] For each atomic formula $R(x_1, \ldots, x_n)$ in the language of $U$, excluding equality,  a formula $\alpha.R(\overline{x}_1, \ldots, \overline{x}_n)$ in the language of $T$ such that $T$ proves its invariance under the equivalence relation~$E$.
 	 	\end{itemize}
 For any formula $\phi$ in the language of $U$, we define $\alpha.\phi$ to be the formula in the language of $T$ obtained as follows.
 	\begin{itemize}
 		\item[(b.i)] replace each atomic formula $R(y_1, \ldots, y_n)$ with $\alpha.R(\overline{y}_1, \ldots, \overline{y}_n)$,
 		\item[(b.ii)] replace $x = y$ with $\phi_E(\overline{x}, \overline{y})$,
 		\item[(b.iii)] replace each quantifier $\forall x$ (or $\exists x$) with $k$ quantifiers of  the same type over the components of~$\overline{x}$.
 	\end{itemize}
 	
 	We require the following condition to hold:
 	\[
 	C_{T,U}(\alpha): \qquad \text{for every sentence } \psi \in U, \text{ we have } \alpha.\psi \in T.
	\] (End of \cref{def:int}.)
 \end{definition}

Note that if the signature of $U$ is finite,  then the set of  interpretations $T \rightsquigarrow U$ is countable. 
  Interpretations are composed similar to the  case of  structures in \cite[p.\ 65]{Ahlbrandt.Ziegler:86}. The trivial self-interpretation  of $T$ in itself is the interpretation $1_T \colon \,   T \rightsquigarrow T$ where $\phi_D(x)$ is $x= x$, $\phi_E$ is the identity, and $\phi_R$ is  $Rx_1 \ldots x_n$ for each $n$-ary relation symbol~$R$.

  \subsubsection*{The interpretations between theories can be seen as  a path space of a tree}
  \begin{convention} \label{conv: uniq}
  	{\rm We  henceforth assume that for each theory $T$,    the set of formulas in its language is provided with a wellordering of type $\omega$ such as the length-lexicographical ordering,  and that formulas  will be  least in their   class of equivalence under $T$. Then,     for each $n$-tuple of variables $x_1,\ldots, x_n$ there are only finitely many formulas $\phi(x_1, \ldots, x_n)$; recall here that all theories are   assumed to be  $\omega$-categorical.}
  \end{convention}
  
\begin{definition}
	  The  set of interpretations $\alpha \colon T\rightsquigarrow U$ is denoted $\mathrm{Int}(T,U)$.\end{definition} 
	  
	  \begin{remark}
$\mathrm{Int}(T,U)$ can be seen as the set  of  paths of  a subtree $B_{T,U}$ of $\omega^{<\omega} $,  as follows.  Only  the root (the empty string) can be infinitely branching. The  first level of the tree encodes  the triples    $c= \la k, \phi_D, \phi_E\ra$ that $T$ allows. Each further level  (the set of strings of a length $k >1$) is dedicated to a relation symbol $R$ of $L_U$: the extension on that level decides which       formula  of $L_T$ the symbol $Rx_1 \ldots x_n$ is assigned to (this  formula needs to be  invariant under $E$ according to $T$). Let   $B_{T,U}$ be the tree of strings $\sigma$  such that      the  condition  $C_{T,U}(\sigma) $ holds for each sentence $\psi$ such that all   relation symbols of $U$ occuring in it have been assigned by $\sigma$. 
	  \end{remark} 
  
\begin{remark}
	Whether  the  condition  $(*)_{T,U}$ \textit{fails} for an interpretation can be seen from a finite amount of information, so the path space $[B_{T,U}]$ can be identified with the set of interpretations.  We have a natural locally compact, totally disconnected topology on this space where the sub-basic open sets are   of the form $[\sss]$ for $\sss \in  B_{T,U}$. 
\end{remark}

 \subsubsection*{A syntactic version of homotopy, and the quotient category $\mathbb T$}
   Homotopy is simpler for theories than  in the case of models  (\cref{def: homotopy}). We use a    semantic terminology for easier  readability.
  
\begin{definition}[The notation $\approx_T$]  \label{def: homotopy 2} 
  For   interpretations $\alpha_0, \alpha_1 \colon T \rightsquigarrow U$  where $\alpha_i$ is based on (formulas defining) sorts  $D_i/E_i$ in any model, we write   $\alpha_0 \approx_T \alpha_1$  if for some formula  $\phi_\Theta$,  the theory $T$ contains the sentence expressing that 
    $\phi_\Theta$    defines a  bijection $\Theta \colon D_0/E_0 \to D_1/E_1$, and  for each $n$-ary relation symbol $R$ of  $L_U$  the theory $T$ contains the   sentence expressing   that   $\Theta^n (\alpha_0.R ) = \alpha_1.R$.
\end{definition}

  Theories with composition of interpretations form a category. We obtain a quotient category  $\mathbb T$. Its  objects are  the $\omega$-categorical theories in a countable relational language.
       Its  \textit{morphisms} $T \rightsquigarrow U$  are the equivalence classes $[\alpha]_{\approx_T}$ of interpretations. $\mathrm{Mor}(T,U)$ denotes this set of morphisms  from $T$ to $U$ in $\mathbb T$.

\begin{fact}
	Composition of   morphisms in  $\mathbb T$ is   well-defined and associative.  
\end{fact}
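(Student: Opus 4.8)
The plan is to factor the Fact into two independent assertions about representatives: that composition of interpretations is associative up to $\approx$, and that $\approx$ is a congruence for composition. The second is exactly what \emph{well-defined} means, since it says the class $[\beta]_{\approx_U}\circ[\alpha]_{\approx_T}$ is independent of the chosen representatives. The single tool driving both is the contravariant formula-translation operation $\psi\mapsto\alpha.\psi$ attached to an interpretation $\alpha\colon T\rightsquigarrow U$ by clauses (b.i)--(b.iii) of \cref{def:int}, together with its two structural features: first, composition of interpretations realises composition of translations, i.e. $(\beta\alpha).\psi=\alpha.(\beta.\psi)$ for every $L_V$-formula $\psi$ when $\alpha\colon T\rightsquigarrow U$ and $\beta\colon U\rightsquigarrow V$; and second, the defining condition $C_{T,U}(\alpha)$ guarantees that $\alpha.(-)$ carries $U$-theorems to $T$-theorems. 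I would isolate at the outset the routine syntactic lemma that $\alpha.(-)$ commutes with the Boolean connectives, with the relativised quantification of (b.iii), and with substitution of variables; this is the engine for everything below.

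For the congruence property I would prove left and right compatibility separately and compose them along $\beta_0\alpha_0\approx_T\beta_1\alpha_0\approx_T\beta_1\alpha_1$. Right compatibility ($\alpha_0\approx_T\alpha_1\Rightarrow\beta\alpha_0\approx_T\beta\alpha_1$): given the $T$-definable bijection $\Theta\colon D_0/E_0\to D_1/E_1$ intertwining the atomic translations $\alpha_0.R$ and $\alpha_1.R$ as in \cref{def: homotopy 2}, I apply $\Theta$ coordinatewise, as $\Theta^l$ with $l=\dim\beta$, on the $\beta$-tuples. By the structural lemma, the property ``$\Theta$ intertwines $\alpha_0.(-)$ and $\alpha_1.(-)$ on atomic formulas'' propagates by induction to all $L_U$-formulas, in particular to $\phi_D^{\beta},\phi_E^{\beta}$ and the $\beta.R$; hence $\Theta^l$ restricts to a $T$-definable bijection of the composite sorts witnessing $\beta\alpha_0\approx_T\beta\alpha_1$. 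Left compatibility ($\beta_0\approx_U\beta_1\Rightarrow\beta_0\alpha\approx_T\beta_1\alpha$): I take the $U$-definable witness $\Xi$ for $\beta_0\approx_U\beta_1$ and translate it to $\alpha.\Xi$. Since $U$ proves ``$\Xi$ is a bijection with $\Xi^n(\beta_0.R)=\beta_1.R$'', condition $C_{T,U}(\alpha)$ makes $T$ prove the $\alpha$-translation of that sentence; using $(\beta_i\alpha).R=\alpha.(\beta_i.R)$ together with the fact that $\alpha.(-)$ commutes with the substitution defining $\Xi^n$, this translated sentence asserts precisely that $\alpha.\Xi$ witnesses $\beta_0\alpha\approx_T\beta_1\alpha$.

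For associativity of representatives, given $\alpha\colon T_1\rightsquigarrow T_2$, $\beta\colon T_2\rightsquigarrow T_3$, $\gamma\colon T_3\rightsquigarrow T_4$, both bracketings $(\gamma\beta)\alpha$ and $\gamma(\beta\alpha)$ have dimension $k_\alpha k_\beta k_\gamma$ and, by iterating $(\beta\alpha).\psi=\alpha.(\beta.\psi)$, act on every $L_{T_4}$-formula $\psi$ as $\psi\mapsto\alpha.(\beta.(\gamma.\psi))$; with the tupling conventions of \cref{def:int} their defining data coincide up to the canonical re-grouping bijection of $M^{k_\alpha k_\beta k_\gamma}$, which is $\approx$-trivial, so the two composites define the same morphism of $\mathbb T$. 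I expect the main obstacle to be not the high-level bookkeeping but exactly the substitution lemma underpinning both compatibilities: verifying by formula induction that $\alpha.(-)$ genuinely commutes with the operations used to build the homotopy witnesses, namely coordinatewise application of a definable bijection and the relativised quantifiers of (b.iii), so that an intertwining of atomic translations really does lift and translate to an intertwining of all relevant translations. As a cross-check one may instead transfer from the semantic case: since each $T$ here is complete and $\omega$-categorical, fixing countable models $M_T$ identifies $\mathrm{Int}(T,U)$ with $\Int(M_T,M_U)$ and $\approx_T$ with the homotopy $\sim$ of \cref{def: homotopy}, whence the Fact follows from the corresponding statement of Ahlbrandt and Ziegler \cite[p.~65]{Ahlbrandt.Ziegler:86}.
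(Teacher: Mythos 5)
Your proof is correct and follows essentially the same route as the paper's: the paper likewise reduces well-definedness to the two compatibility cases, witnessing $\beta\circ\alpha_0\approx_T\beta\circ\alpha_1$ by (a coordinatewise version of) the same formula $\theta$, and witnessing compatibility on the other side by the translated formula $\alpha.\xi$, exactly as you do. You spell out details the paper treats as routine (the formula-induction/substitution lemma, the coordinatewise witness $\Theta^l$, and the associativity of composition of representatives); only your final semantic cross-check needs care, since by \cref{lem: same Th} the relation $\approx_T$ corresponds to homotopy $\sim$ only up to composition with an automorphism of the target structure, not to $\sim$ itself.
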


\begin{proof}
	Suppose first we have interpretations  $\alpha_0, \alpha_1 \colon  T\rightsquigarrow U$ and $\beta \colon U \rightsquigarrow V$ such that $\alpha_0 \approx_T \alpha_1$ via a formula $\theta$ in the language of $T$. Then   $\beta  \circ \alpha_0   \approx_T \beta  \circ \alpha_1$ via the same formula.
	
	Suppose next  that we have interpretations  $\alpha \colon  T\rightsquigarrow U$ and $\beta_0, \beta_1 \colon U \rightsquigarrow V$ such that $\beta_0 \approx_U \beta_1$ via a formula $\xi$ in the language of $U$. Then $\alpha \circ  \beta_0\approx_T \alpha \circ  \beta_1$ via the formula $\alpha.\xi$.
\end{proof}
 

\begin{definition} \label{def: BT}
	Let   $\+ B(T)$  be the group of  invertible elements of the  topological  monoid $\mathrm{Mor}(T,T)$ (see \cref{rem:Pol Monoid}). 
\end{definition}
 The topology on   the path space $[B_{T,U}]$ induces  a quotient topology on   $\mathrm{Mor}(T,U)$.
\begin{lemma}[Properties of morphism spaces] \label{lem: wawa}  \mbox{ }
	
	{\rm \n  (i) The   projection $q \colon [B_{T,U}] \to  \mathrm{Mor}(T,U)$ that sends $\alpha$ to its equivalence class $[\alpha]_\approx$  is open.  
	
\n 	 (ii) The space $\mathrm{Mor}(T,U)$ has a basis consisting of clopen sets, and hence is totally disconnected.  
	 
	\n    (iii) The space $\mathrm{Mor}(T,U)$  is locally compact.   
	   
	\n    (iv) $\+ B(T)$ is canonically a totally disconnected, locally compact  (t.d.l.c.) group, which is discrete in the case that the signature of $T$ is finite.} \end{lemma}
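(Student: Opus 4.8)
The plan is to exploit the explicit description of $\mathrm{Int}(T,U)$ as the path space of the tree $B_{T,U}\subseteq\omega^{<\omega}$, together with two combinatorial features of that tree. First, only the root of $B_{T,U}$ is infinitely branching: every node of length $\geq 1$ has finitely many successors, since by \cref{conv: uniq} there are only finitely many $L_T$-formulas in any fixed tuple of variables. Consequently, for $\sigma\in B_{T,U}$ with $|\sigma|\geq 1$ the basic set $[\sigma]$ is clopen (its complement is a union of the remaining basic cones) and compact (by K\"onig's lemma, as the subtree above $\sigma$ is finitely branching). Second, the first coordinate $\sigma(0)=\langle k,\phi_D,\phi_E\rangle$ of any $\alpha\in[\sigma]$ fixes the sort $D/E$ on which $\alpha$ is based; and between any two fixed sorts $D_0/E_0$ and $D_1/E_1$ there are, again by $\omega$-categoricity and \cref{conv: uniq}, only finitely many $T$-definable bijections $\Theta$. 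This finiteness is the analogue of \cref{lem: saturation}(i) and is the engine of the argument.

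For (i), it suffices to show that the $\approx_T$-saturation of each basic clopen $[\sigma]$ with $|\sigma|\geq 1$ is open, since saturation commutes with unions and a continuous surjection is open iff the saturation of every open set is open. Fix such a $\sigma$, so all $\alpha\in[\sigma]$ are based on a common sort $D/E$. For each $T$-definable bijection $\Theta$ from $D/E$ to a sort $D_1/E_1$, pushing each relation-formula forward along $\Theta$ (and normalising by \cref{conv: uniq}) defines a map $\tau_\Theta$ on the interpretations based on $D/E$; this map acts injectively and coordinatewise on the finitely branching levels of the tree, so it sends the cone $[\sigma]$ onto a cone $\tau_\Theta([\sigma])$, which is in particular clopen. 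Since $[[\sigma]]_{\approx_T}=\bigcup_\Theta\tau_\Theta([\sigma])$ with $\Theta$ ranging over all definable bijections out of $D/E$, the saturation is a union of open sets and hence open.

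For (ii) and (iii) I would first establish that $\approx_T$ is closed in $[B_{T,U}]^2$; together with openness of $q$ this yields that $\mathrm{Mor}(T,U)$ is Hausdorff (for an open continuous surjection, Hausdorffness of the quotient is equivalent to closedness of the relation). Closedness follows from the sort decomposition: the products $[c_0]\times[c_1]$, over level-$1$ nodes $c_0,c_1$, form a clopen partition of $[B_{T,U}]^2$; on each block, $\approx_T$ is the union of the (closed) graphs of the finitely many $\tau_\Theta$ whose source and target are the prescribed sorts, hence is closed there; and since these blocks are open and cover the space, a set closed in each block is closed. Granting Hausdorffness, for $|\sigma|\geq 1$ the set $q([\sigma])$ is the continuous image of a compact set, hence compact and so closed, while by (i) it is open; thus it is clopen. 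As $q$ is an open continuous surjection, the images $q([\sigma])$ of a basis form a basis, giving (ii); and since these basic neighbourhoods are compact, every point has a compact neighbourhood, giving (iii). Note also that the countable tree yields a countable basis, so $\mathrm{Mor}(T,U)$ is second countable.

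Finally, for (iv), by \cref{rem:Pol Monoid} the group $\+ B(T)$ is realised as the closed subset $\{(a,b)\in S\times\check S : ab=ba=1\}$ of $S\times\check S$, where $S=\mathrm{Mor}(T,T)$. By (ii) and (iii), $S$ is a second countable, Hausdorff t.d.l.c.\ space, hence so is $S\times\check S$; a closed subset of a t.d.l.c.\ space is again t.d.l.c., and the monoid operations make $\+ B(T)$ a topological group, so $\+ B(T)$ is t.d.l.c. If the signature of $T$ is finite, the tree $B_{T,T}$ has finite height, so its maximal nodes $\sigma$ are leaves with $[\sigma]=\{\sigma\}$; hence $[B_{T,T}]$ is discrete, and any quotient of a discrete space is discrete, so $\mathrm{Mor}(T,T)$ and its subspace $\+ B(T)$ are discrete. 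The main obstacle is the proof that $\approx_T$ is closed and that saturations stay open: both rest on the $\omega$-categoricity input that there are only finitely many definable bijections between two fixed sorts, which must be combined carefully with the local finiteness of the sort decomposition so that the otherwise infinite unions over target sorts cause no harm.
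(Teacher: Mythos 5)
Your proposal is correct, and its engine is the same as the paper's: the finitely many $T$-definable bijections between two fixed sorts (the syntactic counterpart of \cref{lem: saturation}(i)) induce homeomorphisms between cones $[c_0]$ and $[c_1]$ --- your maps $\tau_\Theta$ are exactly the paper's $h_\theta$ --- and openness of saturations in (i), compactness of cones above level one via K\"onig's lemma in (iii), and the closed-subset representation of $\+ B(T)$ from \cref{rem:Pol Monoid} in (iv) are handled just as in the paper. Where you genuinely diverge is part (ii): the paper argues directly that the complement of each saturation $[\sss]_{\approx}$ is open in the path space (it is the union of the images of $[c_0]\setminus[\sss]_{\approx}$ under the finitely many $h_\theta$, together with the cones $[d]$ over sorts admitting no definable bijection with $c_0$), so saturations are clopen and their images give a clopen basis; you instead prove that $\approx_T$ is closed in $[B_{T,U}]^2$ (via the clopen block decomposition into products $[c_0]\times[c_1]$, with finitely many closed graphs per block), invoke the criterion that an open quotient map has Hausdorff image iff the relation is closed, and then get clopenness of $q([\sss])$ from ``compact inside Hausdorff is closed''. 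Your route is slightly longer but buys two things the paper leaves implicit: Hausdorffness of $\mathrm{Mor}(T,U)$, which is genuinely needed both to pass from a clopen basis to total disconnectedness in (ii) and to see that $\{(a,b)\colon ab=ba=1\}$ is closed in (iv); and an explicit argument for the discreteness claim in (iv) (finite signature $\Rightarrow$ finite-height tree $\Rightarrow$ discrete path space $\Rightarrow$ discrete quotient), which the paper asserts but does not prove.
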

\begin{proof} We write $\approx$ for $\approx_T$.

	\n  (i) 	  It suffices to  show that  $ [\sss]_{\approx}$ is open in $[B_{T,U}] $  	for each $\sss \in  B_{T,U} -\{\ES\}$.    
	   By $c,d$ etc.\ we denote sorts of $T$, encoded by triples $ \la k, \phi_D, \phi_E\ra$ as in \cref{def:int}. They can be identified with strings on  $ B_{T,U}$ of length $1$. Given $c_i =  \la k_i, \phi_{D_i}, \phi_{E_i}\ra$,   if   $T$ proves  that 
	   $\phi_\Theta$    defines a  bijection $\Theta \colon D_0/E_0 \to D_1/E_1$  such that     for each $n$-ary relation symbol $R$ of  $L_U$ we have  $\Theta^n (\alpha_0.R ) = \alpha_1.R$, then $\phi_\theta$ defines a homeomorphism $h_\theta \colon [c_0]\to [c_1]$.   This implies that $ [\sss]_{\approx}$ is open for each such $\sss$,  as required.

	   \medskip
	   
	 	\n    (ii)    It suffices to  show that  the complement of $ [\sss]_{\approx}$ is open in $[ B_{T,U} ]$  	for each $\sss \in  B_{T,U} -\{\ES\}$.    Suppose the first entry of $\sss$ is $c_0$. Clearly  $\+ D= [c]- [\sss]_{\approx}$ is open. The complement of $ [\sss]_{\approx}$ now consists of all the images  of $\+ D$ under  homeomorphisms $h_\theta  \colon [c_0] \to [c_1] $ as above, together with all $[d]$ such that there is no such homeomorphisms with $[c]$. Hence the complement  of $ [\sss]_{\approx}$ is open as required. 
 
 \medskip
 
	 	\n    (iii)  holds because the open set $[c]_{\approx}$ is compact in  $\mathrm{Mor}(T,U)$ for each sort $c $ of $T$, and each $\alpha_{\approx}$ has such a set as a neighbourhood.

\medskip
	  
	\n  	  (iv) Recall   \cref{rem:Pol Monoid} that the elements of $\+ B(T)$ are represented by pairs $\la [\alpha]_{\approx}, [\beta]_{\approx} \ra \in \mathrm{Mor}(T,T)$ such that $\alpha  \beta  \approx        \beta \alpha  \approx    1_T$. As such they form a closed subset of $\mathrm{Mor}(T,T)$ which is therefore t.d.l.c. Clearly the group operations are continuous. 
\end{proof}

 \subsubsection*{From interpretations of structures to  interpretations of theories}
 \begin{definition}[The forgetful functor $\Th$] \label{def: hatting}
 	Suppose  $T = \Th(M)$ and  $U = \Th(N)$ for countable  structures $M, N$ in relational languages. Let   $\alpha \colon M \rightsquigarrow N$ be a $k$-dimensional  interpretation of structures. Let $\phi_U$ and $\phi_E$ be   formulas defining  in $M$ the domain,   and     $ \{\la \ol x, \ol y \ra \colon \alpha(\ol x) = \alpha(\ol y)\}$, respectively.  Denote by  $\widehat \alpha = \Th(\alpha)$  the interpretation of the theory $U$ in the theory $T$ given by $\la k, \phi_D, \phi_E \ra$  and the assigment  that $\widehat \alpha.R$ is the formula defining  $\alpha^{-1}(R^N)$ in $M$  (which is unique by \cref{conv: uniq}).  
 \end{definition}
 
 If  $\alpha_0, \alpha_1 \in \Int(M,N)$ and $T= \Th(M)$, we will  write $\alpha_0 \approx \alpha_1$ as an abbreviation for $\widehat  \alpha_0 \approx_T \widehat \alpha_1$.  Recall the relation of homotopy on $\Int(M,N)$ from \cref{def: homotopy}.
 \begin{lemma}  \label{lem: same Th}
 	\label{hom versus sim} 
 	Suppose   $\alpha_0, \alpha_1 \in \Int(M,N)$. Then 
 	\bc  $\alpha_0\approx \alpha_1$ iff  there exists some $\pi\in\Aut(N)$ such that $\pi\circ \alpha_0\sim \alpha_1$. \ec
 \end{lemma}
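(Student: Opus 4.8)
The plan is to unwind both sides into a single statement about the fixed model $M$, exploiting that $T = \Th(M)$ is complete, so that a sentence lies in $T$ exactly when $M$ satisfies it. Throughout I view each $\alpha_i$ as a bijection $D_i/E_i \to N$ (as the excerpt permits), so that for an automorphism $\pi$ of $N$ and an $M$-definable bijection $\theta \colon D_0/E_0 \to D_1/E_1$ the equation $\pi \circ \alpha_0 = \alpha_1 \circ \theta$ makes sense and is precisely the meaning of $\pi \circ \alpha_0 \sim \alpha_1$. The bridge between the two formulations is the bijection $\sigma := \alpha_1 \circ \theta \circ \alpha_0^{-1} \colon N \to N$: I will show that the homotopy datum $\theta$ for $\approx_T$ and the twist $\pi$ for $\sim$ are interchangeable, with $\theta$ the common definable bijection of sorts and $\pi = \sigma$ the induced map on $N$.

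For the direction from $\sim$ to $\approx$, suppose $\pi \circ \alpha_0 = \alpha_1 \circ \theta$ with $\theta$ an $M$-definable bijection. I claim $\theta$ (via its defining formula $\phi_\Theta$) witnesses $\widehat\alpha_0 \approx_T \widehat\alpha_1$. Since $M \models$ ``$\phi_\Theta$ defines a bijection $D_0/E_0 \to D_1/E_1$'' and $T$ is complete, $T$ contains this sentence. It remains to check, for each $n$-ary relation symbol $R$ of the language of $N$, that $T$ proves $\theta^n(\widehat\alpha_0.R) = \widehat\alpha_1.R$; again by completeness this reduces to verifying in $M$ that $\theta^n(\alpha_0^{-1}(R^N)) = \alpha_1^{-1}(R^N)$, where $\widehat\alpha_i.R$ is by definition the formula defining $\alpha_i^{-1}(R^N)$. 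For a tuple $\ol d$ over $D_0/E_0$, applying the identity $\pi \circ \alpha_0 = \alpha_1 \circ \theta$ coordinatewise gives $\alpha_1(\theta^n(\ol d)) = \pi(\alpha_0(\ol d))$, and since $\pi \in \Aut(N)$ preserves $R^N$ we obtain $\alpha_0(\ol d) \in R^N \iff \alpha_1(\theta^n(\ol d)) \in R^N$, which is exactly the desired equality of pullbacks.

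For the converse, suppose $\widehat\alpha_0 \approx_T \widehat\alpha_1$ via a formula $\phi_\Theta$. By completeness of $T$, in $M$ the formula $\phi_\Theta$ defines a bijection $\theta \colon D_0/E_0 \to D_1/E_1$ with $\theta^n(\alpha_0^{-1}(R^N)) = \alpha_1^{-1}(R^N)$ for every $R$. Set $\pi := \alpha_1 \circ \theta \circ \alpha_0^{-1}$, a bijection of $N$. Unwinding the pullback equality (writing $\ol a = \alpha_0(\ol d)$ and using that $\theta^n$ is a bijection) shows, for each $R$ and each $\ol a \in N^n$, that $\ol a \in R^N \iff \pi(\ol a) \in R^N$; hence $\pi$ preserves every relation, so $\pi \in \Aut(N)$. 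By construction $\pi \circ \alpha_0 = \alpha_1 \circ \theta$, so $\theta$ witnesses $\pi \circ \alpha_0 \sim \alpha_1$, completing the equivalence.

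The main obstacle is not any single hard step but the careful bookkeeping between syntax and semantics: I must keep track of the pullback formulas $\widehat\alpha_i.R$ and the coordinatewise pushforward $\theta^n$, and justify at each turn that the relevant first-order sentence lies in $T$ precisely because $M$ realises it. This is where completeness of $\Th(M)$ and $\omega$-categoricity do the work, the latter guaranteeing that the sets $\alpha_i^{-1}(R^N)$ and the bijection $\theta$ are genuinely definable with unique least defining formulas by \cref{conv: uniq}.
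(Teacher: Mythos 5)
Your proof is correct and takes essentially the same route as the paper's: the substantive direction is identical, constructing $\pi = \alpha_1 \circ \theta \circ \alpha_0^{-1}$ and showing it lies in $\Aut(N)$ via the pullback identity $\theta^n(\alpha_0^{-1}(R^N)) = \alpha_1^{-1}(R^N)$, so that $\theta$ itself witnesses $\pi \circ \alpha_0 \sim \alpha_1$. The only cosmetic difference is in the easy direction, where the paper notes $\pi \circ \alpha_0 \approx \alpha_0$ (the hats coincide since $\pi$ fixes each $R^N$ setwise) and then uses that $\sim$ implies $\approx$ together with transitivity, whereas you verify directly that the witness $\theta$ for $\sim$ also witnesses $\approx_T$ -- the same computation in different packaging.
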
 
 \begin{proof} 
 	Let $\alpha_i \colon D_i/E_i \to N$. Recall that the $\alpha_i$  are onto maps.  Suppose the right hand side holds. Clearly  $\pi \circ \alpha_0 \approx \alpha_0$. The hypothesis implies   $\pi\circ \alpha_0\approx \alpha_1$. Since the relation  $\approx $ on $ \Int(M,N)$ is transitive, we conclude that  $\alpha_0\approx \alpha_1$. 
 	
 	Now suppose the left hand side holds  via a formula $\theta$. Let $f\colon D_0/E_0 \to D_1/E_1$ be the bijection defined by $\theta$ in $M$. 
 	Then $\pi=\alpha_1 \circ f \circ \alpha_0^{-1}$ is well-defined and a permutation of $N$. 
 	It suffices to show that $\pi\in\Aut(N)$.  Let $k\ge 1$ and $A\subseteq N^k$ be definable in $N$. 
 	Since $\alpha\approx\beta$ via $\theta$, we have  $f(\alpha_0^{-1}(A))= \alpha_1^{-1}(A)$. Therefore $\pi(A)= \alpha_1(f(\alpha_0^{-1}(A))) = A$. 
 \end{proof}

\begin{lemma}\label{lem:ints theories} \mbox{ }
	{\rm \n  	(i) $\Th$ is a full  functor from the category of $\omega$-categorical countable structures to  the category of $\omega$-categorical theories. 
		
			\n 	(ii) If $\alpha, \beta  \colon M \rightsquigarrow N$ and  $\alpha \sim \beta$, then $\Th(\alpha)\approx \Th(\beta)$. 
			
		\n 	(iii) The functor  $\Th$ induces a surjective group homomorphism $\chi_M \colon \+ B(M) \to \+ B(T)$. 
			
			\n (iv)   $\chi_M$ is continuous.  
			}
\end{lemma}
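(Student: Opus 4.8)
The four parts build on one another, so the plan is to treat them in order: first show $\Th$ is a functor and is full, then that it respects the two homotopy relations, then assemble the homomorphism $\chi_M$, and finally check continuity. Everything before (iv) is bookkeeping with the semantic correctness of interpretations (the substitution clauses of \cref{def:int}), the uniqueness of least representatives from \cref{conv: uniq}, and \cref{lem: same Th}; the one genuinely delicate point is the topology in (iv).

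For (i), functoriality is routine: that $\Th(\alpha)$ satisfies the condition $C_{T,U}$ of \cref{def:int} is exactly the statement that $M\models\widehat\alpha.\psi$ whenever $N\models\psi$, the standard correctness of an interpretation of structures, and preservation of identities and composition follows because pullback of definable relations is functorial and \cref{conv: uniq} makes the resulting least formulas unique. For fullness I would argue as follows: given an interpretation $\gamma\colon T\rightsquigarrow U$ of theories, interpret its data $\langle k,\phi_D,\phi_E\rangle$ together with the formulas $\gamma.R$ inside the fixed countable model $M\models T$, obtaining a countable $L_U$-structure $M^\gamma$ on $(D/E)^M$; the condition $C_{T,U}(\gamma)$ forces $M^\gamma\models U$. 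Since $U$ is $\omega$-categorical and $N$ is its countable model, there is an isomorphism $h\colon M^\gamma\to N$, and composing the quotient map with $h$ yields $\alpha\colon M\rightsquigarrow N$ with $\alpha^{-1}(R^N)$ defined by $\gamma.R$, so $\Th(\alpha)=\gamma$ by uniqueness of least formulas.

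Part (ii) is direct: if $\alpha\sim\beta$ via an $M$-definable bijection $\theta\colon D_0/E_0\to D_1/E_1$ with $\alpha=\beta\circ\theta$, then the formula $\phi_\Theta$ defining $\theta$ witnesses $\widehat\alpha\approx_T\widehat\beta$, because $T=\Th(M)$ is complete and $\alpha=\beta\circ\theta$ gives $\theta(\alpha^{-1}(R^N))=\beta^{-1}(R^N)$, which is precisely the clause $\Theta^n(\widehat\alpha.R)=\widehat\beta.R$ of \cref{def: homotopy 2}. With (i) and (ii) in hand, $\Th$ descends to a monoid homomorphism $\bar\Th\colon\Int(M,M)/{\sim}\to\mathrm{Mor}(T,T)$ carrying invertibles to invertibles, and $\chi_M$ is its restriction to $\+ B(M)$; it is automatically a group homomorphism. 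For surjectivity I take an invertible $[\gamma]_\approx\in\+ B(T)$ with inverse $[\delta]_\approx$, use fullness to write $\gamma=\Th(\alpha)$ and $\delta=\Th(\beta)$, and note $\Th(\alpha\beta)=\gamma\delta\approx_T 1_T$ and likewise $\Th(\beta\alpha)\approx_T 1_T$; in the abbreviation of \cref{lem: same Th} this reads $\alpha\beta\approx 1_M$ and $\beta\alpha\approx 1_M$, so by that lemma (with $N=M$) there are $\pi,\pi'\in G$ with $\pi\circ(\alpha\beta)\sim 1_M$ and $\pi'\circ(\beta\alpha)\sim 1_M$. Since automorphisms of $M$ represent invertible elements of $\Int(M,M)/{\sim}$, both $[\alpha]_\sim[\beta]_\sim$ and $[\beta]_\sim[\alpha]_\sim$ are invertible, and the elementary monoid fact that $ab$ and $ba$ invertible forces $a$ invertible gives $[\alpha]_\sim\in\+ B(M)$ with $\chi_M([\alpha]_\sim)=[\gamma]_\approx$.

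The main obstacle is (iv), and I would reduce it to continuity of the underlying map $\Th\colon\Int(M,N)\to[B_{T,U}]$ on the function/path spaces (applied with $N=M$). Since $q_T$ is continuous and $q_M$ is an open quotient map by \cref{lem: saturation}, continuity of $\bar\Th$, and hence of $\chi_M$ (the restriction of $\bar\Th$ to both coordinates of the pair representation in \cref{rem:Pol Monoid}), follows from $\bar\Th\circ q_M=q_T\circ\Th$ once $\Th$ is continuous. To see $\Th$ is continuous it suffices to check that $\Th^{-1}([\sigma])$ is open for each $\sigma\in B_{T,U}$. The level-one (sort) data cuts out a clopen piece of the disjoint-union space $\Int(M,N)$, so the only issue is a constraint ``$\widehat\alpha.R=\psi$'', equivalently $\{\alpha:\alpha^{-1}(R^N)=\psi^M\}$, which is a priori only \emph{closed}, being an intersection over all tuples of the sort of conditions on finitely many values of $\alpha$. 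The key observation upgrading closed to open is that, by $\omega$-categoricity and \cref{conv: uniq}, as $\alpha$ ranges over a fixed sort the formula $\widehat\alpha.R$ takes only finitely many values; these closed sets thus form a finite partition of the clopen sort-piece, so each is clopen. Hence $\Th^{-1}([\sigma])$ is a finite intersection of clopen sets and $\Th$ is continuous (indeed locally constant when the signature of $U$ is finite). I expect this finiteness step to be the crux; the remainder is formal manipulation of quotient topologies.
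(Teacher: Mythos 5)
Your proposal is correct and takes essentially the same route as the paper's proof: fullness by interpreting the data of a theory-interpretation inside the countable model and invoking $\omega$-categoricity to get an isomorphism with the target model, surjectivity of $\chi_M$ via \cref{lem: same Th} and fullness, and continuity of $\chi_M$ reduced to continuity of $\Th$ on the spaces of interpretations. Where the paper is terse (it calls (ii) ``evident'' and (iv) ``clear''), you supply exactly the right missing details --- the two-sided invertibility argument in (iii) (the paper as written only exhibits a right inverse for $[\pi\circ\gamma]_\sim$) and the finiteness-of-definable-sets observation that makes $\Th$ locally constant on each sort piece --- so these are refinements of, not departures from, the paper's argument.
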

\begin{proof}   
(i) $\Th$ is clearly a functor. To see that $\Th$ is full, suppose that $\alpha\in \mathrm{Mor}(T,T)$. 
	Let $M$ be a countable model of $T$. By $(*)_{T,U}$, $D/E$ is a model of $T$, where $D$ and $E$ are as in Definition \ref{def:int}. Since $T$ is $\omega$-categorical, there exists an isomorphism $D/E \rightarrow M$. 
	The lifting $\gamma\colon D\rightarrow M$ to $D$ then satisfies $\hat{\gamma}=\alpha$. 
	
	\n (ii) is evident.  
	
	\n 
	 (iii) $\chi_M$ is clearly a homomorphism. To show it is onto, 
  suppose that $(\alpha,\beta)$ represents an element of $\+ B(T)$. 
Since $\Th$ is full by (i), there exist $\gamma, \delta\in \Int(M,M)$ such that  $\hat{\gamma}=\alpha$ and $\hat{\delta}=\beta$. 
Since $\gamma\circ \delta\approx_T \id$,  by Lemma \ref{lem: same Th} there is  $\pi\in\Aut(M)$ such that $(\pi\circ\gamma)\circ \delta\sim \id$. 
Since $\pi\circ \gamma\in \Int(M,M)$ satisfies $\hat{\pi\circ \gamma}=\alpha$, $\chi_M$ is surjective. 
 
 \n  (iv) is clear by (iii) and  since the functor $\Th$ is continuous on $\Int(M,N)$.   
\end{proof}

Recall the topological isomorphism $\gamma_M  $ from \cref{lem: top iso}, and the  surjective  group  homomorphism $\chi_M $ from  (iii) of~\cref{lem:ints theories}:   
\[ \Aut(G) \stackrel{\gamma_M}{\longleftarrow} \+ B(M)  \stackrel{\chi_M}{\longrightarrow} \+ B(T). \]
Our  goal is to show that $\chi_M\circ \gamma_M^{-1}$ induces a  group homeomorphism $\Out(G) \to \+ B (T)$ as required for (ii) of \cref{th:BI Out},
\begin{lemma} \label{lem: triv image}
Let $\alpha \in \Int(M,M)$. Then  
\bc $\chi_M([\alpha]_\sim)= 1 \LR \gamma_M(\alpha)\in \Inn(G)$. \ec
\end{lemma}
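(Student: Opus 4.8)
The plan is to unwind both sides of the claimed equivalence and show that each reduces to the single condition
\[
(\dagger)\qquad \alpha \sim \pi \quad\text{for some } \pi \in G,
\]
where an automorphism $\pi \in G = \Aut(M)$ is regarded as the one-dimensional self-interpretation of $M$ with domain $M$, kernel equality, and underlying map $\pi$.

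First I would analyse the left-hand side. By construction $\chi_M([\alpha]_\sim) = [\widehat\alpha]_{\approx_T}$, and the identity of $\+ B(T)$ is the class of the trivial self-interpretation $1_T$. Since $\Th$ is a functor (\cref{lem:ints theories}(i)), it preserves identities, so $\widehat{\id_M} = 1_T$. Hence $\chi_M([\alpha]_\sim) = 1$ is exactly $\widehat\alpha \approx_T \widehat{\id_M}$, i.e.\ $\alpha \approx \id_M$ in the abbreviation introduced before \cref{lem: same Th}. Using that $\approx$ is an equivalence relation, this is equivalent to $\id_M \approx \alpha$, and applying \cref{lem: same Th} with $N = M$ yields that it holds iff there is $\pi \in \Aut(M) = G$ with $\pi \circ \id_M \sim \alpha$. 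As $\pi \circ \id_M = \pi$, this is precisely $(\dagger)$.

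Next I would analyse the right-hand side. Here $\gamma_M(\alpha) = \Aut(\alpha)$, and the inner automorphisms of $G$ are exactly the conjugations $\mathrm{conj}_\pi \colon g \mapsto \pi g \pi^{-1}$ for $\pi \in G$. A short computation shows $\mathrm{conj}_\pi = \Aut(\pi)$ when $\pi$ is viewed as the one-dimensional self-interpretation above: since that interpretation has kernel equality, $g^{\mathrm{eq}}$ on its sort is just $g$, whence $\Aut(\pi)(g) = \pi \circ g \circ \pi^{-1}$. Thus $\Inn(G) = \{\Aut(\pi) : \pi \in G\}$, i.e.\ $\Inn(G)$ is the $\gamma_M$-image of the automorphisms sitting inside $\+ B(M)$. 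By Ahlbrandt--Ziegler's result that $\Aut(\alpha)=\Aut(\beta)$ iff $\alpha\sim\beta$ \cite[Thm 1.3]{Ahlbrandt.Ziegler:86} (equivalently, by injectivity of $\gamma_M$ from \cref{lem: top iso}), we get $\Aut(\alpha) = \Aut(\pi)$ iff $\alpha \sim \pi$. Hence $\gamma_M(\alpha) \in \Inn(G)$ iff $(\dagger)$ holds, and combining the two analyses gives the lemma.

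I expect the only delicate points to be bookkeeping rather than conceptual: correctly identifying $1_T$ with $\widehat{\id_M}$ so that triviality of $\chi_M([\alpha]_\sim)$ unwinds to a homotopy relation with the identity, and verifying that the self-interpretation attached to an automorphism $\pi$ is sent by $\Aut$ to conjugation by $\pi$. Once these identifications are pinned down, \cref{lem: same Th} and \cite[Thm 1.3]{Ahlbrandt.Ziegler:86} do all the work, since both sides collapse to $(\dagger)$.
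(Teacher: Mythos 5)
Your proof is correct and takes essentially the same route as the paper: both arguments rest on \cref{lem: same Th} together with Ahlbrandt--Ziegler's Theorem 1.3 (equivalently, injectivity of $\gamma_M$) and the identification of $\Inn(G)$ with $\{\Aut(\pi) : \pi \in G\}$, where $\pi$ is viewed as a one-dimensional self-interpretation. The only difference is organizational---you show both sides equivalent to the pivot condition $(\dagger)$, while the paper proves the two implications separately---and your flagged bookkeeping point is handled correctly (what is needed, and what holds, is $\widehat{\id_M} \approx_T 1_T$, i.e.\ equality of morphisms in the quotient category, not literal equality of interpretations).
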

\begin{proof}
	First suppose that the right hand side holds. Thus using the notation \cref{def:Autfunctor} there is $g\in G$ such that $\Aut(\alpha) = \Aut(g)$. Therefore $\alpha \sim g$, which implies $\hat \alpha = \hat g$. Clearly $\hat g \approx 1_T$, whence the left hand side holds. 
	
	Now suppose  the left hand side holds. Then $\hat \alpha \approx \hat  {\mathrm {id} _M}$. So  by \cref{hom versus sim}, there is $\pi \in \Aut(M)= G$ such that $\pi \circ \alpha \sim  \mathrm {id} _M$, and hence there is $h= \pi^{-1} \in G$ such that $\alpha \sim h$.  So $\gamma_M(\alpha) = \gamma_M(h)$ which is in $\Inn(G)$. 
\end{proof}


 \begin{proof}[Proof of \cref{th:BI Out}]
 $\+ B(T)$ is totally disconnected, locally compact by  \cref{lem: wawa}. It remains to show that it  is   topologically isomorphic to $\Out(G)$.  
 
 By \cref{lem: top iso} we have a topological isomorphism $\gamma_M^{-1} \colon \Aut(G) \to \+ B(M)$. By \cref{lem: triv image} $\gamma_M^{-1} $ sends $\Inn(G)$ to the kernel $K$ of  $\chi_M$.  So, using that $\chi_M$ is surjective,   $\chi_M\circ \gamma_M^{-1}$ induces a group homeomorphism 
 $ \Out(G) = \Aut(G)/\Inn(G) \cong \+B(M)/K \cong \+ B(T)$, as required. 
\end{proof}

%

\section{Borel equivalence between  \\  categories of groups  and  of meet groupoids}


\subsection{Some category theoretic preliminaries}

\begin{definition}[Mac Lane~\cite{Maclane:98}] \label{def: equivalence categories}  \

	\n (a) Given a category $\+ C$, a  functor $\Upsilon  \colon \+ C \to \+ C $ is \textit{homotopic} to the identity, written $\Upsilon \sim 1_\+ C$, if the following holds. For each object $M$ of $\+ C$,  there is an isomorphism  $\eta_M \colon M \to \Upsilon(M)$ such that the diagram  \[ \xymatrix { X  \ar^p @ {->}  [r]  \ar^{\eta_X} @ {->}  [d] & Y\ar^{\eta_Y} @ {->}  [d] \\ 
		\Upsilon(X)  \ar^{\Upsilon(p)} @ {->}[r]   & \Upsilon(Y)}\]
	commutes for each morphism $p \colon X \to Y$.

\n (b) 	An \textit{equivalence} of categories $\+ C, \+ D$ is given by a pair of functors $\Gamma \colon \+ C \to \+ D$ and $\Delta \colon \+ D \to \+ C$ such that $\Delta \circ \Gamma \sim 1_\+ C$ and $\Gamma \circ \Delta \sim 1_\+ D$. \end{definition} 
	\begin{definition} A \textit{Borel category} is a small  category that can be seen as a Borel structure in the sense of \cite{Montalban.Nies:13}. In particular, the objects and morphisms  form  Borel sets in   appropriate Polish spaces.  If  $\+C$ and $\+ D$ are Borel categories, a Borel equivalence between $\+ C$ and $\+ D$ is an equivalence such that the functors and the assigments $X \to \eta_X$ witnessing the homotopies    are Borel. 
\end{definition}

\subsection{The Borel equivalence}

\begin{definition}[Kechris et al., \cite{Kechris.Nies.etal:18}, in Th.\ 3.1] \label{def:ICB}
	Let $\Gr$ be a Borel class   of closed subgroups of $\S$ closed under conjugation.  We say that $\Gr$ satisfies the \textit{invariant countable  basis condition} (ICB) if one can to $G \in \Gr$ assign in a Borel way a countable set $\+ S_G$ of open  subgroups of $G$ that form a neighbourhood  basis of $1_G$, in    a way that is invariant under topological  isomorphisms of groups in~$\Gr$: if $h \colon G\to H$ is such an isomorphism, then $U \in \+ S_G $ iff $h(U) \in \+  S_H$ for each $U$.  
\end{definition}

\begin{theorem} \label{thm: Borel equivalence} 
	Suppose that a Borel    class $\Gr$ of closed subgroups of $\S$ satisfies the invariant  countable  basis condition. Then $\Gr$ as a category with topological isomorphism is Borel, and there is  a Borel category $\+ M$ of countable structures in a finite signature with isomorphism, and       functors $\+ W \colon \Gr \to \+ M$ and $\+ G \colon \+ M \to \Gr$ that  induce a Borel equivalence of categories. \end{theorem}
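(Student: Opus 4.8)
The plan is to realize the desired equivalence as a Stone-type duality: send a group $G \in \Gr$ to the countable combinatorial structure recording how the left cosets of its distinguished open subgroups multiply and intersect, and recover $G$ from that structure as a space of coherent filters. This extends the duality for oligomorphic groups from the authors' work with Tent to the present ICB setting. Along the way, note that $\Gr$ is a Borel category because its morphism set (topological isomorphisms between closed subgroups of $\S$) is a Borel subset of a suitable Polish space, by the standard coding used in Kechris et al.~\cite{Kechris.Nies.etal:18}.

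\textbf{The functor $\+ W$.} To $G \in \Gr$ with its Borel-assigned basis $\+ S_G$ I associate the \emph{meet groupoid} $\+ W(G)$ with domain the set $\+ S_G^{\ast}$ of left cosets of subgroups in $\+ S_G$, after first enlarging $\+ S_G$ so as to be closed under finite intersection and under the $\Aut(G)$-action (this preserves all hypotheses, including ICB-invariance). The finite signature consists of: a partial product, where $A \cdot B$ is the setwise product $AB$ whenever the latter is again a coset of a subgroup in $\+ S_G$; a setwise inverse $A \mapsto A^{-1}$; and a meet $A \wedge B = A \cap B$, which is a coset of $U \cap V$ when $A, B$ are cosets of $U, V$ and the intersection is nonempty. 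The idempotents of the groupoid are exactly the subgroups in $\+ S_G$, and coset inclusion is recovered from $\wedge$. Transporting along a bijection $\rho_G \colon \omega \to \+ S_G^{\ast}$ (as in the proof of \cref{th:1}(i)) yields a countable structure. Since $\+ S_G$ is assigned in a Borel way and the group operations are Borel on the Effros space, $G \mapsto \+ W(G)$ is Borel; the target class $\+ M$ is cut out by the finitely many meet-groupoid axioms together with the Borel condition of arising in this way, hence is Borel. On morphisms, a topological isomorphism $f \colon G \to H$ carries $\+ S_G$ to $\+ S_H$ by the invariance clause of ICB, so it induces a bijection of coset spaces preserving product, inverse and meet, and this is $\+ W(f)$.

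\textbf{The functor $\+ G$ and the natural isomorphisms.} Conversely, from $\+ N \in \+ M$ I recover a group $\+ G(\+ N)$ as its space of \emph{coherent filters}: filters $F$ in the meet-semilattice $(\+ N, \wedge)$ containing exactly one coset of each idempotent of $\+ N$, with the groupoid-induced product and the topology whose sub-basic neighbourhoods are $\{F : A \in F\}$ for $A \in \+ N$. A structure isomorphism transports coherent filters, defining $\+ G$ on morphisms. For $G \in \Gr$ the unit $\eta_G \colon G \to \+ G(\+ W(G))$ sends $g$ to the filter $\{A \in \+ S_G^{\ast} : g \in A\}$; it is a continuous injective homomorphism, and it is onto and open because a coherent filter determines a $\subseteq$-decreasing family of cosets of a neighbourhood basis whose intersection is a single point of $G$, using that $G$ is closed and that $\+ S_G$ is a genuine neighbourhood basis. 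Dually, the counit $\epsilon_{\+ N} \colon \+ N \to \+ W(\+ G(\+ N))$ identifies $A \in \+ N$ with the set of coherent filters through $A$, and is an isomorphism of meet groupoids. Naturality of $\eta, \epsilon$ and Borelness of $G \mapsto \eta_G$ and $\+ N \mapsto \epsilon_{\+ N}$ then give a Borel equivalence in the sense of \cref{def: equivalence categories}.

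\textbf{Main obstacle.} The routine part is checking the meet-groupoid axioms and the continuity of the functors. The genuine work is twofold: first, establishing $\+ G(\+ W(G)) \cong G$ \emph{as topological groups}, i.e.\ that the coherent filters of $\+ W(G)$ are exactly the points of $G$, which rests on $G$ being closed and on $\+ S_G$ being a neighbourhood basis; and second, carrying out every step \emph{uniformly in a Borel way}, including choosing the bijections $\rho_G$ so that group isomorphisms correspond precisely to structure isomorphisms. These Borel-uniformity bookkeeping issues, rather than the abstract duality itself, are where the care is required.
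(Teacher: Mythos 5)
Your construction is the same duality the paper uses---the meet groupoid of cosets on one side, the group recovered as coherent filters (equivalently, as automorphisms of the coset structure) on the other, with the same unit and counit---so the overall route is right. But two steps you treat as routine are exactly where the content lies, and as written they are gaps. The first concerns the target category $\+ M$. You cut it out by ``the meet-groupoid axioms together with the Borel condition of arising in this way''. If that means the isomorphism-saturated image of $\+ W$, Borelness is unjustified: the image of a Borel class under a Borel assignment is in general only analytic, and you give no intrinsic characterization to fall back on. If instead $\+ M$ consists of all countable structures satisfying your (unspecified) axioms, then two things break: your counit $\epsilon_{\+ N} \colon \+ N \to \+ W(\+ G(\+ N))$ need not be surjective, because $\+ W(\+ G(\+ N))$ is built from the ICB-assigned basis $\+ S_{\+ G(\+ N)}$ and nothing forces each subgroup in that basis to equal $\hat U$ for an idempotent $U \in \+ N$; and $\+ G(\+ N)$ need not even lie in $\Gr$, which your functor $\+ G \colon \+ M \to \Gr$ requires. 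The paper resolves both at once in \cref{def:MM}: the objects of $\+ M$ are the full meet groupoids $M$ with $\+ G(M) \in \Gr$ such that every $\+ U \in \+ S_{\+ G(M)}$ has the form $\hat U$ for some idempotent $U \in M$; by \cref{lem:Borel hat} this condition is Borel, and it is precisely what makes the counit onto.

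Second, you assert that the counit ``is an isomorphism of meet groupoids'' with no argument and with the axioms of your structures left open. The nontrivial point is that in an \emph{abstract} full meet groupoid every $^*$coset $A \neq \ES$ is realized, i.e.\ $\hat A = \{F \colon A \in F\} \neq \ES$; otherwise distinct elements of $\+ N$ collapse and the counit is not even injective, let alone structure-preserving. This is the paper's \cref{A hat nonempty}, proved by recursively building an automorphism as an increasing union of partial maps, and it uses the ``level up'' and ``level down'' axioms (g), (h) of \cref{def:CosetGroupoid} in an essential way; these hold in every $\+ W(G)$, but they do not follow from the bare groupoid-plus-meet axioms, and without them the statement is false. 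One further needs \cref{correct form of left cosets} (every left coset of $\hat U$ in $\+ G(M)$ has the form $\hat A$) together with the compatibility facts of \cref{subset is correct}. Your ``main obstacle'' paragraph flags only the unit side, that $\+ G(\+ W(G)) \cong G$ topologically because full filters have singleton intersection (the paper's \cref{lem:KechrisNies,lem:eta isom}); that is indeed needed, but the counit side---and the definition of $\+ M$ that supports it---is where the remaining substance of the theorem sits, and your proposal has no substitute for it.
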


As a consequence, $\+ W$  is a full functor. This yields  another (albeit roundabout) proof that $\Aut(G)$, now assuming that  $G \in \Gr$,  can be topologized as a non-Archimedean group:
$\Aut(G)$ is topologically isomorphic to $\Aut(\+ W(G))$. One can easily check that when $\Aut(G)$ carries   this topology, its  action on $G$ is continuous. So by \cref{th:1}(iii) this is  the same topology as the one given in \cref{th:1}(i).

We may   suppose without loss of generality that   for each $G  \in \Gr$ the neighbourhood basis $\+ S_G$ of $1_G$ is closed under finite intersections.  For,  one  can replace the given class of open subgroups $\+ S_G$ by its closure under finite intersections,  maintaining the Borel and   invariance conditions.  We let the domain of      $\+ W(G)$ be the left cosets of  subgroups of $G$  in $\+ S_G$, together with $\ES$. It has a groupoid structure given by product of ``matching" cosets $A,B$ (namely, $A$ is a left coset and $B$ a right coset of the same subgroup), and a lower semilattice structure  given by intersection.   $\+ W(G)$ is called the \textit{meet groupoid} of $G$.

%
%

\cref{thm: Borel equivalence} applies to the class of locally Roelcke precompact, non-Archimedean groups, where  $G$ is locally Roelcke precompact if it has a Roelcke precompact open subgroup (see \cref{def:RP}).  This notion was     introduced in~\cite{Rosendal:21,Zielinski:21} for  the wider context of Polish groups.  We note that 
each t.d.l.c.\  group is   locally Roelcke precompact; in contrast,  the   group  $\Aut(T_\infty)$ of automorphisms of an infinitely branching {unrooted}  tree is locally R.p., without being Roelcke precompact or t.d.l.c.~\cite{Zielinski:21}. 

The following is easily checked.
\begin{prop} \label{cor: lRP}
	The class of locally R.p.\ groups $G$ satisfies the invariant countable basis condition,    taking as   $\+ S_G$    the class of R.p.\ open subgroups of $G$. 
\end{prop}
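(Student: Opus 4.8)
The plan is to verify the two requirements of the invariant countable basis condition (ICB) from \cref{def:ICB} for the class of locally Roelcke precompact (locally R.p.) groups, taking $\+ S_G$ to be the collection of Roelcke precompact (R.p.) open subgroups of $G$. There are two things to check: first, that $\+ S_G$ is a neighbourhood basis of $1_G$ consisting of open subgroups and that the assignment $G \mapsto \+ S_G$ is Borel; second, that the assignment is invariant under topological isomorphisms. I would also confirm at the outset that $\+ S_G$ is nonempty, which is exactly the defining property of local R.p.: $G$ has at least one R.p.\ open subgroup.

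First I would address the neighbourhood basis condition. Given a locally R.p.\ group $G$, fix one R.p.\ open subgroup $V$. Since $G$ is non-Archimedean, the open subgroups contained in $V$ form a neighbourhood basis of $1_G$. The key point is that every open subgroup $U$ with $U \le V$ is again R.p.: an open subgroup of finite index in an R.p.\ group is R.p., and more generally one checks that if $V$ has only finitely many double cosets with respect to each of its open subgroups, then the same holds for $U \le V$, since double cosets of $U$ refine those of $V$ and the refinement is finite by openness of $U$ in $V$. Hence the R.p.\ open subgroups contained in $V$ already form a neighbourhood basis of the identity, so $\+ S_G$ is a neighbourhood basis as required. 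For the Borel condition I would note that Roelcke precompactness of a given open subgroup (``each open subgroup has only finitely many double cosets'' per \cref{def:RP}) is an arithmetical condition on the codes in the standard Borel structure on $\+ U(\S)$, and that the relation picking out open subgroups of $G$ is Borel; combining these shows that $\{(G,U) : G \in \Gr \land U \in \+ S_G\}$ is Borel.

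Next I would check invariance. Let $h \colon G \to H$ be a topological isomorphism between locally R.p.\ groups. Since $h$ is a homeomorphism and a group isomorphism, it carries open subgroups to open subgroups bijectively, and it sends the double coset partition of $U$ in $G$ to the double coset partition of $h(U)$ in $H$. Consequently $U$ has finitely many double cosets if and only if $h(U)$ does, i.e.\ $U \in \+ S_G \lra h(U) \in \+ S_H$. This is immediate from the purely algebraic/topological characterisation of R.p.\ and requires no extra work. The main obstacle, if any, is the structural fact that R.p.\ passes to open subgroups inside a fixed R.p.\ open subgroup, which is what guarantees that $\+ S_G$ is genuinely a neighbourhood \emph{basis} rather than merely nonempty; once that is in hand everything else is routine, which is why the statement is labelled as easily checked.
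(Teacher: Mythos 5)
The paper itself gives no argument for this proposition (it is introduced only with ``The following is easily checked''), so there is no proof to compare against; judging your proposal on its own terms, it is mostly sound but has one genuine gap: you never verify that $\+ S_G$ is \emph{countable}, which is an explicit requirement of the invariant countable basis condition in \cref{def:ICB} (and is essential later in the paper, e.g.\ when a bijection $\rho_G \colon \omega \to \+ S_G^*$ is fixed, and when $\+ W(G)$ must be a countable structure). Countability is not automatic: a Polish non-Archimedean group can have uncountably many open subgroups --- for instance the discrete group $(\QQ,+)$ is locally R.p.\ (being discrete), and it has uncountably many subgroups, all of them open. What saves the statement is precisely the Roelcke precompactness of the members of $\+ S_G$: if $U$ is an R.p.\ open subgroup and $W \subseteq U$ is chosen from a fixed countable neighbourhood basis of open subgroups of $G$, then $U$ is the union of the double cosets $WxW$ with $x \in U$, and there are only finitely many of these because $W$ is an open subgroup of the R.p.\ group $U$; since for each such $W$ the family $\{WxW \colon x \in G\}$ consists of pairwise disjoint nonempty open sets and hence is countable by separability of $G$, every R.p.\ open subgroup is a finite union of sets drawn from a fixed countable collection, so there are only countably many of them. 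Without this (or some comparable) argument, the proposal does not establish ICB as defined.

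A secondary point: your justification of the key heredity fact --- that every open subgroup $U$ of an R.p.\ group $V$ is again R.p.\ --- is garbled as written. ``Double cosets of $U$ refine those of $V$'' is not the relevant statement (a refinement of a finite partition need not be finite, so refinement language proves nothing), and ``openness of $U$ in $V$'' is not what is used. The correct one-line argument is: given an open subgroup $W \le U$, note that $W$ is also an open subgroup of $V$, so $W\backslash V/W$ is finite since $V$ is R.p.; the $W$-double cosets lying in $U$ are among the $W$-double cosets of $V$, hence $W\backslash U/W$ is finite. With that repaired and countability added, your remaining steps (the neighbourhood basis claim via this heredity, the Borelness of the assignment, and invariance under topological isomorphism) are correct and at the level of detail the paper itself deems sufficient for this proposition.
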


Throughout this section, let $\Gr$ be a class of closed subgroups of $\S$ that satisfies  the invariant countable basis condition  as in \cref{def:ICB}.   
By $G$ we will always denote a group in $\Gr$.

\subsection{Full meet groupoids}
\begin{definition}
	\label{def groupoid} 
	A \emph{groupoid} consists of a domain $M$, a partial binary operation $\cdot$ and a unary operation ${(.)}^{-1}$ with the following properties for each $A,B,C\in M$: 
	\bi \item[(a)]   $(A \cdot B)\cdot C= 
	A \cdot (B\cdot C)$,  with either both sides or no side defined (and so the parentheses can be omitted in products);  \item[(b)]  $A\cdot A^{-1}$ and $A^{-1}\cdot A$ are always defined; \item[(c)] if $A\cdot B$ is defined then $A\cdot B\cdot B^{-1}=A$ and $A^{-1}\cdot A\cdot  B =B$.\ei 
	
	Given a   groupoid $M$, the letters $A,B, C$ will  range over  elements of $M $, and the letters  $U,V,W$ will range  over idempotents. 
	
\end{definition}
	The following goes back to  \cite{Melnikov.Nies:22} in the context of t.d.l.c.\ groups; also see ~\cite[Section~3]{LogicBlog:22}.

	\begin{definition} \label{def:MeetGroupoid} A   \emph{meet groupoid} is a groupoid  $(M, \cdot , {(.)}^{-1})$ that is also a meet semilattice  $(M, \cap  ,\ES)$ of which  $\ES$ is the  least element.    
		It satisfies the conditions  that  $\ES^{-1} = \ES = \ES \cdot \ES$,   that $\ES \cdot A$ and $A \cdot \ES$ are undefined for each $A \neq \ES$,  and that $U  \cap V \neq \ES$ for idempotents $U,V$ such that $U,V \neq \ES$.
		Further, writing $A \sub B \LR A\cap B = A$, it satisfies 
		
		\bi \item[(d)] $A \sub B \LR A^{-1} \sub B^{-1}$, and
		
		\item[(e)]   if  $A_i\cdot B_i$ are defined ($i= 0,1$) and $A_0 \cap A_1 \neq \ES \neq B_0 \cap B_1$, then \bc $(A_0  \cap A_1)\cdot (B_0 \cap B_1) =  A_0 \cdot  B_0 \cap A_1 \cdot B_1 $ \ec

		%
		\ei
	\end{definition}
\begin{remark} \label{rem: props}
		Since inversion is an order isomorphism,  if  $A \cap B \neq \ES$  then $A^{-1} \cap B^{-1}= (A \cap B)^{-1}$.  	Monotonicity of the groupoid product follows from (e):  
	
	\bi \item	[(f)] if  $A_i\cdot B_i$ are defined ($i= 0,1$) and $A_0 \sub A_1, B_0 \sub B_1 $,
	then  $A_0 \cdot B_0 \sub A_1 \cdot B_1$.  \ei
	Another consequence of (e) is that the intersection of two idempotents is again an idempotent.

\end{remark}

	
	Given meet groupoids $\+ W_0, \+ W_1$, a bijection $h \colon M_0 \to M_1$ is an \emph{isomorphism} if it preserves the three operations.

	\begin{definition} Suppose we are given a groupoid as in  Definition~\ref{def groupoid}. 
		Let $U$ be  an idempotent.  We say $A$ a \textit{left $U$ ${}^*$coset}  if  $A\cdot U=A$. We say $B$ a \textit{right  $U$ ${}^*$coset}  if  $U\cdot B=B$.  
		We write $\LC(U)$ and $\RC(U)$ for the collections of left and right $^*$cosets  \, of $U$, respectively. 
	\end{definition} 
	
\begin{remark} \label{rem:lrcoset inverses}
		Each $A$ is a left $U$ $^*$coset and right $V$ $^*$coset for unique \idempotents\ $U$ and $V$, by cancellation and \ref{def groupoid} (c). Further, $U = A \cdot A^{-1} $ and $V = A^{-1} \cdot A$.
\end{remark}

	\begin{definition} 
		\label{def:CosetGroupoid} 
		A \emph{full meet  groupoid} is a meet  groupoid  $(M, \cdot , {}^{-1}, \cap,\emptyset)$ that additionally satisfies the following conditions for all \idempotents\ $U \sqsubseteq V$: 
		
		
		
		\begin{enumerate} 
			\item[(g)] 
			\label{def:CosetGroupoid bi} 
			(level up) 
			If $A$ is a left (right) $U$ ${}^*$coset,  there exists a unique left (right) $V$ $^*$coset $B$ with $A\sqsubseteq B$. 
			
			\item[(h)] 
			\label{def:CosetGroupoid bii} 
			(level down) 
			Suppose that $B$ is a left (right) $V$ ${}^*$coset. 
			If $U   \sqsubset  V$,    there exist at least two distinct left (right) $U$ $^*$cosets\  $A\sqsubseteq B$. 
			
		\end{enumerate} 
	\end{definition}

	It follows from   \ref{def:CosetGroupoid}(g) that any two distinct left (right) $U$ $^*$cosets\ are disjoint. 
 
 \begin{fact}
{\rm In a full meet groupoid,  if  $A_i$ is a left  $U_i$ ${}^*$coset for $i=0,1$   and $A_0\cap A_1$ is nonempty, then $A_0\cap A_1$ is a left $U_0\cap U_1$  ${}^*$coset. A similar fact holds for right cosets. }
 \end{fact}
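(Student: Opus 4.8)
The plan is to reduce everything to axiom (e) of \cref{def:MeetGroupoid}. By definition, to say that $A_0 \cap A_1$ is a left $U_0 \cap U_1$ ${}^*$coset means precisely that $U_0 \cap U_1$ is an idempotent and that $(A_0 \cap A_1)\cdot(U_0 \cap U_1) = A_0 \cap A_1$. That $U_0 \cap U_1$ is an idempotent is already recorded as the last sentence of \cref{rem: props}, so the entire content of the Fact is the displayed product identity, and I would obtain it by instantiating (e) with $B_i := U_i$.

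First I would check that (e) is applicable. The products $A_i \cdot U_i = A_i$ are defined (that is exactly what ``left $U_i$ ${}^*$coset'' says), and $A_0 \cap A_1 \neq \ES$ is the hypothesis. The only nondegeneracy left to verify is $U_0 \cap U_1 \neq \ES$. For this I would argue that each $A_i \neq \ES$, since $A_0\cap A_1 \sub A_i$ and $A_0 \cap A_1 \neq \ES$; by \cref{rem:lrcoset inverses} we have $U_i = A_i \cdot A_i^{-1}$, and if $U_i$ were $\ES$ then $A_i = A_i \cdot U_i = A_i \cdot \ES$ would be undefined for $A_i \neq \ES$, a contradiction. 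Hence $U_0, U_1 \neq \ES$, and the meet-groupoid axiom guaranteeing $U \cap V \neq \ES$ for nonzero idempotents $U,V$ yields $U_0 \cap U_1 \neq \ES$.

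With the hypotheses in place, applying (e) to the two defined products $A_0 \cdot U_0$ and $A_1 \cdot U_1$ gives
\[
(A_0 \cap A_1)\cdot(U_0 \cap U_1) \;=\; A_0 \cdot U_0 \,\cap\, A_1 \cdot U_1 \;=\; A_0 \cap A_1,
\]
which is the required identity. The right-coset case is entirely symmetric: there $U_i \cdot A_i = A_i$, so I would instead apply (e) to the products $U_0 \cdot A_0$ and $U_1 \cdot A_1$ (with the idempotents now in the first coordinate), obtaining $(U_0 \cap U_1)\cdot(A_0 \cap A_1) = A_0 \cap A_1$.

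I do not expect any real obstacle here: the statement is a direct corollary of the distributivity-type axiom (e). The single point requiring care is the nondegeneracy condition $U_0 \cap U_1 \neq \ES$, needed so that (e) fires; everything else is a one-line substitution.
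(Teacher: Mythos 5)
Your proof is correct, and it reaches the Fact by a genuinely different (though closely related) route than the paper. Both arguments hinge on axiom (e) of \cref{def:MeetGroupoid}, but the paper instantiates it with the products $A_i^{-1}\cdot A_i$: using that inversion commutes with meets (\cref{rem: props}), it computes
$U_0\cap U_1 = (A_0^{-1}\cap A_1^{-1})\cdot(A_0\cap A_1) = (A_0\cap A_1)^{-1}\cdot(A_0\cap A_1)$,
and then concludes from \cref{rem:lrcoset inverses} that $A_0\cap A_1$ is a left ${}^*$coset of exactly this idempotent. You instead instantiate (e) with the defining products $A_i\cdot U_i = A_i$, verifying the coset identity $(A_0\cap A_1)\cdot(U_0\cap U_1)=A_0\cap A_1$ head-on. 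The trade-off: your route must separately check that $U_0\cap U_1$ is a nonzero idempotent --- which you do correctly, via the meet-groupoid axiom that nonzero idempotents have nonzero meet, and the last sentence of \cref{rem: props} --- whereas the paper gets this for free, since its computation exhibits $U_0\cap U_1$ as the canonical idempotent attached to $A_0\cap A_1$; in exchange, your argument never touches inverses or the compatibility of inversion with meets, so it is slightly more elementary. One cosmetic point: your appeal to the formula $U_i = A_i\cdot A_i^{-1}$ from \cref{rem:lrcoset inverses} is superfluous (and note that remark pairs the two idempotents the opposite way from what the paper's own proof of this Fact uses, so it is best avoided); your contradiction already follows from the definition $A_i\cdot U_i=A_i$ together with the axiom that $A_i\cdot\ES$ is undefined for $A_i\neq\ES$.
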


	\begin{proof}
Using \ref{def:MeetGroupoid} (e) we have 
	\begin{eqnarray*} U_0 \cap U_1 &=& (A_0^{-1}\cdot A_0) \cap (A_1^{-1}\cdot A_1) \\  &=& (A_0^{-1}\cap A_1^{-1})\cdot (A_0 \cap A_1) \\ 
		&=&  (A_0 \cap A_1)^{-1} \cdot (A_0 \cap A_1). \end{eqnarray*} So $A_0\cap A_1$ is a left $U_0\cap U_1$  ${}^*$coset by \cref{rem:lrcoset inverses}.
	\end{proof}
  In particular, letting $U=U_0= U_1$ and using uniqueness in \ref{def:CosetGroupoid}(g), distinct left (right) ${}^*$cosets of $U$ are disjoint.


\subsection{The functor  $\+ W$}
We  define a functor $\+ W$  from the category  of groups $\Gr $ with topological isomorphisms  to the category of  countable full meet groupoids with isomorphisms. 

\begin{definition}[Functor $\+ W$]\label{def: functorW} 
	Let $\+ W(G)$ be the collection of left cosets of subgroups  in $\+ S_G$, together with $\ES$  (so that it is closed under $\cap$).   Note that $\+ W(G)$ is a basis that  is  closed under inverse because $\+ S_G$ is closed under conjugation.  We write $A \cdot B = C$ if $A$ is left coset and $B$ is right coset of the same  subgroup, and $AB = C$. Clearly $\+ W(G)$ is closed under this operation. So  $\+ W(G) $ forms a groupoid.    We extend the groupoid operation to $\+ W(G)$ by letting $\ES \cdot \ES = \ES$, and $\ES \cdot A$ is undefined for $A \neq \ES$.  The structure  $(\+ W(G), \cdot, \cap) $ is called the  \emph{meet groupoid} of $G$. 
	If $\alpha \colon G \to H$ is a topological  isomorphism, we define $\+ W(\alpha)(A) = \{\alpha(g) \colon \, g \in A\}$ for any $A \in \+ W(G)$. 
\end{definition}

We  show that $\+ W(G)$ is indeed a full meet groupoid. Clearly \ref{def:MeetGroupoid}(d) and monotonicity \ref{def:MeetGroupoid}(f) hold.  The    condition to check   is \ref{def:MeetGroupoid}(e).  Let $A_i$ be a right coset of a subgroup $U_i$ and a left coset of subgroup $V_i$,  so that  $B_i$ is  a right coset of $V_i$ by hypothesis. Then  $A_0 \cap A_1 $ is a  right  coset of $U_0 \cap U_1$, and $B_0 \cap B_1$ a right coset of $V_0 \cap V_1$, so the left hand side in (e) is defined.   Clearly the left side is contained in the right hand side by monotonicity. The right hand side is also a right coset of $U_0 \cap U_1$, so they are   equal.

\begin{definition}[Full filters, \cite{Nies.Schlicht.etal:21}, Def.\ 2.4] \label{def:FF}
	A \textit{filter}  $R $ on  the partial  order $ (\+ W(G) , \sub)$ is  a subset that is directed downward and closed upward. It  is called \emph{full}  if each   subgroup  in  $\+ W(G)$ has a  left and a right coset in $R$.  (These cosets are necessarily unique.)
\end{definition}

\begin{lemma}  \label{lem:KechrisNies}
	There is a canonical bijection between the elements of  $G$ and   the set of full filters on $  \+ W(G)$. It is given by \bc $g \mapsto R_g :=  \{ gU \colon \, U \in \+ S_G\}$. \ec It inverse is given by $R \mapsto  g$ where $\{ g\}= \bigcap R$.  
\end{lemma}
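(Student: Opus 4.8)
The plan is to show that $g \mapsto R_g$ and $R \mapsto \bigcap R$ are mutually inverse, handling well-definedness, injectivity, and surjectivity in turn; the only substantive point is that a \emph{full} filter has nonempty intersection, and this is precisely where the completeness of $G$ and the fullness hypothesis both enter. First I would check that $R_g$ is a full filter. Downward directedness and upward closure follow from $gU \cap gV = g(U\cap V)$ together with closure of $\+ S_G$ under finite intersection, and from the observation that $gU \sub A$ forces $A = gV$, where $V \in \+ S_G$ is the subgroup of which $A$ is a left coset (since $g \in A$). Fullness uses closure of $\+ S_G$ under conjugation: for any $W \in \+ S_G$ the left $W$-coset $gW$ lies in $R_g$, while $g(g^{-1}Wg) = Wg \in R_g$ supplies a right $W$-coset, and $g^{-1}Wg \in \+ S_G$. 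Since each coset is nonempty and distinct left $W$-cosets are disjoint, this also forces $\ES \notin R_g$.

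Injectivity and the inverse formula come from the identity $\bigcap R_g = \{g\}$. Because $\+ S_G$ is a neighbourhood basis of $1_G$ and $G$ is Hausdorff, $\bigcap_{U \in \+ S_G} U = \{1\}$, so $\bigcap_{U} gU = g \bigcap_U U = \{g\}$; as $R_g = \{gU : U \in \+ S_G\}$, this gives $\bigcap R_g = \{g\}$. Hence $R_g$ determines $g$, and the candidate inverse returns $g$.

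For surjectivity, fix a cofinal decreasing sequence $U_0 \supseteq U_1 \supseteq \cdots$ in $\+ S_G$ (available since $\+ S_G$ is countable and closed under intersection, e.g.\ $U_n = V_0 \cap \dots \cap V_n$). Given a full filter $R$, let $A_n, B_n \in R$ be the unique left and right $U_n$-cosets; using the level-up property \cref{def:CosetGroupoid}(g) (and its right-coset analogue) together with upward closure, both sequences are decreasing. By downward directedness and $\ES \notin R$ we may pick $c_n \in A_n \cap B_n$, so that $A_n = c_n U_n$ and $B_n = U_n c_n$. For $m \ge n$ one then obtains \emph{simultaneously} $c_n^{-1} c_m \in U_n$ (from $c_m \in A_m \sub A_n$) and $c_m c_n^{-1} \in U_n$ (from $c_m \in B_m \sub B_n$). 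Realizing $G$ as a closed subgroup of $\S$ as in the introduction, and using $U_n \to \{1\}$: the first family of relations forces $c_n$ to converge pointwise, the second forces $c_n^{-1}$ to converge pointwise, and the two limits are mutually inverse, so $c_n \to g$ in $\S$ for some $g \in G$ (as $G$ is closed). Since each $A_n$ is clopen and $c_m \in A_n$ for all $m \ge n$, we get $g \in A_n$, whence $A_n = g U_n$ and $\bigcap_n A_n = \{g\}$. As the $A_n$ are cofinal downward in $R$, this yields $\bigcap R = \{g\}$. Finally $R_g \sub R$ by upward closure (each $gU$ contains some $A_n \in R$), and $R \sub R_g$ because any $A \in R$ is a left $V$-coset containing $g$, hence $A = gV$; so $R = R_g$.

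The main obstacle is exactly the nonemptiness of $\bigcap R$, and I would emphasize that fullness is essential rather than cosmetic: a decreasing chain of left cosets of open subgroups in $\S$ can have empty intersection (it may encode a non-surjective injection of $\NN$), so the left cosets alone only produce a pointwise limit that need not be a permutation. It is the right cosets $B_n$, controlling $c_n^{-1}$, that upgrade this one-sided limit to a genuine element of $\S$, and hence of the closed subgroup $G$. Everything else is a routine transcription of the groupoid axioms of \cref{def:CosetGroupoid} and the definition of full filter in \cref{def:FF}.
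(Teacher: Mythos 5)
Your proposal is correct and takes essentially the same route as the paper: the paper likewise verifies that $R_g$ is a full filter via conjugation-invariance of $\+ S_G$, isolates the nonemptiness of $\bigcap R$ as the main point, and delegates that step to the cited arguments of Kechris--Nies--Tent and Nies--Schlicht--Tent, which are exactly your two-sided convergence argument. Your write-up simply inlines what the paper outsources: the decreasing left cosets $A_n$ and right cosets $B_n$ give pointwise convergence of the representatives $c_n$ and of their inverses $c_n^{-1}$, and closedness of $G$ in $\S$ then yields the limit $g$ with $\bigcap R=\{g\}$.
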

\n  To verify this, first note that $R_g$ is indeed a full filter: given a subgroup $V \in \+ S_G$, let $U = g^{-1} Vg$; then $U \in \+ S_G$ by invariance. So $Vg = gU $ is a right coset of $V$ in  $R_g$. 
The main point is to show that   $\bigcap R$ is non-empty for each full filter~$R$.  This  is proved similar to  \cite[Claims 3.6 and 3.7]{Kechris.Nies.etal:18}; also see the proof of \cite[Prop.\ 2.13]{Nies.Schlicht.etal:21}. It is then easy to see that $\bigcap R$ is a singleton, using that $\+ B$ is a basis for the topology on $G$.

\subsection{The functor $\+ G$  on the category of full meet groupoids}
We next define a  functor  $\+ G$  from the category of countable full meet groupoids   (\cref{def:CosetGroupoid}) to the   category of non-Archimedean groups with topological isomorphism. Later on, we will restrict it to the category $\+ M$. We actually need the definition of  the functor~$\+ G$ on the larger category to define the Borel category $\+ M$: one of the conditions on a full meet groupoid $M$ for being an object  of  $\+M$  is that $\+ G(M) \in \Gr$, which is a Borel condition. 
\begin{definition}[Functor $\+ G$] \label{df: inverse functor}
	For a countable full meet groupoid $M$, let

	$\+ G(M)=\{ p \in \Sym(M) \colon \, p \text{ is a  $(M,\cap) $ automorphism } \land $

	\hfill  $ p(A \cdot B) = p(A ) \cdot B \text{ whenever }   A \cdot B \text{ is defined} \}$. 
	
\n 	If $\theta \colon M \to N$ is an isomorphism of meet groupoids, let $\+ G(\theta)(p)= \theta \circ p \circ \theta^{-1}$.  
\end{definition}

\begin{remark} \label{rem:expl G(M)}
	 Clearly, when $\theta \colon M \to N$ is an isomorphism then  $\+G(\theta)$ is an isomorphism $\+ G(M) \to \+G(N)$, with inverse  $\+G(\theta^{-1})$. 
	 
 $\+ G(M)$ is the set of automorphisms of the structure $M'$ with the binary function  $\cap$ and unary partial functions $f_B$ for each $B \in M$,  that send $A $ to $ A \cdot B$ when defined. It is thus a closed subgroup of   $\Sym(M)$.  
\end{remark}

\begin{remark} The elements of $\+G(M)$ correspond  to full filters (\cref{def:FF}),  as follows: \bi \item [(a)] Given $p\in  \+G(M)$ let $R = \{ pU \colon U\in \+ N_G\}$. \item[(b)] Given a full filter $R$, let $p(U) = A$ where $A$ is the left coset of $U$ in $R$; this determines the values on all cosets because $p(B) = p(U)\cdot B$ when $B$ is a right coset of $U$. \ei 
 \end{remark}

\subsection{Towards the homotopies of categories}
We aim to show that after suitably refining the category of full meet groupoids,  the functors $\+ W$ and $\+ G$ induce an equivalence of categories according to~\cref{def: equivalence categories}. For this we need to define two  homotopies: 
\begin{definition}\label{def:etaM}
	Given  a full meet groupoid $M$, define a  map $\eta_M  \colon M \to \+ W(\+ G(M))$ as follows. For  $A \in M - \{ \ES\}$ 
	let \bc $\eta_M(A):=\{ p\in \+ G(M)\colon  p(U)=A\}$ \ec  where $A$ is a left $U$ $^*$coset. Also  let $\eta_M(\emptyset):=\emptyset$.   \textit{ If $M$ is understood from the context we also write  $ \hat A$ for  $\eta_M(A)$. }
\end{definition}

The finite intersections of sets $\hat A$ form a basis for the topology  of $\+ G(M)$. This uses that for $p \in \+ G(M)$ and $C,D \in M$,  $p (C)= D$ is equivalent to $p(C)\cdot C ^{-1}= D\cdot C^{-1}$, or again $p(U) = A $ where $U = C\cdot C^{-1}$ and $A  = D \cdot C^{-1}$.

\begin{definition} \label{def: etaG} 
For $G \in \Gr$ define a map  $\eta_G \colon G \to \+ G(\+ W(G))$ 
	by  $\eta_G(g)(A)= gA$.
\end{definition}
\begin{lemma} \label{lem:eta isom}
	$\eta_G \colon G \to \+ G(\+ W(G))$ is a  topological isomorphism. 
\end{lemma}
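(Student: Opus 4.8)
The plan is to verify in turn that $\eta_G$ is a well-defined homomorphism into $\+ G(\+ W(G))$, that it is injective and surjective, and finally that it is bicontinuous. Write $M = \+ W(G)$. First I would check well-definedness: for $g \in G$, left translation $A \mapsto gA$ is a bijection of $M$ (its inverse is translation by $g^{-1}$), it fixes $\ES$ and commutes with $\cap$ since translation is a bijection of the underlying set $G$, and it satisfies the groupoid condition $g(A \cdot B) = (gA)\cdot B$ because the groupoid product on $\+ W(G)$ is the setwise product $AB$ (see \cref{def: functorW}) and $g(AB) = (gA)B$. Hence $\eta_G(g) \in \+ G(M)$ by \cref{df: inverse functor}. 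That $\eta_G$ is a homomorphism, $\eta_G(gh) = \eta_G(g)\circ \eta_G(h)$, is immediate from associativity of translation. Injectivity follows because $\eta_G(g) = \id$ forces $gU = U$ for every $U \in \+ S_G$, hence $g \in \bigcap \+ S_G = \{1\}$, using that $\+ S_G$ is a neighbourhood basis of the identity.

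The core of the argument, and the main obstacle, is surjectivity. Fix $p \in \+ G(M)$. Since $p(U \cdot U) = p(U) \cdot U$ and $U \cdot U = U$ for an idempotent $U$, the value $p(U)$ is again a left coset of $U$; and since every $A \neq \ES$ is a right coset of the idempotent $V := A\cdot A^{-1} \in \+ S_G$, the identity $p(A) = p(V\cdot A) = p(V)\cdot A$ shows that $p$ is completely determined by its values on idempotents. I would then consider $R := \{p(U) : U \in \+ S_G\}$ and argue that it is a full filter on $(\+ W(G), \sub)$ in the sense of \cref{def:FF}: it is downward directed because $p(U) \cap p(V) = p(U \cap V) \in R$ (using that the meet of idempotents is an idempotent by \cref{rem: props}, and that $\+ S_G$ may be taken closed under finite intersections); it is upward closed because $p(U) \sub B$ forces $U \sub p^{-1}(B)$ by injectivity and meet-preservation, and a subgroup contained in a coset equals that coset's subgroup, so $p^{-1}(B)$ is idempotent and $B \in R$; and it is full by the correspondence between elements of $\+ G(M)$ and full filters recorded in the remark following \cref{df: inverse functor}. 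By \cref{lem:KechrisNies} there is a unique $g \in G$ with $R = R_g = \{gU : U \in \+ S_G\}$; comparing the unique left coset of each $U$ in the two descriptions gives $p(U) = gU$. Finally, for arbitrary $A$ with idempotent $V = A\cdot A^{-1}$, one computes $p(A) = p(V)\cdot A = gV\cdot A = g(V\cdot A) = gA$, so $p = \eta_G(g)$.

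It remains to see that $\eta_G$ is a homeomorphism. Continuity is routine: the preimage under $\eta_G$ of a subbasic open set $\{p : p(A) = B\}$ of $\Sym(M)$ is $\{g : gA = B\}$, which is either empty or a left coset of the setwise stabiliser $\{g : gA = A\} = hUh^{-1}$ of the coset $A = hU$, hence open in $G$. For the inverse, I would note that $\eta_G$ maps each basic open subgroup $U \in \+ S_G$ onto $\eta_G(G) \cap \{p : p(U) = U\}$ and each coset of $U$ onto a translate thereof, which are relatively open in $\eta_G(G)$; thus $\eta_G$ is open. Alternatively, since $\eta_G$ is a continuous bijective homomorphism between Polish groups, one may invoke the open mapping theorem for Polish groups to conclude directly that it is a topological isomorphism. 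I expect the only genuinely delicate point to be the verification that $R$ is a full filter together with the appeal to \cref{lem:KechrisNies}, whose substance is the nonemptiness of $\bigcap R$; everything else is bookkeeping with cosets.
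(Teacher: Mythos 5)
Your proposal is correct and takes essentially the same route as the paper: both proofs recover a preimage of $p \in \+ G(\+ W(G))$ by forming the full filter $R = \{p(U) \colon U \in \+ S_G\}$ and invoking \cref{lem:KechrisNies}, both check continuity on sub-basic sets of the form $\{p \colon p(U) = A\}$ (whose $\eta_G$-preimage is the open set $A$ itself), and both conclude via Polishness of the two groups (the open mapping theorem). Your write-up simply makes explicit several steps the paper compresses into "clearly" — well-definedness, the filter axioms for $R$, and the fact that $p$ is determined by its values on idempotents.
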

\begin{proof}
	Clearly $\eta_G$  is a group monomorphism.  To show that it is onto, given $p \in \+ G(\+ W(G))$ let $R = \{ pU \colon  U \in \+ S_G\}$. Since $R$ is a full filter on $\+ W(G)$,   by \cref{lem:KechrisNies} there is a unique $g \in G $ such that $\bigcap R= \{ g\}$. Clearly $p = \eta_G(g)$. 
	
	To check $\eta_G$ is continuous, take a sub-basic subset  $\hat A$  of $\+ G(\+ W(G))$,  where $A \in \+W(G)$. Note that  $\eta_G^{-1}(\hat A) = A$ is open. 
	
	Given that both $G$ and $\+ G(\+ W(G))$ are Polish, this shows $\eta_G$ is a topological isomorphism. 
\end{proof}


The next lemmas will  show that the map  $\eta_M$ from \cref{def:etaM}  is an embedding  of meet groupoids.

Note that $\hat{U}$ is an open subgroup of $\+ G(M)$. 
Furthermore, for any left $U$ $^*$coset $A$, the set  $\hat{A}$ is a left $\hat{U}$ coset,  because $x\hat{U}=\hat{A}$ for any $x\in \hat{A}$. 
Similarly, $\hat{B}$ is a right $\hat{V}$ coset for every right $V$ $^*$coset $B$, since $x(B)=x(V\cdot B)=x(V)\cdot B=V\cdot B=B$ for any $x\in \hat{V}$ and hence $\hat{V}y=\hat{B}$ for any $y\in \hat{B}$.

We first  show that the  map $\eta_M$ sends $^*$cosets to open cosets of $\+ G(M)$. 
\begin{lemma} 
	\label{A hat nonempty} 
	$\hat{A}\neq\emptyset$ for any   $^*$coset $A \neq \ES$. 
\end{lemma}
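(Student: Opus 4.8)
The plan is to reduce the statement to the existence of a suitable full filter and then build that filter by hand. Recall the correspondence between $\+ G(M)$ and full filters on $M$ recorded just after \cref{df: inverse functor}: an element $p \in \+ G(M)$ amounts to a full filter $R$, with $p(V)$ equal to the unique left $^*$coset of the idempotent $V$ lying in $R$. Since $A$ is a left $U$ $^*$coset, a permutation $p$ with $p(U)=A$ is exactly one whose associated filter $R$ has $A$ as its left $U$ $^*$coset. Hence it suffices to produce a full filter $R$ on $M$ that contains $A$ as its left $U$ $^*$coset; feeding $R$ back through the correspondence then yields $p \in \hat A$, so $\hat A \neq \ES$.

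I would construct $R$ as the upward closure of a descending chain $A = A_0 \sqsupseteq A_1 \sqsupseteq \cdots$ defined by recursion. For $C \neq \ES$ write $\ell(C)$ and $r(C)$ for the unique idempotents of which $C$ is a left, respectively right, $^*$coset (\cref{rem:lrcoset inverses}); thus $\ell(A_0)=U$. Two facts drive the recursion. First, the nonzero idempotents are downward directed, since $V\cap W \neq \ES$ whenever $V,W \neq \ES$ (\cref{def:MeetGroupoid}). Second, both $\ell$ and $r$ are monotone along the chain: $A_{n+1}\sqsubseteq A_n$ forces $\ell(A_{n+1})\sqsubseteq \ell(A_n)$ and $r(A_{n+1})\sqsubseteq r(A_n)$, which follows from \cref{def:MeetGroupoid}(d) together with monotonicity of the groupoid product (\cref{rem: props}(f)). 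Enumerating the nonzero idempotents as $V_0,V_1,\ldots$, at stage $2i$ I use level-down (\cref{def:CosetGroupoid}(h)) to replace the left $\ell(A_{2i})$-coset $A_{2i}$ by a left $\bigl(\ell(A_{2i})\cap V_i\bigr)$-coset $A_{2i+1}\sqsubseteq A_{2i}$, the meet being nonzero by directedness; at stage $2i+1$ I symmetrically shrink the right idempotent below $V_i$. By monotonicity this guarantees $\ell(A_n)\sqsubseteq V_i$ for all large $n$, and likewise $r(A_n)\sqsubseteq V_i$.

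It then remains to verify that $R:=\{\,C\in M : A_n\sqsubseteq C \text{ for some } n\,\}$ is a full filter with the required properties. Upward closure is built in and downward directedness is immediate from the chain. For fullness, fix a nonzero idempotent $V=V_i$: for large $n$ we have $\ell(A_n)\sqsubseteq V$, so level-up (\cref{def:CosetGroupoid}(g)) yields a unique left $V$ $^*$coset above $A_n$, which lies in $R$; the symmetric argument using $r(A_n)\sqsubseteq V$ produces a right $V$ $^*$coset in $R$. Uniqueness of these cosets within $R$, and the fact that $A$ is precisely the left $U$ $^*$coset of $R$, both follow from the uniqueness clause of level-up applied to a sufficiently deep $A_n\sqsubseteq A$. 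The resulting $p\in\+ G(M)$ satisfies $p(U)=A$, completing the argument.

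The step I expect to be the crux is fullness on the right-hand side. Both the definition of $\hat A$ and the natural recursion are phrased through left $^*$cosets, so the danger is that $R$ contains left cosets of every idempotent but fails to contain right cosets. The resolution is exactly the symmetric (even/odd) treatment above, made legitimate by the monotonicity observation: shrinking on either side drives \emph{both} $\ell$ and $r$ downward, so a single descending chain can be arranged to be cofinal ``on both sides'' simultaneously, which is what fullness demands.
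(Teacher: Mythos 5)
Your proof is correct, and it differs from the paper's in ways worth recording. The paper proves \cref{A hat nonempty} directly: it splits into cases according to whether $M$ has a least nonzero idempotent, builds a descending chain of left $^*$cosets $A=A_{n_0}\sqsupseteq A_{n_1}\sqsupseteq\cdots$ using only the \emph{left} version of level-down, and realizes an element of $\hat{A}$ as the union of the partial automorphisms $B\mapsto A_V\cdot B$ determined by the chain, with a side condition $U_{n_{i+1}}\sqsubseteq A_{n_i}\cdot U_{n_i}\cdot A_{n_i}^{-1}$ intended to make the union surjective. You instead construct only the chain, pass to its upward closure, verify it is a full filter, and quote the filter-to-automorphism correspondence (the remark following \cref{df: inverse functor}) for the limit step. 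Two consequences of this choice. First, your even/odd bookkeeping --- shrinking $\ell(A_n)$ at even stages and $r(A_n)$ at odd stages via the right-coset version of \cref{def:CosetGroupoid}(h), with monotonicity of $\ell$ and $r$ coming from \cref{def:MeetGroupoid}(d) and \cref{rem: props}(f) --- is exactly what makes the filter full on \emph{both} sides, and it handles the paper's two cases uniformly. Your identification of right-fullness as the crux is on target: in the paper's one-sided construction the coset $A_{n_{i+1}}$ provided by level-down is an arbitrary choice, and the conjugation side condition by itself does not force the right idempotents $r(A_{n_i})$ to become cofinal among all idempotents (for instance, in the meet groupoid of $\S$ with finite pointwise stabilizers one can pick the cosets so that $r(A_{n_i})$ never gets below the stabilizer of $0$), so your two-sided argument is tighter than the paper's at precisely the step where the paper asserts that every right $^*$coset of $U_i$ lies in the range of $f_{i+1}$. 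Second, the one caveat: the direction of the cited remark that converts a full filter into an element of $\+ G(M)$ is stated in the paper without proof, and it is not free --- surjectivity of the induced permutation is exactly where right-fullness is consumed, and preservation of $\cap$ and of the partial products needs \cref{def:MeetGroupoid}(e). Since that remark precedes the lemma, citing it is legitimate within the paper's structure; but a fully self-contained write-up would have to include those verifications, which is the work that the paper's union-of-partial-maps argument is doing in their place.
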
 

\begin{proof} 
	First suppose that $M$ has a least idempotent $U\neq \ES$. 
	Suppose that $A$ is a left $V$ $^*$ coset. 
	Then $A$ contains a $U$ $^*$coset $B$ by \ref{def:CosetGroupoid}(h). 
	We define $f\in \G(M)$ as follows. 
	Suppose that $W$ is any \idempotent\ with $U\sqsubseteq W$. 
	There is a unique left $W$ $^*$coset $C$ with $A\sqsubseteq C$ by \ref{def:CosetGroupoid} (g). 
	Let $f(W)=C$ and $f(B)=C\cdot B$ for any right $W$ $^*$coset $B$. 
	Then $f\in \hat{B}\subseteq \hat{A}$. 
	
	Now suppose $M$ has   no least nonempty idempotent.  
	Suppose that $A$ is a left $U$ $^*$coset.  Let  $\langle U_n \mid n\in\NN\rangle$ enumerate all   \idempotents\ of $M$, such that $U_0=U$. 
	We construct a strictly increasing sequence $\langle n_i \mid i\in\NN\rangle$ and an increasing sequence $\langle f_i \mid i\in\NN\rangle$ of partial functions on $M$ whose union will be an element of $\hat{A}$.

	Let $n_0=0$ and suppose that $V$ is any \idempotent\ with $U_0\sqsubseteq V$. 
	There is a unique left $V$ $^*$coset $A$ with $A_0\sqsubseteq A$ by \ref{def:CosetGroupoid} (g). 
	Let    $f_0(B)=A \cdot B$ for any right $V$ $^*$coset $B$; in particular, $f_0(V)=A$. 
	Note that $f_0(B)$ is a right $A V A^{-1}$ coset and $f_0(VA^{-1})=A V A^{-1}$.

	If $f_i$ has been defined, let $n_{i+1}>n_i$ be least such that \bc $U_{n_{i+1}}\subseteq U_{n_i}\cap (A_{n_i} U_{n_i} A_{n_i}^{-1}) \cap U_j$ for all $j<n$.  \ec
	Note that $n_{i+1}$ exists since $H$ is not discrete. 
	There exists a left $U_{n_{i+1}}$ $^*$coset $A_{n_{i+1}}\sqsubseteq f(U_{n_i})$ by \ref{def:CosetGroupoid} (h). 
	Suppose that $V$ is any \idempotent\ with $U_{n_{i+1}}\sqsubseteq V$. 
	Let $f_{i+1}(V)$ be the unique left $V$ $^*$coset $A$ with $A_{n_{i+1}}\sqsubseteq A$ by \ref{def:CosetGroupoid}(g). 
	Let  $f_{i+1}(B)=A \cdot  B$ for any right $V$ $^*$coset $B$. 
	Note that $f_{i+1}(B)$ is a right $A V A^{-1}$ coset and $f_{i+1}(VA^{-1})=A V A^{-1}$. 
	It follows that every right $^*$coset of $U_{i+1}$ is in the domain and every right $^*$coset of $U_i$ in the range of $f_{n_{i+1}}$. 
	Since $f_i\subseteq f_{i+1}$ by construction and each $f_i$ is a partial automorphism of $M'$  in the sense of \cref{rem:expl G(M)}, their union is an automorphism of $M'$  and hence in $\hat{A}$. 
\end{proof}

We next  show that the  map $\eta_M$  is compatible with intersections, products and inverses. 
Let $\hat{A}\hat{B}$ denote the setwise product of $\hat{A}$ and $\hat{B}$ and $\hat{A}^{-1}$ the setwise inverse of $\hat{A}$ in $\G(M)$. 
Note that for any \idempotent\ $U$, $\hat{U}$ is an open subgroup of $\G(M)$.


\begin{lemma}
	\label{subset is correct} 
	The following hold for all $^*$cosets\ $A$ and $B$. 
	\begin{enumerate-(a)} 
		\item
		\label{subset is correct 1} 
		$\hat{A\wedge B}=\hat{A}\cap \hat{B}$. 
		
		\item 
		\label{subset is correct 2} 
		$\hat{A\cdot B}=\hat{A} \hat{B}$  if $A\cdot B$ is defined. 
		
		\item 
		\label{subset is correct 4} 
		$\hat {A^{-1}} = \hat{A}^{-1}$. 
		%
		%
	\end{enumerate-(a)} 
\end{lemma}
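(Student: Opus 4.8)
The plan is to show that the map $\eta_M\colon A\mapsto\hat A$ preserves each of the three meet-groupoid operations, handling \ref{subset is correct 1}, \ref{subset is correct 2} and \ref{subset is correct 4} in turn, using the facts already in place: for an \idempotent\ $U$ the set $\hat U$ is an open subgroup of $\+ G(M)$, for a left $U$ $^*$coset $A$ the set $\hat A$ is a left coset of $\hat U$ and for a right $U$ $^*$coset it is a right coset of $\hat U$; each $p\in\+ G(M)$ is a $\cap$-automorphism obeying the equivariance $p(X\cdot Y)=p(X)\cdot Y$ whenever $X\cdot Y$ is defined; and $\hat A\neq\ES$ for $A\neq\ES$ by \cref{A hat nonempty}. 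For \ref{subset is correct 1}, write $A$ as a left $U$ $^*$coset and $B$ as a left $V$ $^*$coset. The inclusion $\hat A\cap\hat B\subseteq\hat{A\wedge B}$ is immediate, since $p(U)=A$ and $p(V)=B$ give $p(U\cap V)=p(U)\cap p(V)=A\cap B$, and $A\cap B$ is a left $(U\cap V)$ $^*$coset by the Fact that the meet of two left $^*$cosets is a left $^*$coset of the meet of the \idempotents. If $A\cap B=\ES$ then $\hat A\cap\hat B=\ES$ as well, since a common $p$ would send the nonempty \idempotent\ $U\cap V$ to $\ES$, contradicting that $p$ is a bijection fixing $\ES$. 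The real content is the reverse inclusion when $A\cap B\neq\ES$: given $p(U\cap V)=A\cap B$, both $A$ and $p(U)$ are left $U$ $^*$cosets containing the left $(U\cap V)$ $^*$coset $A\cap B$ (for $p(U)$ this uses that $p$ preserves $\sub$), so they agree by the uniqueness in level up, \cref{def:CosetGroupoid}(g); symmetrically $p(V)=B$, whence $p\in\hat A\cap\hat B$.

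For \ref{subset is correct 2}, suppose $A\cdot B$ is defined, let $W$ be the \idempotent\ with $B$ a left $W$ $^*$coset, and $V$ the \idempotent\ with $B$ a right $V$ $^*$coset; then $A$ is a left $V$ $^*$coset and $A\cdot B$ is a left $W$ $^*$coset. The inclusion $\hat A\hat B\subseteq\hat{A\cdot B}$ is a direct computation: for $p\in\hat A$ and $q\in\hat B$ one has $(pq)(W)=p(q(W))=p(B)=p(V\cdot B)=p(V)\cdot B=A\cdot B$. For equality I will squeeze between cosets of $\hat W$. Fixing $p_0\in\hat A$ and $q\in\hat B$ (possible by \cref{A hat nonempty}), the set $p_0\hat B=p_0 q\,\hat W$ is a full left $\hat W$ coset sitting inside $\hat A\hat B$, while $\hat A\hat B\subseteq\hat{A\cdot B}$ and the latter is itself a single left $\hat W$ coset. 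Since two left cosets of $\hat W$ are equal or disjoint, $p_0 q\,\hat W=\hat{A\cdot B}$, forcing $\hat A\hat B=\hat{A\cdot B}$.

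For \ref{subset is correct 4}, let $V$ be the \idempotent\ with $A$ a left $V$ $^*$coset, so that $A^{-1}$ is a right $V$ $^*$coset, and let $U$ be the \idempotent\ with $A^{-1}$ a left $U$ $^*$coset, equivalently $A$ a right $U$ $^*$coset. Then $\hat{A^{-1}}$ is a left coset of $\hat U$ by the definition of $\eta_M$, while $\hat A^{-1}$ is a left coset of $\hat U$ because $\hat A$ is a right coset of $\hat U$; so it suffices to exhibit one common element. Choosing any $q\in\hat A$, so $q(V)=A$, the equivariance yields $q(A^{-1})=q(V\cdot A^{-1})=q(V)\cdot A^{-1}=A\cdot A^{-1}=U$, hence $q^{-1}(U)=A^{-1}$, i.e.\ $q^{-1}\in\hat{A^{-1}}$; as trivially $q^{-1}\in\hat A^{-1}$, the two cosets coincide.

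I expect the forward inclusions to be routine equivariance computations, and the genuine obstacles to lie in the reverse directions of \ref{subset is correct 1} and \ref{subset is correct 2}. In \ref{subset is correct 1} the difficulty is to promote the single identity $p(U\cap V)=A\cap B$ to the pair $p(U)=A$, $p(V)=B$, and this is exactly where the uniqueness clause of level up is indispensable. In \ref{subset is correct 2} the setwise product $\hat A\hat B$ is a priori only known to be contained in a coset, so the crux is to certify that it fills the whole coset; the coset-squeeze together with nonemptiness (\cref{A hat nonempty}) is the device that achieves this. A secondary but pervasive concern is to keep the left/right $^*$coset bookkeeping and the associated \idempotents\ consistent at every step.
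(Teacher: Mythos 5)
Your proof is correct, and part \ref{subset is correct 1} coincides with the paper's argument: the same case split on whether $A\wedge B=\ES$, the same use of the Fact that $A\wedge B$ is a left $U\wedge V$ ${}^*$coset, uniqueness in \ref{def:CosetGroupoid}(g) to promote $p(U\wedge V)=A\wedge B$ to $p(U)=A$ and $p(V)=B$, and the $\wedge$-automorphism property for the converse. Parts \ref{subset is correct 2} and \ref{subset is correct 4} take genuinely different (equally valid) routes for the harder inclusions. For \ref{subset is correct 2}, the paper proves $\hat{A\cdot B}\subseteq \hat{A}\hat{B}$ by direct factorization: given $z\in\hat{A\cdot B}$, pick $x\in\hat{A}$ (nonempty by \cref{A hat nonempty}) and check, using equivariance of $x^{-1}$, that $y:=x^{-1}z$ lies in $\hat{B}$, so $z=xy$. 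Your coset-squeeze instead observes that $p_0q\hat{W}\subseteq\hat{A}\hat{B}\subseteq\hat{A\cdot B}$ and that $\hat{A\cdot B}$ is itself a single left $\hat{W}$ coset, so the rigidity of cosets of an open subgroup (equal or disjoint) forces equality; this leans more heavily on the previously established fact that $\eta_M$ sends left ${}^*$cosets to left cosets, but avoids any element-chasing in the reverse direction. For \ref{subset is correct 4}, the paper recycles part \ref{subset is correct 2} via $\hat{A^{-1}}\cdot\hat{A}=\hat{A^{-1}\cdot A}=\hat{U}$, whereas you avoid \ref{subset is correct 2} entirely by exhibiting $q^{-1}$ (for any $q\in\hat{A}$) as a common element of the two left $\hat{U}$ cosets $\hat{A^{-1}}$ and $\hat{A}^{-1}$, again invoking coset rigidity. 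Both routes consume the same ingredients --- \cref{A hat nonempty}, equivariance, and the coset facts stated before the lemma --- with the paper's version of \ref{subset is correct 4} marginally more economical (it reuses \ref{subset is correct 2}) and yours more self-contained; your left/right bookkeeping and the identification of the relevant idempotents ($V=A^{-1}\cdot A$ versus $U=A\cdot A^{-1}$, and $W=B^{-1}\cdot B$) are all consistent with the paper's conventions.
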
 
\begin{proof} 
	\ref{subset is correct 1} 
	Suppose that $A$ is a left $U$ $^*$coset and $B$ is a left $V$ ${}^*$coset. 
	If $A\wedge B= \ES$, then $\hat A \cap \hat B= \ES$. 
	To see this, note that for any $x\in \hat{A}$ and $y\in \hat{B}$, we have $x(U\wedge V)\neq y(U\wedge V)$, since $x$ and $y$ preserve order. 
	Now suppose that $A\wedge B\neq  \ES$. 
	We argued after Definition \ref{def:CosetGroupoid} that $A\wedge B$ is a left $U\wedge V$ ${}^*$coset. 
	By uniqueness in \ref{def:CosetGroupoid} (g), every element of $\hat{A\wedge B}$ is an element of both $\hat{A}$ and $\hat{B}$. 
	The converse holds, since every element of $\G(M)$ is an automorphism  of the structure  $(M, \wedge)$. 
	
	\ref{subset is correct 2} 
	Suppose that $z\in \hat{A\cdot B}$, where $A$ and $B$ are as above. 
	Then $z(V)=A\cdot B$. 
	Pick any $x\in \hat{A}$ and let $y:=x^{-1}z$. 
	Then $y(V)=x^{-1}z(V)=x^{-1}(A\cdot B)=U\cdot B=B$. 
	Conversely, suppose that $x\in \hat{A}$ and $y\in \hat{B}$. 
	Then $xy(V) = x(B) = x(U\cdot B) =  x(U)\cdot B = A\cdot B$ and hence $xy\in \hat{A\cdot B}$. 
	
	\ref{subset is correct 4} 
	Note that $\hat{A}$ is a left $\hat{U}$ coset, since $x\hat{U}=\hat{A}$ for any $x\in \hat{A}$. 
	Furthermore, $A^{-1}$ is a right $U$ $^*$coset by \ref{def groupoid}, so $\hat{A^{-1}}$ is a right $\hat{U}$ $^*$coset. 
	The claim holds since $\hat{A^{-1}} \cdot \hat{A}=\hat{A^{-1} \cdot A} =\hat{U}$ by \ref{subset is correct 2}. 
\end{proof}

\begin{lemma}\label{lem:eta inj} 
	The map $\eta_M$ is injective and preserves the  order  in both directions.  
\end{lemma}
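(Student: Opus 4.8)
The plan is to deduce everything from order-reflection. Since $M$ is a partial order under $\subseteq$ (hence antisymmetric) and $\eta_M$ is order-preserving in the forward direction, once I know that $\eta_M$ also reflects the order it follows that $\hat A = \hat B$ forces $A \subseteq B$ and $B \subseteq A$, i.e.\ $A = B$, giving injectivity for free. So it suffices to establish $A \subseteq B \iff \hat A \subseteq \hat B$, where $\subseteq$ on the left is the meet order on $M$ and on the right is ordinary inclusion in $\+ W(\+ G(M))$. The forward implication is immediate from \ref{subset is correct 1}: if $A \subseteq B$ then $A \cap B = A$, so $\hat A = \widehat{A \cap B} = \hat A \cap \hat B \subseteq \hat B$.

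For the reverse implication I would assume $\hat A \subseteq \hat B$ and reduce to an equality of $^*$cosets. The case $A = \emptyset$ is trivial, so I take $A \neq \emptyset$; then $\hat A \neq \emptyset$ by \cref{A hat nonempty}, forcing $B \neq \emptyset$. Setting $C := A \cap B$, \ref{subset is correct 1} together with $\hat A \subseteq \hat B$ gives $\hat C = \hat A \cap \hat B = \hat A$, so $C \neq \emptyset$ and the whole statement reduces to showing $C = A$. Writing $C$ as a left $W$-coset and $A$ as a left $U$-coset, monotonicity of the groupoid product (\cref{rem: props}(f)) combined with \ref{def:MeetGroupoid}(d) yields $W = C^{-1}\cdot C \subseteq A^{-1}\cdot A = U$, while \ref{subset is correct 2} and \ref{subset is correct 4} convert $\hat C = \hat A$ into $\hat W = \hat C^{-1}\hat C = \hat A^{-1}\hat A = \hat U$.

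The crux — and the step I expect to be the main obstacle — is to upgrade $\hat W = \hat U$ (with $W \subseteq U$) to the genuine equality $W = U$ in $M$; this is really injectivity of $\eta_M$ on idempotents. I would argue by contradiction using level-down. If $W \sqsubset U$, then \ref{def:CosetGroupoid}(h) produces two distinct left $W$-cosets $C_1, C_2 \subseteq A$; distinct $^*$cosets of the same idempotent are disjoint, so $C_1 \cap C_2 = \emptyset$, whence $\hat{C_1} \cap \hat{C_2} = \widehat{C_1 \cap C_2} = \emptyset$ by \ref{subset is correct 1}, even though both are nonempty by \cref{A hat nonempty}. But each $\hat{C_i}$ is a left $\hat W$-coset contained in $\hat A$ (forward direction), and because $\hat W = \hat U$ while $\hat A$ is itself a left $\hat U$-coset, the partition of the group $\+ G(M)$ into cosets of $\hat U$ forces $\hat{C_1} = \hat A = \hat{C_2}$, contradicting disjointness. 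Hence $W = U$; then $C \subseteq A$ are left cosets of the same idempotent $U$ with $C$ nonempty, so disjointness of distinct $U$-cosets gives $C = A$, i.e.\ $A \cap B = A$ and $A \subseteq B$. The only delicate bookkeeping is keeping track of which idempotent each $\hat A$ is a coset of (recorded just before \cref{A hat nonempty}) and transporting "distinct $^*$cosets are disjoint" across $\eta_M$ via \ref{subset is correct 1}.
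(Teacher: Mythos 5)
Your proposal is correct, but it organizes the argument in the opposite direction from the paper and runs a different core step. The paper observes that, by meet-preservation (Lemma~\ref{subset is correct}~\ref{subset is correct 1}), two-way order preservation follows formally once injectivity is known, and then proves injectivity directly: if $A \neq B$, say $A \not\sqsubseteq B$, and $A \wedge B$ is a left $U$ ${}^*$coset, then level-down \ref{def:CosetGroupoid}(h) yields a left $U$ ${}^*$coset $D \sqsubseteq A$ disjoint from $A \wedge B$ and hence from $B$; since $\hat D \subseteq \hat A$, $\hat D \cap \hat B = \emptyset$, and $\hat D \neq \emptyset$ by \cref{A hat nonempty}, this separates $\hat A$ from $\hat B$ in one stroke. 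You reduce the other way (injectivity from order-reflection via antisymmetry), and your crux is injectivity of $\eta_M$ on idempotents: assuming $W \sqsubset U$ with $\hat W = \hat U$, level-down gives two disjoint left $W$ ${}^*$cosets inside $A$, and the partition of the group $\+ G(M)$ into left cosets of the subgroup $\hat U = \hat W$ forces both of their images to coincide with $\hat A$, a contradiction. Both routes rest on the same two pillars, \ref{def:CosetGroupoid}(h) and \cref{A hat nonempty}, but the paper's witness argument is shorter and needs only meet-preservation and monotonicity, whereas yours additionally invokes the fact (recorded just before \cref{A hat nonempty}) that $\eta_M$ sends ${}^*$cosets to genuine group-theoretic cosets in $\+ G(M)$. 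What your route buys in exchange: the explicit intermediate fact that $\eta_M$ is injective on idempotents, and more careful handling of the empty cases (your verification that $C \neq \emptyset$ before applying level-down), a point the paper's proof passes over silently.
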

\begin{proof}
	By   Lemma \ref{subset is correct} \ref{subset is correct 1} it suffices to show that the map is injective. Suppose that $A\neq B$ 
	and assume that $A\not\sqsubseteq B$, so that $A\wedge B \neq A$. 
	If $A\wedge B$ is a $U$ ${}^*$coset then by \ref{def:CosetGroupoid}(h), $A$ contains a left $U$ $^*$coset $D$ disjoint from $A\wedge B$ and thus from $B$. 
	Then $\hat{D}\cap \hat{B}=\emptyset$ and $\hat{D}\subseteq \hat{A}$ by \ref{def:CosetGroupoid} (g), so that $\hat{A}\neq\hat{B}$. 
\end{proof}
\begin{lemma}\ 
	\label{correct form of left cosets} 
	\begin{enumerate-(a)} 
		\item 
		\label{correct form of left cosets 1} 
		For any left (right) $U$ $^*$coset $A$, $\hat{A}$ is a left (right) $\hat{U}$ coset in $\G(M)$. 
		
		\item 
		\label{correct form of left cosets 2} 
		For any left $\hat{U}$ coset $g\hat{U}$ in $\G(M)$, there exists a left $U$ $^*$coset $A$ such that $\hat{A}=g\hat{U}$. 
		A similar fact holds for right cosets. 
	\end{enumerate-(a)} 
\end{lemma}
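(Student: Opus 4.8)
The plan is to read off both parts from the single observation that, for a nonempty idempotent $U$, the set $\hat U=\{p\in\+ G(M)\colon p(U)=U\}$ is exactly the stabiliser subgroup of $U$ under the natural action of $\+ G(M)$ on $M$, and that the map $p\mapsto p(U)$ sends the left cosets of $\hat U$ bijectively onto the sets $\hat A$ attached to left $U$ ${}^*$cosets $A$. Part \ref{correct form of left cosets 1} then amounts to saying this correspondence carries ${}^*$cosets to cosets, and part \ref{correct form of left cosets 2} to saying every coset is hit. In both parts the left-coset half is the substantive case, and the right-coset half I would obtain by transporting it through the inversion identity $\hat{A^{-1}}=\hat A^{-1}$ of part \ref{subset is correct 4} of \cref{subset is correct}.

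For \ref{correct form of left cosets 1}, let $A$ be a left $U$ ${}^*$coset, so by \cref{def:etaM} we have $\hat A=\{p\colon p(U)=A\}$. By \cref{A hat nonempty} I may fix some $x\in\hat A$, i.e.\ $x(U)=A$, and then show $\hat A=x\hat U$: if $q(U)=U$ then $(xq)(U)=x(U)=A$, so $xq\in\hat A$; conversely if $p(U)=A=x(U)$ then $(x^{-1}p)(U)=x^{-1}(A)=U$, whence $p\in x\hat U$. Since $\hat U$ is an open subgroup of $\+ G(M)$, this exhibits $\hat A$ as a left $\hat U$ coset. For a right $U$ ${}^*$coset $A$, I would note that $A^{-1}$ is a left $U$ ${}^*$coset by \cref{def groupoid} (and $U^{-1}=U$), so $\hat{A^{-1}}=x\hat U$ for some $x$ by the case just treated; then $\hat A=(\hat{A^{-1}})^{-1}=\hat U x^{-1}$ by part \ref{subset is correct 4} of \cref{subset is correct} together with $\hat U^{-1}=\hat U$, so $\hat A$ is a right $\hat U$ coset.

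For \ref{correct form of left cosets 2}, given a left $\hat U$ coset $g\hat U$ with $g\in\+ G(M)$, the candidate is $A:=g(U)$. First I would verify that $A$ is a left $U$ ${}^*$coset: since $U$ is idempotent, $U\cdot U=U$, so the defining equivariance property of $\+ G(M)$ in \cref{df: inverse functor} gives $A\cdot U=g(U)\cdot U=g(U\cdot U)=g(U)=A$; note also $A\neq\ES$ because $g$ fixes the least element, so $\hat A$ is defined. Then $\hat A=\{p\colon p(U)=g(U)\}$, and exactly as in \ref{correct form of left cosets 1} this equals $g\hat U$: for $q\in\hat U$ we get $(gq)(U)=g(U)=A$, while any $p$ with $p(U)=g(U)$ satisfies $(g^{-1}p)(U)=U$, i.e.\ $p\in g\hat U$. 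The right-coset statement follows by the same inversion argument used in \ref{correct form of left cosets 1}.

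The arguments are short once the bookkeeping is fixed, and I do not expect a serious obstacle; the one genuinely new computation is $g(U)\cdot U=g(U)$, which is precisely where the equivariance property $p(X\cdot Y)=p(X)\cdot Y$ of elements of $\+ G(M)$ meets the idempotency of $U$. The only point requiring care is the left/right asymmetry built into \cref{def:etaM}, which is phrased via left ${}^*$cosets only; this is why the right-coset halves of both parts are cleanest when deduced from the left-coset halves through $\hat{A^{-1}}=\hat A^{-1}$ rather than reproved directly.
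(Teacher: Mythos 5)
Your proof is correct, and for the left-coset halves it is essentially the paper's argument: the paper, too, proves part \ref{correct form of left cosets 1} by observing that $x\hat{U}=\hat{A}$ for any $x\in\hat{A}$ (this is stated in the text just before \cref{A hat nonempty} and re-invoked in the proof of \cref{subset is correct}\ref{subset is correct 4}, to which the paper's proof of part \ref{correct form of left cosets 1} simply refers back). Where you genuinely diverge is in the right-coset halves and in part \ref{correct form of left cosets 2}. The paper treats right ${}^*$cosets directly: for $x\in\hat{V}$ and a right $V$ ${}^*$coset $B$ one computes $x(B)=x(V\cdot B)=x(V)\cdot B=B$, whence $\hat{V}y=\hat{B}$ for any $y\in\hat{B}$. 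You instead transport the left case through the identity $\hat{A^{-1}}=\hat{A}^{-1}$ of \cref{subset is correct}\ref{subset is correct 4}. This is formally legitimate, since \cref{subset is correct} precedes the lemma; but be aware of a hidden wrinkle: the paper's own proof of \cref{subset is correct}\ref{subset is correct 4} already uses the fact that $\hat{A^{-1}}$ is a right $\hat{U}$ coset, i.e.\ exactly the right-coset half of part \ref{correct form of left cosets 1}. So your derivation is non-circular only because the paper establishes that fact independently beforehand; as a self-contained replacement for the paper's development it would be circular, and the one-line direct computation above is the safer choice. On the other hand, in part \ref{correct form of left cosets 2} your write-up is more complete than the paper's: the paper's proof never names the witness (it begins ``suppose $f\in\hat{A}$'' without saying what $A$ is), whereas you exhibit $A:=g(U)$, check $A\cdot U=g(U\cdot U)=g(U)=A$ using equivariance of elements of $\+ G(M)$ and idempotency of $U$, and note $A\neq\ES$; your explicit appeal to \cref{A hat nonempty} for nonemptiness of $\hat{A}$ in part \ref{correct form of left cosets 1} likewise supplies a detail the paper glosses over.
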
 
\begin{proof} 
	\ref{correct form of left cosets 1} This was shown in the proof of Lemma \ref{subset is correct} \ref{subset is correct 4}. 
	
	\ref{correct form of left cosets 2} 
	Suppose that $f\in\hat{A}$. 
	We claim that $f\hat{V}= \hat{A}$. 
	We have $f\hat{V}\subseteq \hat{A}\hat{V}=\hat{A}$, since $\hat{A}$ is a left coset of $\hat{V}$ by \ref{correct form of left cosets 1}. 
	To see that $\hat{A}\subseteq f\hat{V}$, let $g\in \hat{A}$. 
	Since $f,g\in \hat{A}$, $f^{-1}g\in \hat{A}^{-1} \hat A=\hat{A^{-1}} \hat A\sqsubseteq \hat{V}$ by Lemma \ref{subset is correct} \ref{subset is correct 2} and \ref{subset is correct 4}. 
	Hence $g \in f\hat{V}$. 
\end{proof} 
In particular, a $^*$coset $A$ is a left $U$ $^*$coset if and only if $\hat{A}$ is a left $\hat{U}$ coset. 
A similar fact holds for right $^*$cosets.

\subsection{The category $\+ M$}

\begin{definition} \label{def:MM}
Let  $\+ M$ be the category that has as  objects    the full meet groupoids $M$ with domain $\omega$ such that  \bc $\+ G(M)\in \Gr$ and 
	$\forall \+ U \in \+ S_{\+ G(M)} \exists U \in M \   [ \hat U = \+ U]$. \ec  As before, the morphisms are the isomorphisms of meet groupoids.   
\end{definition}

\begin{lemma}
	For an object $M$ of $\+ M$, the map $\eta_M$ is an isomorphism of meet groupoids.
\end{lemma}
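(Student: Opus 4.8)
The plan is to check directly that $\eta_M$ meets the three requirements of a meet groupoid isomorphism---preservation of $\cap$, $\cdot$ and ${}^{-1}$, injectivity, and surjectivity onto $\+W(\+G(M))$---drawing almost everything from the lemmas already in place. Preservation of the three operations is exactly \cref{subset is correct}, parts \ref{subset is correct 1}, \ref{subset is correct 2} and \ref{subset is correct 4}; injectivity, indeed order-preservation in both directions, is \cref{lem:eta inj}; and by \cref{A hat nonempty} no nonempty ${}^*$coset is sent to $\ES=\eta_M(\ES)$. Thus the whole argument reduces to showing that $\eta_M$ is a bijection of $M$ onto the groupoid $\+W(\+G(M))$.

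For the bijection I would first settle the idempotents and then transport the result to arbitrary ${}^*$cosets. The idempotents of $\+W(\+G(M))$ are precisely the subgroups in $\+S_{\+G(M)}$, while $\eta_M$ sends an idempotent $U$ to the open subgroup $\hat U$; so the key claim is that $U\mapsto\hat U$ is a bijection from the idempotents of $M$ onto $\+S_{\+G(M)}$. Surjectivity here is immediate from the defining condition of $\+M$ in \cref{def:MM}: every $\+U\in\+S_{\+G(M)}$ has the form $\hat U$, such a $U$ is idempotent (since $\id\in\hat U$ forces $U$ to be a left $U$ ${}^*$coset), and it is unique by \cref{lem:eta inj}. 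Granting the reverse inclusion $\{\hat U: U\text{ idempotent}\}\subseteq\+S_{\+G(M)}$, the bijection on all of $M$ follows: given a nonempty element $g\+U$ of $\+W(\+G(M))$, write $\+U=\hat U$ and apply \cref{correct form of left cosets}, part~\ref{correct form of left cosets 2}, to produce a left $U$ ${}^*$coset $A$ with $\hat A=g\hat U$, so that $g\+U=\eta_M(A)$; conversely, for a left $U$ ${}^*$coset $A\neq\ES$, part~\ref{correct form of left cosets 1} of the same lemma shows $\hat A$ is a left $\hat U$ coset, and $\hat U\in\+S_{\+G(M)}$ places it in $\+W(\+G(M))$. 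Together with the preserved operations this exhibits $\eta_M$ as an isomorphism of meet groupoids.

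The step I expect to carry the weight is exactly this reverse inclusion $\{\hat U:U\text{ idempotent}\}\subseteq\+S_{\+G(M)}$, equivalently the assertion that $\+S_{\+G(M)}$ consists of precisely the stabiliser subgroups $\hat U$. The inclusion $\supseteq$ is the hypothesis of \cref{def:MM}; the content is to rule out $M$ carrying ``surplus'' idempotents $U$ whose open subgroups $\hat U$ are not selected by the basis assignment, since such a $U$ would have $\eta_M(U)=\hat U\notin\+W(\+G(M))$. Here I would use that the finite intersections of the sets $\hat A$ form a basis of $\+G(M)$ and that $\hat U\cap\hat V=\hat{U\cap V}$ with $U\cap V$ again idempotent, so that $\{\hat U:U\text{ idempotent}\}$ is \emph{itself} a neighbourhood basis of $1_{\+G(M)}$ consisting of open subgroups; the task is then to identify this basis with $\+S_{\+G(M)}$ using the invariance built into the invariant countable basis condition (\cref{def:ICB}) and the topological isomorphism $\eta_{\+G(M)}$ of \cref{lem:eta isom}. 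Making this coincidence precise---so that no idempotent of $M$ escapes $\+S_{\+G(M)}$---is the delicate point; once it is in place, everything else is a direct appeal to the preceding lemmas.
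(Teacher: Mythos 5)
Your decomposition coincides with the paper's own proof: preservation of the operations is Lemma \ref{subset is correct}, injectivity is \cref{lem:eta inj}, and surjectivity onto $\+ W(\+ G(M))$ comes from the condition in \cref{def:MM} together with \cref{correct form of left cosets}\ref{correct form of left cosets 2}, exactly as you argue. What you add is the explicit observation that one must also check that $\eta_M$ maps \emph{into} $\+ W(\+ G(M))$, i.e.\ that $\hat U \in \+ S_{\+ G(M)}$ for \emph{every} idempotent $U$ of $M$. The paper's two-line proof passes over this silently (calling $\eta_M$ an ``embedding'' already presupposes it), so you have correctly isolated the one step not covered by the earlier lemmas.

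That step, however, cannot be closed by the route you sketch, and in fact it is not provable from \cref{def:MM} as literally stated. The invariance in \cref{def:ICB} only says that topological isomorphisms between groups in $\Gr$ carry the distinguished bases $\+ S_G$ to one another; it does not force an arbitrary isomorphism-invariant basis of open subgroups to coincide with the distinguished one, which is what identifying $\{\hat U \colon U \text{ an idempotent of } M\}$ with $\+ S_{\+ G(M)}$ would require. Concretely, let $\Gr$ be the class of infinite compact nA subgroups of $\S$ (a Borel, conjugation-invariant class), with $\+ S_G$ the open subgroups of index at least $4$ (a Borel, isomorphism-invariant assignment satisfying ICB). Let $G$ be a copy of the $2$-adic integers $\ZZ_2$ and let $M$ be the full meet groupoid of \emph{all} cosets of \emph{all} open subgroups of $G$. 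Then $\+ G(M) \cong G$ lies in $\Gr$, and every $\+ U \in \+ S_{\+ G(M)}$ equals $\hat U$ for $U = 2^k \ZZ_2$ with $k \ge 2$, so $M$ satisfies the displayed condition of \cref{def:MM}; yet the idempotents $\ZZ_2$ and $2\ZZ_2$ of $M$ are sent by $\eta_M$ to open subgroups of index $1$ and $2$, which are not in $\+ S_{\+ G(M)}$. Hence $\eta_M$ does not map $M$ into $\+ W(\+ G(M))$; indeed $M \not\cong \+ W(\+ G(M))$ at all, since $M$ has a greatest element while $\+ W(\+ G(M))$ has none. The statement (and the paper's proof, and yours) becomes correct once the condition in \cref{def:MM} is read as an equality: the sets $\hat U$, for $U$ ranging over the idempotents of $M$, are \emph{exactly} the members of $\+ S_{\+ G(M)}$. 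Under that reading well-definedness is immediate, the strengthened condition still holds for every $\+ W(G)$ with $G \in \Gr$ (by ICB invariance transported along the isomorphism $\eta_G$ of \cref{lem:eta isom}), and the rest of your argument goes through verbatim.
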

\begin{proof}
	  \cref{lem:eta inj}   showed that $\eta_M$ is an embedding for each full meet groupoid $M$. Now   by the definition of the objects of $\+ M$,  \cref{correct form of left cosets}\ref{correct form of left cosets 2}  shows that $\eta_M$ is onto when $M $ is in $\+ M$.
\end{proof}
\begin{lemma} \label{lem:Borel hat}
	The operation  sending a full meet groupoid $M$ with domain $\omega$ to the map $A \mapsto \hat A$ (as an element of  $\+ F(\S)^\omega$) is Borel.
\end{lemma}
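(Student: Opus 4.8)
The plan is to work coordinatewise and to reduce the statement to the Effros Borel structure, identifying membership in basic Effros sets with a finite partial-automorphism condition that is manifestly Borel in $M$. The domain is the Borel space of structures $(M,\cdot,{}^{-1},\cap,\ES)$ on $\omega$ that are full meet groupoids (the axioms of \cref{def:MeetGroupoid} and \cref{def:CosetGroupoid} are Borel). Since the Borel structure on $\+ F(\S)^\omega$ is the product structure, it suffices to fix $A\in\omega$ and show that $M\mapsto \eta_M(A)=\hat A$ is Borel into $\+ F(\S)$. The set $\{M : A=\ES^M\}$ is Borel and there $\hat A=\ES$, so I may work on its complement. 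Recall the Effros Borel structure on $\+ F(\S)$ is generated by the sets $\{F : F\cap V\neq\ES\}$ for $V$ in a basis of $\S$; a basis is given by the clopen sets $V_s=\{p\in\S : s\sub p\}$ for finite partial injections $s$. Hence (by \cite[12.B]{Kechris:95}) it suffices to prove that for each such $s$ the set $\{M : \hat A\cap V_s\neq\ES\}$ is Borel.

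Write $M'$ for the structure of \cref{rem:expl G(M)}, so that $\+ G(M)=\Aut(M')$, and let $U=U(M,A)$ be the unique idempotent for which $A$ is a left $U$ $^*$coset (a Borel function of $M$ by \cref{rem:lrcoset inverses}), so that $\hat A=\{p\in\+ G(M) : p(U)=A\}$ by \cref{def:etaM}. Setting $t=s\cup\{(U,A)\}$, I claim
\[ \hat A\cap V_s\neq\ES \iff t \text{ is a finite partial automorphism of } M'. \]
The forward direction is immediate: any $p\in\hat A\cap V_s$ is an automorphism of $M'$ extending both $s$ and the pair $(U,A)$, so $t\sub p$ is a finite partial automorphism (in particular $t$ is a well-defined injection). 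For the converse I would invoke a homogeneity property of full meet groupoids.

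The key step, and the main obstacle, is this extension property: \emph{every finite partial automorphism of $M'$ extends to a total automorphism, i.e.\ to an element of $\+ G(M)$}. Granting it, a finite partial automorphism $t$ extends to some $p\in\+ G(M)$ with $p(U)=A$ and $s\sub t\sub p$, whence $p\in\hat A\cap V_s$. To prove the extension property I would run a back-and-forth argument that is a two-sided elaboration of the construction in the proof of \cref{A hat nonempty}: given a finite partial automorphism and a new element $c$ to add to its domain (or range), one uses \cref{def:CosetGroupoid}(g) (level up) to locate the $^*$coset of $c$ at each idempotent above the current working idempotent, and \cref{def:CosetGroupoid}(h) (level down) together with \cref{def:MeetGroupoid}(e) to find, below, a $^*$coset realizing the same intersection pattern as $c$ over the current finite domain; disjointness of distinct $^*$cosets of a fixed idempotent (noted after \cref{def:CosetGroupoid}) keeps the map injective. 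Alternating the roles of domain and range and exhausting $M=\omega$ yields the required total automorphism. (This argument in passing also shows $M\mapsto\+ G(M)$ is Borel.)

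Finally, for fixed $s$ the right-hand side of the displayed equivalence is Borel in $M$. The statement ``$t$ is a finite partial automorphism of $M'$'' is a Boolean combination of atomic statements of the form $a\cap b=c$, $a\cdot b=c$, $a^{-1}=b$ and $a=\ES$ whose parameters range over the finitely many coordinates of $s$ together with $U=U(M,A)$ and $A$. Since $U(M,A)$ is a Borel function of $M$ with values in $\omega$, I split into the countably many Borel cases $U(M,A)=u$; in each case $t$ is a fixed finite partial injection and the condition becomes a finite Boolean combination of atomic facts about $M$, hence clopen in $M$. Taking the countable union over $u$, intersecting with $\{A\neq\ES^M\}$, and adjoining the Borel piece $\{A=\ES^M\}$ (on which $\hat A=\ES$), shows that $\{M : \hat A\cap V_s\neq\ES\}$ is Borel, which completes the proof.
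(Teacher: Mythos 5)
Your reduction to the Borelness of $\{M : \hat A\cap V_s\neq\ES\}$ for basic clopen $V_s\sub\S$ is a legitimate route, and it is different from the paper's proof, which simply observes that the graph of $(M,A)\mapsto \hat A$ is Borel (via $p\in\hat A$ iff $p\in\+G(M)$ and $p(U)=A$) and invokes the fact that a function between standard Borel spaces with Borel graph is Borel. However, your key step fails, in two ways. First, a definitional slip: elements of $\+ G(M)$ satisfy $p(X\cdot B)=p(X)\cdot B$, so they preserve each relation $x\cdot B=y$ with \emph{fixed} index $B$, but they do not in general preserve the ternary relation $a\cdot b=c$ (both arguments moved) nor the relation $a^{-1}=b$ appearing in your list of atomic statements; so even your ``immediate'' forward direction is incorrect as stated. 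Second, and fatally, the extension property is false even for the correct relational notion (preservation of $\cap$ and of all relations $x\cdot B=y$, $B\in M$). Counterexample: let $G=\S$, let $\+ S_G$ be the pointwise stabilizers of finite sets, and $M=\+ W(G)$; identify each element of $M$ with a finite partial injection $r$ of $\NN$ via $[r]=\{g\in\S : r\sub g\}$, so that by \cref{lem:eta isom} every element of $\+ G(M)$ has the form $[r]\mapsto[g\circ r]$ for some $g\in\S$. Take $U=A=[\id_{\{0,1\}}]$ and let $s$ consist of the single pair $b=[\{0\mapsto 1\}]\mapsto c=[\{0\mapsto 2\}]$. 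Then $t=s\cup\{(U,A)\}$ preserves every atomic fact among its domain and range: $b$ and $c$ are left ${}^*$cosets of the same idempotent $[\id_{\{0\}}]$, $U\cap b=\ES=U\cap c$, no subset relations hold on either side, and no instances of the product relations connect $U$ to $b$ or $A$ to $c$. Yet $\hat A\cap V_s=\ES$, since $g\cdot U=U$ forces $g(1)=1$ while $g\cdot b=c$ forces $g(1)=2$. The obstruction is carried by elements \emph{outside} $\dom t$, namely the ``translator'' cosets $A\cdot U^{-1}=U$ and $c\cdot b^{-1}=[\{1\mapsto 2\}]$, which are disjoint; no back-and-forth argument can repair this, because the hypothesis of your extension lemma is genuinely too weak.

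The criterion you should prove instead is: $\hat A\cap V_s\neq\ES$ iff, writing $t=s\cup\{(U,A)\}$, one has $t(x)^{-1}\cdot t(x)=x^{-1}\cdot x$ for every $x\in\dom t$, and $\bigcap_{x\in\dom t}\, t(x)\cdot x^{-1}\neq\ES$. Necessity: if $p\in\+G(M)$ extends $t$, then $p(x\cdot x^{-1})=p(x)\cdot x^{-1}=t(x)\cdot x^{-1}$, and since $p$ preserves $\cap$ and fixes $\ES$, the displayed meet equals $p\bigl(\bigcap_x x\cdot x^{-1}\bigr)\neq\ES$. Sufficiency: the nonempty meet $C$ of the translators is a left ${}^*$coset of $W=\bigcap_x x\cdot x^{-1}$ by the Fact following \cref{def:CosetGroupoid}; by \cref{A hat nonempty} choose $p\in\+G(M)$ with $p(W)=C$; then for each $x$ both $p(x\cdot x^{-1})$ and $t(x)\cdot x^{-1}$ are left $(x\cdot x^{-1})$ ${}^*$cosets above $C$, hence equal by uniqueness in \cref{def:CosetGroupoid}(g), and therefore $p(x)=p(x\cdot x^{-1})\cdot x=t(x)\cdot x^{-1}\cdot x=t(x)$, so $p\in\hat A\cap V_s$. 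This criterion is a Boolean combination of atomic facts about finitely many elements of $M$ computed from $s$, $U$ and $A$ by the groupoid operations, so your final Borelness bookkeeping then goes through essentially verbatim.
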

\begin{proof}
	It suffices to show that the map sending a pair $(M, A)$,  where $A \in M$, to $\hat A$ has  Borel graph. This is evident because $A$ is a left coset of $U= A^{-1} \cdot A$, and $p \in \hat A$ iff $p(U)= A$.
\end{proof}
By the hypothesis on the class $\Gr$ and \cref{lem:Borel hat}, the objects of  $\+ M$ form  a Borel set.   So $\+ M$ is a Borel category.

\subsection{Proof of \cref{thm: Borel equivalence} } 
\begin{lemma}
	If $G \in \Gr$ then $\+ W(G) $ is an object of $ \+ M$. 
\end{lemma}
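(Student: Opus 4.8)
The plan is to verify, for $M := \+ W(G)$ (whose countable domain we fix in bijection with $\omega$, as required for objecthood in $\+ M$), the conditions of \cref{def:MM}: that $M$ is a full meet groupoid, that $\+ G(M) \in \Gr$, and that every member of $\+ S_{\+ G(M)}$ has the form $\hat U$ for some idempotent $U \in M$.

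First I would dispatch fullness. The meet-groupoid axioms of \cref{def:MeetGroupoid} were already checked for $\+ W(G)$ following \cref{def: functorW}, using that $\+ S_G$ is closed under finite intersections (assumed without loss of generality) so that $\cap$ makes $M$ a meet semilattice. The level-up and level-down conditions of \cref{def:CosetGroupoid} then read off the coset structure directly: for subgroups $U \sqsubseteq V$ in $\+ S_G$ and a left coset $gU$, the set $gV$ is the unique left $V$-coset containing it; and if $U \sqsubset V$ then $[V:U]\ge 2$, so $gV$ contains at least two distinct left $U$-cosets. Hence $M$ is a full meet groupoid.

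Second, for the membership $\+ G(M)\in\Gr$, I would invoke \cref{lem:eta isom}: the map $\eta_G\colon G \to \+ G(\+ W(G))$ given by $\eta_G(g)(A)=gA$ is a topological isomorphism, while by \cref{rem:expl G(M)} the group $\+ G(M)$ is a closed subgroup of $\Sym(M)\cong\S$. Since $G\in\Gr$ and $\Gr$ is invariant under topological isomorphism (implicit already in \cref{def:ICB}, where $\+ S_H$ must be defined for any $H$ isomorphic to a group in $\Gr$), we conclude $\+ G(M)\in\Gr$. The only step needing a genuine computation is the realization condition, and its crux is the identity $\hat V=\eta_G(V)$ for every $V\in\+ S_G$ regarded as an idempotent of $M$: indeed $\eta_G(g)\in\hat V$ iff $\eta_G(g)(V)=V$ iff $gV=V$ iff $g\in V$, and surjectivity of $\eta_G$ upgrades this to $\hat V=\{\eta_G(g):g\in V\}=\eta_G(V)$. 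Applying the invariance of the basis assignment under $\eta_G$ (\cref{def:ICB}) then yields $\+ S_{\+ G(M)}=\{\eta_G(V):V\in\+ S_G\}=\{\hat V:V\in\+ S_G\}$, so each member of $\+ S_{\+ G(M)}$ is $\hat U$ with $U=V\in M$ an idempotent, as required.

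The main obstacle is this last step: checking that the neighbourhood basis of $\+ G(M)$ is transported from that of $G$ by $\eta_G$ in a way that matches exactly the sets $\hat V$. This is where the topological-isomorphism invariance of the class $\Gr$ (not merely its closure under conjugation) is essential, both for $\+ G(M)\in\Gr$ and for identifying $\+ S_{\+ G(M)}$. The fullness verification and the identity $\hat V=\eta_G(V)$ are otherwise routine.
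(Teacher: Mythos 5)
Your proof is correct and follows essentially the same route as the paper: both establish $\+ G(\+ W(G)) \in \Gr$ via the topological isomorphism $\eta_G$ of \cref{lem:eta isom} together with isomorphism invariance of $\Gr$, and both obtain the realization condition by transporting $\+ S_G$ along $\eta_G$ using the invariance of the basis assignment. The only differences are that you spell out the crux identity $\hat V=\eta_G(V)$, which the paper asserts without computation ("Then $\hat U = \+ U$ as required"), and you re-verify fullness of $\+ W(G)$, which the paper treats as already established in the discussion following \cref{def: functorW}.
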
 

\begin{proof}
	Let $M = \+ W(G)$. For the first condition in \cref{def:MM}, note that we have $\+ G(M) \in \Gr $ by \cref{lem:eta isom} and since the class $\Gr$ is  isomorphism invariant. For the second condition, let $\+ U \in \+ S_{\+ G(M)}$. Since $\eta_G \colon G \to \+ G(M)$ is a topological  isomorphism and the assignment of  $H \in \Gr$ to $\+ S_H$ is isomorphism  invariant, we have $U: = \eta_G^{-1}(\+ U) \in \+ S_G$, so that $U \in M$. Then $\hat U = \+ U$ as required. 
\end{proof}

Let $\+ G'$ be the functor $\+ G$   restricted to the Borel category $\+ M$. 
We  will show that the diagrams in \cref{def: equivalence categories} are commutative for the functors $\Upsilon = \+ G' \circ \+ W$ and $\Upsilon = \+ W \circ \+ G'$ using the assignments $G \to \eta_G$, and $M \to \eta_M$, respectively, and that the maps $\eta_G$ and $\eta_M$ are isomorphisms  in their respective categories $\Gr$ and $\+ M'$  (recall here that all morphisms in the categories we consider  are  isomorphisms). 

\begin{lemma} (a) $\+ G ' \circ \+ W \sim 1_{\Gr}$ via $G \mapsto \eta_G$

	(b) $\+ W \circ \+ G' \sim 1_{\+M }$ via  $M \mapsto \eta_M$. \end{lemma}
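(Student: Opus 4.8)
The plan is to check, for each of (a) and (b), the two requirements in \cref{def: equivalence categories}(a): that the designated maps are isomorphisms in the relevant category, and that the naturality square commutes for every morphism. The isomorphism requirement is already available. For (a), $\eta_G \colon G \to \+ G'(\+ W(G))$ is a topological isomorphism by \cref{lem:eta isom}, noting that $\+ W(G)$ is an object of $\+ M$ (by the lemma that $\+ W(G) \in \+ M$ for $G \in \Gr$), so that $\+ G'$ coincides with $\+ G$ on it and $\eta_G$ lands in the correct codomain. For (b), the preceding lemma shows that $\eta_M \colon M \to \+ W(\+ G'(M))$ is an isomorphism of meet groupoids for each object $M$ of $\+ M$. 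Thus in both parts only the commutativity of the square remains, and each is a direct unwinding of the three functor definitions.

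For (a), I would fix a topological isomorphism $\alpha \colon G \to H$ and set $\Upsilon = \+ G' \circ \+ W$, so that $\Upsilon(\alpha) = \+ G(\+ W(\alpha))$ since $\+ W(G), \+ W(H)$ are objects of $\+ M$. I would verify $\eta_H \circ \alpha = \Upsilon(\alpha) \circ \eta_G$ by evaluating both maps at an arbitrary $g \in G$ and an arbitrary coset $B \in \+ W(H)$. The left-hand side gives $\eta_H(\alpha(g))(B) = \alpha(g)B$. For the right-hand side, recall $\+ G(\+ W(\alpha))(p) = \+ W(\alpha) \circ p \circ \+ W(\alpha)^{-1}$ and $\+ W(\alpha)^{-1} = \+ W(\alpha^{-1})$, so that, writing $\alpha^{-1}(B)$ for the image coset $\+ W(\alpha^{-1})(B)$,
\[ \Upsilon(\alpha)(\eta_G(g))(B) = \+ W(\alpha)\bigl( \eta_G(g)(\alpha^{-1}(B)) \bigr) = \alpha\bigl( g\,\alpha^{-1}(B) \bigr) = \alpha(g)\,B, \]
using that $\alpha$ is a group homomorphism. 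The two sides agree, so the square commutes.

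For (b), I would fix an isomorphism $\theta \colon M \to N$ of meet groupoids and set $\Upsilon = \+ W \circ \+ G'$, and verify $\eta_N \circ \theta = \Upsilon(\theta) \circ \eta_M$ on a $^*$coset $A \in M$, say a left $U$ ${}^*$coset. Since $\theta$ preserves the groupoid operations, $\theta(A)$ is a left $\theta(U)$ ${}^*$coset, so the left-hand side is $\eta_N(\theta(A)) = \widehat{\theta(A)} = \{ q \in \+ G(N) \colon q(\theta(U)) = \theta(A)\}$, while the right-hand side is $\+ W(\+ G'(\theta))(\widehat A) = \{ \theta \circ p \circ \theta^{-1} \colon p \in \+ G(M),\ p(U) = A\}$. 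A double inclusion finishes the argument: if $p(U) = A$ then $(\theta p \theta^{-1})(\theta(U)) = \theta(p(U)) = \theta(A)$; conversely, given $q$ with $q(\theta(U)) = \theta(A)$, the element $p = \theta^{-1} \circ q \circ \theta = \+ G'(\theta^{-1})(q)$ lies in $\+ G(M)$ and satisfies $p(U) = A$ and $q = \theta p \theta^{-1}$.

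The conceptual content here is small; the work is purely bookkeeping, so I expect the only real obstacle to be keeping the three functor definitions aligned — in particular, confirming that $\+ G'$ genuinely coincides with $\+ G$ on the objects in play (so that $\eta_G, \eta_M$ have the correct codomains) and that $\theta(A)$ is a ${}^*$coset of $\theta(U)$ in (b). Finally, for the Borel conclusion of \cref{thm: Borel equivalence}, one also records that the assignments witnessing the homotopies are Borel: $G \mapsto \eta_G$ is Borel directly from $\eta_G(g)(A) = gA$, and $M \mapsto \eta_M$ is Borel by \cref{lem:Borel hat}.
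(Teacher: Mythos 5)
Your proposal is correct and takes essentially the same approach as the paper's own proof: the isomorphism requirements are discharged by citing \cref{lem:eta isom} and the surrounding lemmas exactly as the paper does, and the two naturality squares are verified by the very same pointwise computations ($\alpha(g)C$ on both sides in (a), and the identification of $\eta_N(\theta(A))$ with $\{\theta \circ p \circ \theta^{-1} \colon p \in \eta_M(A)\}$ in (b)). If anything, your write-up is slightly more careful than the paper's, since you make explicit that $\theta(A)$ is a left $\theta(U)$ ${}^*$coset, that $\+ W(\alpha)^{-1} = \+ W(\alpha^{-1})$, and that the witnessing assignments $G \mapsto \eta_G$, $M \mapsto \eta_M$ are Borel.
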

\begin{proof}
	(a) Write $\Upsilon = \+ G' \circ \+ W$. Clearly $\eta_G \colon G \to \Upsilon(G)$ is in the category $\Gr$.  Let $\alpha \colon G \to H$ be a morphism in $\Gr$.  Note that by \cref{df: inverse functor} we have $\Upsilon(\alpha)(p)= \+ W(\alpha) \circ p \circ \+ W(\alpha)^{-1}$ (recall that $\+ W(\alpha) $ is given by  the action of $\alpha$ on open cosets of $G$).  We need to  verify the commutativity of the diagram    in \cref{def: equivalence categories}, which here is
	
	  \[ \xymatrix { G  \ar^\alpha @ {->}  [r]  \ar^{\eta_G} @ {->}  [d] & H\ar^{\eta_H} @ {->}  [d] \\ 
		\Upsilon(G)  \ar^{\Upsilon(\alpha)} @ {->}[r]   & \Upsilon(H)}.\]
We need to show,   for each $g\in G$, the equality of two maps on $\+ W(H)$ depending on~$g$:
	\[ \Upsilon(\alpha)(\eta_G(g))= \eta_H(\alpha(g)).\]
	Applying the map on the right hand side  to $C \in \+ W(H)$ yields $\alpha(g)C$ by Definition \ref{def: etaG} of $\eta_H$. 
	Applying the map on the left hand side yields $\+ W(\alpha)( g ( \+ W(\alpha)^{-1}(C))= \alpha(g \alpha^{-1}(C)) = \alpha(g)C$ by Definition \ref{def: functorW} of the functor $\+ W$. 
	
\n 	(b) Now write  $\Upsilon = \+ W \circ \+ G'$. By the second condition in \cref{def:MM},  $\eta_M \colon M \to \Upsilon(M)$ in the category $\+ M$. Let $\theta \colon M \to N$ be a morphism in $\+ M$.  
	For commutativity of the diagram
	  \[ \xymatrix { M  \ar^\theta @ {->}  [r]  \ar^{\eta_M} @ {->}  [d] & N\ar^{\eta_N} @ {->}  [d] \\ 
		\Upsilon(M)  \ar^{\Upsilon(\theta)} @ {->}[r]   & \Upsilon(N)}\] 
	we need, for each $A \in M$, the equality of two open subsets of $\Upsilon(N)$:   \[ \eta_N(\theta(A))= \Upsilon(\theta)(\hat A).\]

	Suppose $A$ is a left ${ }^*$coset of $U$. Let $q \in \Upsilon(N)$. Note that  $q$ is in the left hand side if $q(\theta(U)) = \theta(A)$ by Definition \ref{def:etaM}. 
	For the right hand side, write $\hat A=\eta_M(A)$. Note that by \cref{df: inverse functor} we have
	\[ \Upsilon(\theta)(\hat A)= \{ \+ G'(\theta) (p) \colon p \in \hat A\}= \{ \theta \circ p \circ \theta^{-1} \colon p \in \hat A\}.\] 
	Therefore, $q$ is in the right hand side if $p(U) = A$ where $p \in \Upsilon(M)$ is given as  $p= \theta^{-1} \circ q \circ \theta$. 
	  Since $\theta$ is an isomorphism, this also says that $q(\theta(U)) = \theta(A)$. So the two sides are equal as sets.
\end{proof}

	
%

 \def\cprime{$'$} \def\cprime{$'$}
 \begin{bibdiv}
 	\begin{biblist}
 		
 		\bib{Ahlbrandt.Ziegler:86}{article}{
 			author={Ahlbrandt, G.},
 			author={Ziegler, M.},
 			title={Quasi finitely axiomatizable totally categorical theories},
 			date={1986},
 			journal={Annals of Pure and Applied Logic},
 			volume={30},
 			number={1},
 			pages={63\ndash 82},
 		}
 		
 		\bib{Becker.Kechris:96}{book}{
 			author={Becker, H.},
 			author={Kechris, A.},
 			title={The descriptive set theory of polish group actions},
 			publisher={Cambridge University Press},
 			date={1996},
 			volume={232},
 		}
 		
 		\bib{Bergfalk.Lupini.ea:2024}{article}{
 			author={Bergfalk, J.},
 			author={Lupini, M.},
 			author={Panagiotopoulos, A.},
 			title={The definable content of homological invariantsÂ i: {Ext}
 				andÂ limÂ¹},
 			language={English},
 			date={2024},
 			ISSN={0024-6115},
 			journal={Proceedings of the London Mathematical Society},
 			volume={129},
 			number={3},
 			pages={e12631},
 			note={Open access},
 		}
 		
 		\bib{LogicBlog:22}{unpublished}{
 			author={(editor), A.~Nies},
 			title={Logic {B}log 2022},
 			date={2022},
 			note={Available at \url{https://arxiv.org/pdf/2302.11853.pdf}},
 		}
 		
 		\bib{Farb.Margalit:11}{book}{
 			author={Farb, B.},
 			author={Margalit, D.},
 			title={A primer on mapping class groups},
 			publisher={Princeton {U}niversity {P}ress},
 			date={2011},
 			volume={49},
 		}
 		
 		\bib{Gao:09}{book}{
 			author={Gao, Su},
 			title={Invariant descriptive set theory},
 			series={Pure and Applied Mathematics (Boca Raton)},
 			publisher={CRC Press},
 			address={Boca Raton, FL},
 			date={2009},
 			volume={293},
 			ISBN={978-1-58488-793-5},
 			review={\MR{2455198 (2011b:03001)}},
 		}
 		
 		\bib{Kechris:95}{book}{
 			author={Kechris, A.~S.},
 			title={Classical descriptive set theory},
 			publisher={Springer-Verlag New York},
 			date={1995},
 			volume={156},
 		}
 		
 		\bib{Kechris.Nies.etal:18}{article}{
 			author={Kechris, A.~S.},
 			author={Nies, A.},
 			author={Tent, K.},
 			title={The complexity of topological group isomorphism},
 			date={2018},
 			journal={The Journal of Symbolic Logic},
 			volume={83},
 			number={3},
 			pages={1190\ndash 1203},
 		}
 		
 		\bib{Maclane:98}{book}{
 			author={Mac~Lane, S.},
 			title={Categories for the working mathematician},
 			publisher={Springer Science \& Business Media},
 			date={1998},
 			volume={5},
 		}
 		
 		\bib{Melnikov.Nies:22}{unpublished}{
 			author={Melnikov, A.},
 			author={Nies, A.},
 			title={Computably totally disconnected locally compact groups (full
 				version)},
 			date={2022},
 			note={Available at \url{arxiv.org/pdf/2204.09878.pdf}.},
 		}
 		
 		\bib{Montalban.Nies:13}{article}{
 			author={Montalb{\'a}n, A.},
 			author={Nies, A.},
 			title={Borel structures: a brief survey},
 			date={2013},
 			journal={Lecture Notes in Logic},
 			volume={41},
 			pages={124\ndash 134},
 		}
 		
 		\bib{Nies.Paolini:24}{article}{
 			author={Nies, A.},
 			author={Paolini, G.},
 			title={Oligomorphic groups, their automorphism groups, and the
 				complexity of their isomorphism},
 			date={2024},
 			journal={arXiv preprint arXiv:2410.02248},
 			url={arxiv.org/pdf/2410.02248.pdf},
 		}
 		
 		\bib{Nies.Schlicht.etal:21}{article}{
 			author={Nies, A.},
 			author={Schlicht, P.},
 			author={Tent, K.},
 			title={Coarse groups, and the isomorphism problem for oligomorphic
 				groups},
 			date={2021},
 			journal={Journal of Mathematical Logic},
 			pages={2150029},
 		}
 		
 		\bib{Rosendal:21}{book}{
 			author={Rosendal, C.},
 			title={Coarse geometry of topological groups},
 			series={Cambridge Tracts in Mathematics},
 			publisher={Cambridge University Press},
 			date={2021},
 		}
 		
 		\bib{Tent.Ziegler:12}{book}{
 			author={Tent, K.},
 			author={Ziegler, M.},
 			title={A course in model theory},
 			publisher={Cambridge University Press},
 			date={2012},
 			number={40},
 		}
 		
 		\bib{Tsankov:12}{article}{
 			author={Tsankov, T.},
 			title={Unitary representations of oligomorphic groups},
 			date={2012},
 			journal={Geometric and Functional Analysis},
 			volume={22},
 			number={2},
 			pages={528\ndash 555},
 		}
 		
 		\bib{Zielinski:21}{article}{
 			author={Zielinski, J.},
 			title={Locally {R}oelcke precompact {P}olish groups},
 			date={2021},
 			journal={Groups, Geometry, and Dynamics},
 			volume={15},
 			number={4},
 			pages={1175\ndash 1196},
 		}
 		
 	\end{biblist}
 \end{bibdiv}

\end{document}